\documentclass{amsart}

\usepackage{
 amsmath, 
 amsxtra, 
 amsthm, 
 amssymb, 
 etex, 
 mathrsfs, 
 mathtools, 
 tikz-cd, 
 xr,
 fullpage,
 comment}
\usepackage[all]{xy}
\usepackage{hyperref}

\newtheorem{theorem}{Theorem}[section]
\newtheorem{lemma}[theorem]{Lemma}
\newtheorem{conjecture}[theorem]{Conjecture}
\newtheorem{proposition}[theorem]{Proposition}
\newtheorem{corollary}[theorem]{Corollary}
\newtheorem{defn}[theorem]{Definition}

\newtheorem{lthm}{Theorem} 

\theoremstyle{remark}
\newtheorem{remark}[theorem]{Remark}

\setlength{\parskip}{.5\baselineskip}


\newcommand{\QQ}{\mathbb{Q}}
\newcommand{\Qp}{\mathbb{Q}_p}

\newcommand{\ZZ}{\mathbb{Z}}
\newcommand{\Zp}{\mathbb{Z}_p}


\newcommand{\DD}{\mathbb{D}}

\newcommand{\cY}{\mathcal{Y}}

\DeclareMathOperator{\Gal}{Gal}
\DeclareMathOperator{\Fil}{Fil}

\DeclareMathOperator{\Ext}{Ext} 

\DeclareMathOperator{\Sel}{Sel}
\DeclareMathOperator{\coker}{coker}
\DeclareMathOperator{\length}{length}
\DeclareMathOperator{\rank}{rank}

\newcommand{\ord}{\mathrm{ord}}

\newcommand{\bc}{\mathbf{c}}
\newcommand{\vp}{\varphi}
\newcommand{\Iw}{\mathrm{Iw}}
\newcommand{\HIw}{H^1_{\Iw}}
\newcommand{\col}{\mathrm{Col}}
\newcommand{\image}{\mathrm{Im}}
\newcommand{\loc}{\mathrm{loc}}

\newcommand{\LL}{\Lambda}
\newcommand{\lra}{\longrightarrow}
\newcommand{\res}{\textup{res}}

\newcommand{\pr}{\textup{pr}}
\newcommand{\cor}{\mathrm{cor}}
\newcommand{\Mlogv}{M_{\log,v}}

\newcommand{\Sss}{{\Sigma_\mathrm{ss}}}
\newcommand{\Sssp}{{\Sigma'_\mathrm{ss}}}
\newcommand{\Sord}{\Sigma_\mathrm{ord}}
\newcommand{\fs}{{\vec{s}}}
\newcommand{\Dcrisw}{\DD_{\mathrm{cris},w}(T)}
\newcommand{\Dcrisv}{\DD_{\mathrm{cris},v}(T)}
\newcommand{\Ep}{E[p^\infty]}
\newcommand{\Selp}{\Sel_{p^\infty}}

\newcommand{\colsv}{\mathrm{Col}_{\sharp,v}}
\newcommand{\colfv}{\mathrm{Col}_{\flat,v}}
\newcommand{\colsw}{\mathrm{Col}_{\sharp,w}}
\newcommand{\colfw}{\mathrm{Col}_{\flat,w}}
\newcommand{\colbw}{\mathrm{Col}_{\bullet,w}}
\newcommand{\hvnu}{h_{v,n}^{u}}

\newcommand{\colvnu}{\mathrm{Col}_{v,n}^{u}}
\newcommand{\colwnu}{\mathrm{Col}_{w,n}^{u}}

\newcommand{\cH}{\mathcal{H}}
\newcommand{\cL}{\mathcal{L}}
\newcommand{\cO}{\mathcal{O}}

\newcommand{\cX}{\mathcal{X}}
\newcommand{\wT}{\widehat{T}}
\newcommand{\tT}{\widetilde{T}}

  \DeclareFontFamily{U}{wncy}{}
  \DeclareFontShape{U}{wncy}{m}{n}{<->wncyr10}{}
  \DeclareSymbolFont{mcy}{U}{wncy}{m}{n}
  \DeclareMathSymbol{\sha}{\mathord}{mcy}{"58} 

\newcommand{\plim}{\displaystyle \mathop{\varprojlim}\limits}

\definecolor{Green}{rgb}{0.0, 0.5, 0.0}
\definecolor{blue}{rgb}{0.0, 0.0, 1.0}

\newcommand{\draftcolor}{Green}

\newcommand{\bgreen}{\begin{color}{\draftcolor}}
\newcommand{\egreen}{\end{color}}

\begin{document}
\title[Growth of Mordell-Weil ranks and sha with  mixed-reduction type]{Mordell-Weil ranks and Tate-Shafarevich groups of elliptic curves with mixed-reduction type over cyclotomic extensions}

\begin{abstract}
Let $E$ be an elliptic curve defined over a number field $K$ where $p$ splits completely. Suppose that $E$ has good reduction at all primes above $p$. Generalizing previous works of  Kobayashi and Sprung, we define multiply signed Selmer groups over the cyclotomic $\Zp$-extension of a finite extension $F$ of $K$ where  $p$ is unramified. Under the hypothesis that the Pontryagin duals of these Selmer groups are torsion over the corresponding Iwasawa algebra, we show that the Mordell-Weil ranks of $E$ over a subextension of the cyclotomic $\Zp$-extension are bounded. Furthermore, we derive an aysmptotic formula of the growth of the $p$-parts of the Tate-Shafarevich groups of $E$ over these extensions.
\end{abstract}

\author[A. Lei]{Antonio Lei}
\address[Lei]{D\'epartement de Math\'ematiques et de Statistique\\
Universit\'e Laval, Pavillion Alexandre-Vachon\\
1045 Avenue de la M\'edecine\\
Qu\'ebec, QC\\
Canada G1V 0A6}
\email{antonio.lei@mat.ulaval.ca}

\author[M.F.~Lim]{Meng Fai Lim}
\address[Lim]{School of Mathematics and Statistics \& Hubei Key Laboratory of Mathematical Sciences\\ Central China Normal University\\ Wuhan\\ 430079\\ P.R.China.}
\email{limmf@mail.ccnu.edu.cn}

\numberwithin{equation}{section}

\thanks{The authors' research is partially supported by:  the NSERC Discovery Grants Program RGPIN-2020-04259 and RGPAS-2020-00096 (Lei) and the National Natural Science Foundation of China under Grant No. 11550110172 and Grant No. 11771164 (Lim).}

\subjclass[2010]{11R23 (primary); 11F11, 11R20 (secondary) }
\keywords{Iwasawa theory, elliptic curves, Mordell-Weil groups, Tate-Shafarevich groups, mixed-reduction type}

\maketitle

\section{Introduction}
Let $p$ be a fixed odd prime number and  $K\subset F$  be number fields. Let $\Sigma_p'$ and $\Sigma_p$ be the sets of primes of $K$ and $F$ above $p$ respectively. Throughout this article, we fix an elliptic curve $E/K$ which has good reduction at all primes of $\Sigma_p'$. We decompose $\Sigma_p'$ and $\Sigma_p$ into the ordinary and supersingular primes for $E$, namely $\Sigma_p'=\Sssp\sqcup\Sord'$ and $\Sigma_p=\Sss\sqcup\Sord$. Without further notice, we assume throughout the article that the following hypotheses hold:
\begin{itemize}
    \item[(S1)] The prime $p$ splits completely in $K/\QQ$ and is unramified in $F/\QQ$;
    \item[(S2)] The set $\Sssp$ is non-empty.
    \end{itemize}

Denote by $K_\infty$ the cyclotomic $\Zp$-extension of $K$. For $n\ge0$, let $K_n$ denote the unique sub-extension of $K_\infty/K$ with $[K_n:K]=p^n$. Similarly, we write $F_\infty$ for the cyclotomic $\Zp$-extension of $F$ and $F_n$ for the unique sub-extension of $F_\infty/F$ with $[F_n:F]=p^n$. In view of assumption (S1), we have $K_{\infty}\cap F_n = K_n$ for every $n$.
For each $w\in \Sigma_p$,  the unique place of $F_\infty$ lying above $w$ will again be denoted by $w$. For each $w\in\Sss$, we define two local conditions $E^\sharp(F_{\infty,w})$ and $E^\flat(F_{\infty,w})$, generalizing works of Kobayashi \cite{kobayashi03} and Sprung \cite{sprung09}. This allows us to define $2^{|\Sss|}$ multiply signed Selmer groups $\Sel^{\fs}(E/F_\infty)$, one for each choice of $\fs=(s_w)_{w\in\Sss}$, where $s_w\in\{\sharp,\flat\}$. Our construction is carried out in \S\ref{S:signed}. Let $\Lambda$ denote the Iwasawa algebra $\Zp[[\Gal(F_\infty/F)]]$. In the main body of the article, we will introduce an additional hypothesis affirming that the Pontryagin duals of the multiply signed Selmer groups are $\Lambda$-torsion (labelled (S3)). We shall write $\mu_{\fs}$ and $\lambda_{\fs}$ for the $\mu$- and $\lambda$-invariants of $\Sel^{\fs}(E/F_\infty)^\vee$.

Our first result is a uniform  bound on the Mordell-Weil ranks of $E$ over $F_n$ as $n$ grows.

\begin{lthm} \label{thm:ranks} Under hypotheses (S1)-(S3),  $\rank_{\ZZ} E(F_n)$ is bounded independently of $n$.
\end{lthm}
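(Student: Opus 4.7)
The plan is to analyze $\rank_\ZZ E(F_n)$ via the Mordell--Weil exact sequence
$$ 0 \to E(F_n)\otimes \QQ_p/\ZZ_p \to \Selp(E/F_n) \to \sha(E/F_n)[p^\infty] \to 0,$$
from which $\rank_\ZZ E(F_n) = \mathrm{corank}_{\ZZ_p}\Selp(E/F_n) - \mathrm{corank}_{\ZZ_p}\sha(E/F_n)[p^\infty]$. Both terms on the right are expected to grow with $n$ because of supersingular contributions; the proof must show that the growing pieces cancel in the difference.

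First, for a fixed sign choice $\fs$, I would prove a control theorem: the restriction map $\Sel^{\fs}(E/F_n) \to \Sel^{\fs}(E/F_\infty)^{\Gamma_n}$ has kernel and cokernel of $\ZZ_p$-corank uniformly bounded in $n$, by the standard inflation--restriction argument applied to the signed local conditions at each $w\in\Sss$. Hypothesis (S3) makes the target the $\Gamma_n$-invariants of a finitely generated $\LL$-torsion module, and for any such module the $\ZZ_p$-corank of the $\Gamma_n$-invariants is bounded in $n$ (since a polynomial has only finitely many roots of the form $\zeta - 1$ with $\zeta$ a $p$-power root of unity). Hence $\mathrm{corank}_{\ZZ_p} \Sel^{\fs}(E/F_n) = O(1)$.

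Next, the Poitou--Tate four-term sequence comparing the classical and signed local conditions at $\Sss$ reads
$$ 0 \to \Sel^{\fs}(E/F_n) \to \Selp(E/F_n) \to \bigoplus_{w\in\Sss} \frac{E(F_{n,w})\otimes\QQ_p/\ZZ_p}{E^{s_w}(F_{n,w})} \to \left(\Sel^{\fs^*}_{\textup{str}}(E/F_n)\right)^\vee,$$
where $\fs^*$ is the opposite sign choice and $\Sel^{\fs^*}_{\textup{str}}$ is the corresponding strict Selmer group. The rightmost term has bounded corank by another application of (S3) (for the opposite sign choice), so
$$ \mathrm{corank}_{\ZZ_p}\Selp(E/F_n) = \sum_{w\in\Sss} d_{n,w}^{\fs} + O(1),$$
where $d_{n,w}^{\fs}$ is the $\ZZ_p$-corank of the local quotient. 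Running the dual argument on $\sha(E/F_n)[p^\infty]$---and invoking the orthogonality of $E^{s_w}$ and $E^{s_w^*}$ under local Tate duality, which guarantees that the local corank $d_{n,w}^{\fs}$ appears on both sides---yields the matching formula $\mathrm{corank}_{\ZZ_p}\sha(E/F_n)[p^\infty] = \sum_{w\in\Sss} d_{n,w}^{\fs} + O(1)$. Subtracting gives the uniform bound.

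I expect the main obstacle to be the dual computation: verifying that the local Tate pairing identifies $E^{s_w}(F_{n,w})$ with the annihilator of $E^{s_w^*}(F_{n,w})$, and that this orthogonality propagates through the Poitou--Tate machinery to produce the matching local term on the $\sha$-side. Much of this orthogonality is built into the Kobayashi--Sprung construction of the signed local conditions, but its extension to the present mixed-reduction setting over the unramified extension $F/K$ will require an explicit local check at each supersingular $w$.
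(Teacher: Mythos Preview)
Your approach has a genuine gap, and in fact the underlying strategy is misdirected.

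The main error is in the Poitou--Tate step. In the five-term comparison sequence
\[
0 \to \Sel^{\fs}(E/F_n) \to \Selp(E/F_n) \to \bigoplus_{w\in\Sss} \frac{E(F_{n,w})\otimes\Qp/\Zp}{E^{s_w}(F_{n,w})} \to H^1_{\mathcal{F}^*}(F_n,T)^\vee \to H^1_f(F_n,T)^\vee \to 0,
\]
the fourth term is the Pontryagin dual of a \emph{compact} Selmer group for $T$, not of a discrete signed Selmer group for $E[p^\infty]$. Hypothesis (S3), even combined with a control theorem, bounds the corank of the discrete group $\Sel^{\fs}(E/F_n)$; it says nothing directly about $\rank_{\Zp} H^1_{\mathcal{F}^*}(F_n,T)$. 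In fact the Greenberg--Wiles Euler-characteristic formula forces
\[
\rank_{\Zp} H^1_{\mathcal{F}^*}(F_n,T) \;=\; \mathrm{corank}_{\Zp}\Sel^{\fs}(E/F_n) + \sum_{w\in\Sss} d_{n,w}^{\fs} \;=\; \sum_{w\in\Sss} d_{n,w}^{\fs} + O(1),
\]
since the signed local condition at $w$ has corank strictly smaller than the self-dual size $[F_{n,w}:\Qp]$. Thus the fourth term grows at exactly the same rate as the third, the alternating sum of ranks in the five-term sequence is a tautology, and your asserted formula $\mathrm{corank}_{\Zp}\Selp(E/F_n)=\sum d_{n,w}^{\fs}+O(1)$ does not follow. (Incidentally, in Kobayashi's setting the signed local condition is \emph{self}-orthogonal, not orthogonal to the opposite sign, so the identification of $\mathcal{F}^*$ with $\fs^*$ is also incorrect.)

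The second gap is that the ``dual argument on $\sha$'' is not an argument at all: there is no Poitou--Tate sequence with $\sha(E/F_n)[p^\infty]$ as a term, and no mechanism is offered for producing the matching local contribution on the $\sha$-side. Without it the subtraction cannot be carried out.

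The paper avoids these issues entirely by proving something stronger: $\mathrm{corank}_{\Zp}\Selp(E/F_n)$ is itself bounded, so no cancellation is needed. Via Poitou--Tate one obtains a short exact sequence $0\to\cY(E/F_n)\to\cX(E/F_n)\to\cX^0(E/F_n)\to 0$, where $\cX^0$ is the dual fine Selmer group (controlled and $\Lambda$-cotorsion, hence of bounded rank) and $\cY(E/F_n)$ is the cokernel of the localization map $H^1(G_\Sigma(F_n),T)\to\bigoplus_{w\mid p}H^1_{/f}(F_{n,w},T)$. The substance of the proof is showing, via Coleman maps and the Kobayashi-rank machinery $\nabla$, that $\nabla\cY(E/F_n)$ is defined for $n\gg 0$; this forces $\rank_{\Zp}\cY(E/F_n)$ to stabilize, whence $\rank_{\Zp}\cX(E/F_n)$ is bounded and the theorem follows immediately from $E(F_n)\otimes\Qp/\Zp\hookrightarrow\Selp(E/F_n)$.
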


When $F/\QQ$ is an abelian extension, one may obtain this result using Kato's Euler system from \cite{kato04} (together with the non-vanishing of the $L$-values of $E$ proved by Rohrlich \cite{rohrlich88}). Our method does not assume the existence of an Euler system and relies on the cotorsionness of the multiply signed Selmer groups instead. See also \cite[Corollary~10.2]{kobayashi03}, \cite[Theorem~3.4]{LP19}, \cite[Proposition~5.4]{LLZ17} and \cite[Theorem~1.1]{LS19} for similar results.

Given a finite $p$-group $M$, we write $e(M)$ for the integer given by $|M|=p^{e(M)}$.  The second result of our article is about the growth of $p$-parts of  Tate-Shafarevich groups of $E$ over $F_n$ (that is, $e\left(\sha_p(E/F_n)\right)$), as $n$ grows.

\begin{lthm} \label{thm:sha}Suppose that the hypotheses (S1)-(S3) hold. Furthermore, suppose that $\sha_p(E/F_n)$ is finite for all $n$. Then, there exist a choice of $\vec{\sigma}$ and $\vec{\tau}$ in $\{\sharp,\flat\}^\Sss$ such that
\[e\left(\sha_p(E/F_n)\right)-e\left(\sha_p(E/F_{n-1})\right)=
\begin{cases}
    S(\vec{\sigma},n)+\phi(p^n)\mu_{\vec{\sigma}}+\lambda_{\vec{\sigma}}-r_\infty&\text{if $n$ is odd,}\\
    T(\vec{\tau},n)+\phi(p^n)\mu_{\vec{\tau}}+\lambda_{\vec{\tau}}-r_\infty&\text{if $n$ is even.}
    \end{cases}
\]
for all $n\gg0$, where $r_\infty=\lim_{n\rightarrow\infty} \rank_{\ZZ}E(F_n)$, $S(\vec{\sigma},n)$ and $T(\vec{\tau},n)$ are certain linear combinations of $p^i$, $i\le n$, which we define explicitly in Proposition~\ref{prop:Y''} and $\phi$ is the Euler totient function.
\end{lthm}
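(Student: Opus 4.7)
The plan is to reduce the asymptotic of $e(\sha_p(E/F_n))$ to the Iwasawa-theoretic growth of signed Selmer groups. Under the finiteness of $\sha_p(E/F_n)$, the Kummer sequence
\[0\to E(F_n)\otimes\QQ_p/\Zp\to\Selp(E/F_n)\to\sha_p(E/F_n)\to 0\]
identifies $e(\sha_p(E/F_n))$ with the $\Zp$-length of the torsion subgroup of $\Selp(E/F_n)^\vee$, whose $\Zp$-rank equals $r_n=\rank_{\ZZ} E(F_n)$. By Theorem~\ref{thm:ranks}, $r_n=r_\infty$ for $n\gg 0$, so the task is to control the torsion part of $\Selp(E/F_n)^\vee$ asymptotically.

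For this, I would compare $\Selp(E/F_n)$ with $\Sel^{\vec{s}}(E/F_n)$ for $\vec{s}\in\{\sharp,\flat\}^{\Sss}$. At each $w\in\Sss$, the $s_w$-local condition and the Bloch--Kato local condition differ by a module controlled by the Coleman map $\col_{s_w,w}$, whose cokernel over $F_n$ has $p$-length equal to an explicit linear combination of $p^i$, $i\le n$. A Poitou--Tate diagram chase then yields an exact sequence linking $\Sel^{\vec{s}}(E/F_n)^\vee$ with $\Selp(E/F_n)^\vee$ up to explicit local correction terms and terms of bounded order. Setting $X_{\vec{s}}=\Sel^{\vec{s}}(E/F_\infty)^\vee$, which is $\Lambda$-torsion by (S3), and applying a control theorem (proved by the same comparison, now between $F_n$ and $F_\infty$), one obtains a pseudo-isomorphism $\Sel^{\vec{s}}(E/F_n)^\vee\sim X_{\vec{s}}/\omega_n X_{\vec{s}}$ with bounded error. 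The standard $\Lambda$-module structure formula then gives
\[e\bigl(X_{\vec{s}}/\omega_n X_{\vec{s}}\bigr)=\mu_{\vec{s}}\,p^n+\lambda_{\vec{s}}\,n+O(1)\qquad(n\gg 0),\]
and differencing at $n$ and $n-1$ produces the main term $\phi(p^n)\mu_{\vec{s}}+\lambda_{\vec{s}}$. The $-r_\infty$ correction in the theorem comes from the free-rank contribution $r_\infty\cdot n-r_\infty\cdot(n-1)=r_\infty$ that is removed when passing from the full $\Zp$-length of $\Selp(E/F_n)^\vee$ to its torsion subgroup.

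The explicit functions $S(\vec{\sigma},n),T(\vec{\tau},n)$ and the odd/even dichotomy arise from the local comparison step. At each $w\in\Sss$ the cokernels of $\col_{\sharp,w}$ and $\col_{\flat,w}$ grow at alternating rates depending on the parity of $n$---in analogy with Kobayashi's $q_n^\pm$ polynomials~\cite{kobayashi03}---so the sign choice that yields the correct comparison error depends on whether $n$ is odd or even. One thus selects $\vec{\sigma}$ (for odd $n$) and $\vec{\tau}$ (for even $n$) so that the local correction terms combine into the functions defined in Proposition~\ref{prop:Y''}. The main obstacle is carrying out this parity-dependent bookkeeping uniformly at every $w\in\Sss$ simultaneously, and ensuring that the kernels and cokernels of the control-theorem pseudo-isomorphism are bounded independently of $n$ so they vanish upon differencing; the rest of the argument is a direct application of the $\Lambda$-module toolkit outlined above.
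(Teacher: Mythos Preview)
Your outline is in the right spirit but takes a different route from the paper, and one step hides a genuine difficulty. You propose to define signed Selmer groups $\Sel^{\vec{s}}(E/F_n)$ at each finite level, compare them to $\Selp(E/F_n)$ by local correction terms, and then invoke a control theorem $\Sel^{\vec{s}}(E/F_n)^\vee\sim (X_{\vec{s}})_{\Gamma_n}$. The paper does neither of these things: signed local conditions are only defined at $F_\infty$, and no finite-level control theorem for $\sharp/\flat$ Selmer groups is proved. In the general $a_v\ne 0$ setting such a control theorem is not free; the $\sharp/\flat$ conditions at level $n$ are not given by explicit subgroups in the way Kobayashi's $\pm$ conditions are, and making your ``proved by the same comparison'' step precise would require the very local analysis (the matrices $H_{v,n}$ and the maps $\col_{w,n}^u$ of \S\ref{S:local}) that the paper carries out for a different purpose.

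The paper instead works entirely with the classical Selmer group at finite level and decomposes its dual via Poitou--Tate as
\[
0\to \cY(E/F_n)\to \cX(E/F_n)\to \cX^0(E/F_n)\to 0,
\]
where $\cX^0$ is the fine Selmer dual (for which an easy control theorem holds, Lemma~\ref{lem:fineSelmercontrol}) and $\cY(E/F_n)$ is the cokernel of the global-to-local map into $\bigoplus_w H^1_{/f}(F_{n,w},T)$. One then shows $\nabla\cY(E/F_n)=\nabla\cY'(E/F_n)$ (Proposition~\ref{prop:equivY}), where $\cY'$ replaces $H^1(G_\Sigma(F_n),T)$ by $\HIw(F_\infty,T)_{\Gamma_n}$, and computes $\nabla\cY'(E/F_n)$ purely locally via Coleman maps (Corollary~\ref{cor:nablaY'}); this is where $S(\vec\sigma,n)$ and $T(\vec\tau,n)$ arise, from evaluating $H_{v,n}$ at $\epsilon_n$ (Proposition~\ref{prop:evaluateH}). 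Finally, Poitou--Tate at $F_\infty$ gives an exact sequence $0\to\coker(\col_{\vec s}\circ\loc_p)\to X_{\vec s}\to\cX^0(E/F_\infty)\to 0$, and taking $\nabla(-)_{\Gamma_n}$ of this matches $\nabla\cY'+\nabla\cX^0$ with $\phi(p^n)\mu_{\vec s}+\lambda_{\vec s}$. Your $-r_\infty$ step and the use of the Kummer sequence are correct and match the paper; what differs is that the paper routes the signed-vs-classical comparison through the fine Selmer group and the cokernel $\cY$, thereby avoiding any need for finite-level signed Selmer groups or a signed control theorem.
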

This generalizes results of Kurihara, Kobayashi and Pollack for elliptic curves defined over $\QQ$ with $a_p=0$ (see \cite[Theorem~0.1]{kurihara02}, \cite[Theorem~1.4]{kobayashi03} and \cite[Theorem~1.1]{pollack05}) as well as Sprung's result for general $a_p$ (see \cite[Theorem~1.1]{sprung13}).

\begin{remark}
In the case where $a_v=0$ for all $v\in\Sssp$, the formula of Theorem~\ref{thm:sha} simplifies to 
\[
\begin{split}
    e\left(\sha_p(E/F_n)\right)&-e\left(\sha_p(E/F_{n-1})\right)=\\
&\begin{cases}
   \sum_{v\in\Sss}[F_v:\Qp](p^{n-1}-p^{n-2}+p^{n-3}-\cdots -p)+\phi(p^n)\mu_{\vec{\flat}}+\lambda_{\vec{\flat}}-r_\infty&\text{if $n$ is odd,}\\
   \sum_{v\in\Sss}[F_v:\Qp](p^{n-1}-p^{n-2}+p^{n-3}-\cdots -1)+\phi(p^n)\mu_{\vec{\sharp}}+\lambda_{\vec{\sharp}}-r_\infty&\text{if $n$ is even,}
    \end{cases}
\end{split}
\]
where $\vec{\star}$ denotes the constant vector $(\star)_{w\in\Sss}$ for $\star\in\{\sharp,\flat\}$. {In particular, the  vectors $\vec\sigma$ and $\vec\tau$ in the statement of Theorem~\ref{thm:sha} are given by $\vec\flat$ and $\vec\sharp$ respectively.}

If furthermore $\Sigma_p=\Sss$, the term $\sum_{v\in\Sss}[F_v:\Qp]$ becomes $[F:\QQ]$. In this case, under certain hypotheses on the vanishing of the Mordell-Weil ranks and the behaviour of $\sha_p(E/F)$, Iovita and Pollack \cite[Theorem~5.1]{iovitapollack06}  showed that the quantities $[F:\QQ](p^{n-1}-p^{n-2}+\cdots)$  describe precisely the growth of the Tate-Shafarevich groups of $E$ over finite extensions inside a $\Zp$-extension of $F$ (which is not necessarily cyclotomic).  It would be interesting to study whether our techniques can be extended to the setting of \cite{iovitapollack06}, which may allow us to relax some of the hypotheses in loc. cit. We plan to study this in the near future.
\end{remark}

The structure of the paper is as follows. In \S\ref{S:signed}, we review the local theory of Coleman maps and logarithmic matrices for elliptic curves with supersingular reduction at $w\in\Sss$. This allows us to define the multiply signed Selmer groups. Along the way, we prove a result on the image of the direct sum of two Coleman maps (Proposition~\ref{prop:image}), which may be of independent interest. After giving the definition of multiply signed Selmer groups, we prove a number of results on the structure of global cohomology groups under the hypothesis that these Selmer groups are cotorsion. In \S\ref{S:kob}, we first review the definition of Kobayashi ranks on projective systems of $\Zp$-modules. We then prove a number of preliminary results on Kobayashi ranks of certain modules that will be used later on in the article. In \S\ref{S:local}, we make the link between Coleman maps and Kobayashi ranks and explain how this allows us to study the growth of certain local modules. We treat the ordinary and supersingular cases separately. Our treatment  in the supersingular case follows closely \cite{sprung13,LLZ17}. Finally, we put everything together to prove Theorems~\ref{thm:ranks} and \ref{thm:sha} in \S\ref{sec:proofs}.

\subsection*{Acknowledgement} We would like to thank Antonio Cauchi, Daniel Delbourgo, Jeffrey Hatley, Chan-Ho Kim and Guhan Venkat for interesting discussions during the preparation of this article. Some part of the research of this article was conducted when Lim was visiting the National University of Singapore and the National Center for Theoretical Sciences in Taiwan, and he would like to acknowledge the hospitality and conducive working conditions provided by these institutes. Finally, we thank the anonymous referees for their very helpful comments, which have helped improve the presentation of the article.

\subsection{Notation}

Throughout this article, $T$ denotes the $p$-adic Tate module of $E$.

The Galois groups of $F_\infty/F$ and $F_\infty/F_n$ are denoted by $\Gamma$ and $\Gamma_n$ respectively. We fix once and for all a topological generator $\gamma$ of $\Gamma$. Recall from our earlier discussion that we write the unique prime of $F_\infty$ above  a prime $w\in \Sigma_p$ by $w$ as well. In particular, it follows that $F_{\infty,w}$ is the cyclotomic $\Zp$-extension of $F_w$ with $F_{n,w}$ as its intermediate subfields. Therefore, we may and will identify the Galois groups of $F_{\infty,w}/F_w$ and $F_{\infty,w}/F_{n,w}$ with $\Gamma$ and $\Gamma_n$ respectively. 
Let $\Lambda$ denote the Iwasawa algebra $\Zp[[\Gamma]]$, which we shall identify with the power series ring $\Zp[[X]]$ by sending $\gamma-1$ to $X$.

 We write $\cH$ for the ring of distribution algebra on $\Gamma$, which can be realized as the set of power series in $\Qp[[X]]$ that converge on the open unit disc.
Let  $G$ be an element in $\Lambda$ or $\cH$. We shall identify it with a power series in $\Qp[[X]]$ (which will again be denoted by $G$). Given  a character $\theta$ on $\Gamma$, we evaluate $G$ at $\theta$ via $G(\theta)=G(\theta(\gamma)-1)$.

Given a ring $\cO$ that contains $\Zp$, we shall write $\Lambda_\cO$ for the tensor product $\Lambda\otimes_{\Zp}\cO$. We may evaluate an element of $\Lambda_\cO$ at a character of $\Gamma$ as before.

For all integers $n\ge1$, we write  $\omega_n=(1+X)^{p^n}-1$ and $\Phi_n=\omega_n/\omega_{n-1}$ (with $\omega_0=X$). We let $\Lambda_n$ denote the quotient $\Lambda/(\omega_n)=\Lambda_{\Gamma_n}$. Furthermore, we fix a primitive $p^n$-th root of unity $\zeta_{p^n}$ and write $\epsilon_n=\zeta_{p^n}-1$. We shall also assume that the primitive roots of unity are chosen such that $\zeta_{p^{n+1}}^p=\zeta_{p^{n}}$.

\section{Multiply signed Selmer groups}\label{S:signed}

Throughout this section, we fix a prime $v\in\Sigma'_p$ and a prime $w\in\Sigma_p$ lying above $v$. By (S1), we may identify $K_v$ with $\Qp$ and $F_w$ with a finite unramified extension of $\Qp$. Let  $\cO_w$ denote the ring of integers of $F_w$.

\subsection{Coleman maps at supersingular primes}
In this subsection, we shall further assume that $w\in \Sss$. 
 We then write $\Dcrisv$ and $\Dcrisw$ for the Dieudonn\'e modules of $T|_{G_{K_v}}$ and $T|_{G_{F_w}}$ respectively. Recall that $\Dcrisv$ is a filtered $\Zp$-module of rank 2 and $\Dcrisv\otimes_{\Zp}\Qp$ is equipped with a linear operator $\vp$. Furthermore, $\Dcrisw=\cO_w\otimes_{\Zp}\Dcrisv$ and $\vp$ acts semi-linearly on $\Dcrisw\otimes_{\Zp}\Qp$. That is $\vp( x\otimes u)= x^{\sigma_w}\otimes\vp(u)$ for $x\in \cO_w$ and $u\in \Dcrisv\otimes_{\Zp}\Qp$, where $\sigma_w$ is the Frobenius of $F_w/F'_v$. Let $a_v=1+p-|\tilde E_v(k_v)|\in p\ZZ$, where $\tilde E_v$ is the reduced curve of $E$ modulo $v$ and $k_v$ is the residue field of $K_v$. The characteristic polynomial of $\vp$ on $\Dcrisv\otimes\Qp$ is given by $X^2-\frac{a_v}{p}X+\frac{1}{p}$. By the theory of Fontaine-Laffaille, $\Dcrisv$ admits a $\Zp$-basis of the form $\{\omega_v,\vp(\omega_v)\}$, where $\omega_v$ generates $\Fil^0\Dcrisv$. The matrix of $\vp$ with respect to this basis is of the form
 \[
 A_v:=\begin{pmatrix}0&\frac{-1}{p}\\1&\frac{a_v}{p} \end{pmatrix}=\begin{pmatrix}0&-1\\1&a_v\end{pmatrix}\begin{pmatrix}1&0\\0&\frac{1}{p}\end{pmatrix}.
 \]
 
 For $n\ge1$, let 
 \begin{equation}
     \label{eq:defnC} C_{v,n}=\begin{pmatrix}1&0\\0&\Phi_n\end{pmatrix}\begin{pmatrix}a_v&1\\-1&0\end{pmatrix}\quad\text{and}\quad M_{v,n}=A_v^{n+1}C_{v,n}\cdots C_{v,1}.
 \end{equation}
 
 By \cite[Lemma~4.4]{sprung17} (see also \cite[Theorem~1.5]{lei15}), the matrices $M_{v,n}$ converge to a $2\times2$ matrix over $\cH$ as $n\rightarrow\infty$. We then define
 \[
 \Mlogv:=\lim_{n\rightarrow \infty}M_{v,n}.
 \]
 
  Let $\HIw(K_{\infty,v},T)$ denote the inverse limit $\varprojlim H^1(K_{n,v},T)$, where the connecting maps are corestrictions. Let
  $ \cL_v:\HIw(K_{\infty,v},T)\rightarrow \cH\otimes \Dcrisv$ be the Perrin-Riou map as given by \cite[\S5.1]{BLLV} (originally defined in \cite{perrinriou94}). The matrix $\Mlogv$ allows us to factorize $\cL_v$ into
  \begin{equation}
  \cL_v=\begin{pmatrix}\omega_v&\vp(\omega_v)\end{pmatrix}\Mlogv\begin{pmatrix}\colsv\\\colfv\end{pmatrix},
      \label{eq:decomposePR}
  \end{equation}
  where $\colsv,\colfv:\HIw(K_{\infty,v},T)\rightarrow \Lambda$ are $\Lambda$-morphisms  as given in \cite[\S2.3]{BL19}. We would like to describe the images of the Coleman maps. As a start, we recall the following preliminary lemma due to Kobayashi (see \cite[proof of Proposition~8.23]{kobayashi03}).
  
  \begin{lemma}\label{lem:surjcor}
  The corestriction map $H^1(K_{m,v},T)\rightarrow H^1(K_{n,v},T)$ is surjective for all $m\ge n$.
  \end{lemma}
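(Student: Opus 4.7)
My plan is to reduce, via local Tate duality, to the vanishing of $E(K_{m,v})[p^\infty]$. The Weil pairing identifies $T$ with $\Hom(E[p^\infty],\mu_{p^\infty})$, so local duality produces a perfect Pontryagin pairing between $H^1(K_{n,v},T)$ and $H^1(K_{n,v},E[p^\infty])$. Under this pairing, the corestriction $\cor\colon H^1(K_{m,v},T)\to H^1(K_{n,v},T)$ is adjoint to the restriction map $\res\colon H^1(K_{n,v},E[p^\infty])\to H^1(K_{m,v},E[p^\infty])$, so the surjectivity asserted in the lemma is equivalent to the injectivity of $\res$.

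The five-term inflation--restriction sequence for $\Gal(K_{m,v}/K_{n,v})$ identifies $\ker(\res)$ with $H^1\bigl(\Gal(K_{m,v}/K_{n,v}),\, E(K_{m,v})[p^\infty]\bigr)$. It therefore suffices to prove that $E(K_{m,v})[p^\infty]=0$. Since $p$ splits completely in $K$, we have $K_v=\Qp$; since $p$ is odd, $K_{m,v}/\Qp$ is totally ramified of degree $p^m$ with residue field $\mathbb{F}_p$. The reduction sequence
\[0\to\hat{E}(\mathfrak{m}_{K_{m,v}})\to E(K_{m,v})\to\tilde{E}_v(\mathbb{F}_p)\to 0\]
reduces the vanishing to showing each outer term has trivial $p$-part.

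The reduced-curve part is immediate: $a_v\in p\ZZ$ forces $|\tilde{E}_v(\mathbb{F}_p)|=p+1-a_v$ to be coprime to $p$. For the formal group part, supersingularity means $\hat{E}$ has height $2$, so $G_{\Qp}$ acts irreducibly on $\hat{E}[p](\overline{\Qp})\cong\mathbb{F}_p^2$, and in particular $\hat{E}(\Qp)[p]=0$. Since $\Gal(K_{m,v}/\Qp)$ is a $p$-group acting on the $\mathbb{F}_p$-vector space $\hat{E}(K_{m,v})[p]$, the standard fact that a $p$-group action on a nonzero $\mathbb{F}_p$-vector space has nonzero fixed subspace, combined with $\hat{E}(\Qp)[p]=0$, forces $\hat{E}(K_{m,v})[p]=0$, whence $\hat{E}(K_{m,v})[p^\infty]=0$ by devissage.

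The main subtle step is the irreducibility of the $G_{\Qp}$-action on $\hat{E}[p]$; this is the classical consequence of the formal group having height $2$ in the supersingular case (alternatively, the field of definition of a nontrivial $p$-torsion point is a ramified extension of degree dividing $p^2-1$, coprime to $[K_{m,v}:\Qp]=p^m$). Once this input is in hand, the argument is formal and closely follows Kobayashi's proof of \cite[Proposition~8.23]{kobayashi03}.
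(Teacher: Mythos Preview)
Your proof is correct and follows essentially the same approach as the paper: both reduce via local Tate duality to the injectivity of restriction on $H^1(-,E[p^\infty])$, which in turn follows from the vanishing of $E(K_{m,v})[p^\infty]$. The only difference is that the paper outsources this vanishing to \cite[Proposition~8.7]{kobayashi03} (or \cite[Proposition~3.1]{KitaOts}), whereas you give a self-contained proof via the reduction sequence and the height-2 formal group argument.
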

  \begin{proof}
  It is well-known that in this supersingular setting, one has $H^0(K_{m,v},E[p^\infty])=0$ (cf. \cite[Proposition 8.7]{kobayashi03} or \cite[Proposition 3.1]{KitaOts}). From this fact, we then see that the restriction map $$H^1(K_{n,v},E[p^\infty])\rightarrow H^1(K_{m,v},E[p^\infty])$$ is injective. The required conclusion now follows from this and the local Tate duality.
  \end{proof}
  
  \begin{proposition}\label{prop:image}
  Let $I_v:=\{(G_1,G_2)\in\Lambda^{\oplus2}:(p-1)G_1(0)=(2-a_v)G_2(0)\}$. Then
  \[
  \image(\colsv\oplus\colfv)= I_v.
  \]
  \end{proposition}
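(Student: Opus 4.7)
My plan is to establish the two inclusions separately, starting with the easier containment.

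For $\image(\colsv\oplus\colfv)\subseteq I_v$, I would specialize the factorization \eqref{eq:decomposePR} at the trivial character $X=0$ and compare with Perrin-Riou's interpolation formula at that character. The preliminary computation is $\Mlogv(0)=A_v$: since $\Phi_k(0)=p$ for every $k\geq 1$ (observe $\Phi_k$ is the $p$-th cyclotomic polynomial in $Y=(1+X)^{p^{k-1}}$ evaluated at $Y=1$), each $C_{v,k}(0)$ collapses to $\begin{pmatrix}a_v & 1\\-p & 0\end{pmatrix}$, which a direct multiplication shows is $A_v^{-1}$. Consequently $M_{v,n}(0)=A_v^{n+1}(A_v^{-1})^n$ telescopes to $A_v$ for every $n$, hence $\Mlogv(0)=A_v$. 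Substituting into \eqref{eq:decomposePR} expresses $\cL_v(z)(0)$ as an explicit $\Zp$-linear combination of $\omega_v$ and $\vp\omega_v$ whose coefficients involve $\colsv(z)(0)$ and $\colfv(z)(0)$. On the other hand, Perrin-Riou's formula at the trivial character reads $\cL_v(z)(0)=(1-\vp)(1-p^{-1}\vp^{-1})^{-1}\exp^*(z_0)$, where $z_0$ is the image of $z$ in $H^1(K_v,T)$ and $\exp^*(z_0)\in\Fil^0\Dcrisv=\Zp\omega_v$. A direct calculation in the basis $\{\omega_v,\vp\omega_v\}$ gives $(1-\vp)(1-p^{-1}\vp^{-1})^{-1}\omega_v=\tfrac{p-1}{p+1-a_v}\omega_v-\tfrac{2p-a_v}{p+1-a_v}\vp\omega_v$. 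Equating the two expressions for $\cL_v(z)(0)$ and eliminating the scalar coming from $\exp^*(z_0)$ forces the identity $(p-1)\colsv(z)(0)=(2-a_v)\colfv(z)(0)$.

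For $I_v\subseteq\image(\colsv\oplus\colfv)$, I would argue by comparison of $\Zp$-ranks modulo $\omega_n$ at each finite level. By Lemma~\ref{lem:surjcor} together with the vanishing $H^0(K_{n,v},\Ep)=0$, the natural projection $\HIw(K_{\infty,v},T)/\omega_n\HIw(K_{\infty,v},T)\to H^1(K_{n,v},T)$ is an isomorphism, so the source is free of $\Zp$-rank $2p^n$. Using that $p\mid a_v$ in the supersingular setting and that $p-1$ is a unit in $\Zp$, a short unpacking of the defining equation yields $I_v=\Lambda\cdot(2-a_v,p-1)+X\Lambda^{\oplus 2}$, from which one computes that $I_v/(I_v\cap\omega_n\Lambda^{\oplus 2})$ also has $\Zp$-rank $2p^n$. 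Combined with the first inclusion, the image of $\colsv\oplus\colfv$ at level $n$ sits inside $I_v/(I_v\cap\omega_n\Lambda^{\oplus 2})$ as a submodule of the same $\Zp$-rank. A discriminant comparison using the explicit form of $\det(\Mlogv)\in\cH$ and its values at cyclotomic characters then rules out a nontrivial finite cokernel, and passing to the inverse limit yields the desired equality.

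The main obstacle I anticipate is the final discriminant step in the second inclusion — namely, ruling out spurious finite cokernel at each level $n$. This requires coordinating the specialization behavior of $\det(\Mlogv)$ at every character of $\Gamma$ with the index data, closely paralleling the finite-level arguments in Kobayashi \cite[\S8]{kobayashi03} and Sprung \cite{sprung09}. A cleaner alternative would be to exhibit explicit classes in $\HIw(K_{\infty,v},T)$ whose images under $\colsv\oplus\colfv$ generate $I_v$ as a $\Lambda$-module, but this seems difficult to arrange without already understanding the image.
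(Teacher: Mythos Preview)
Your first inclusion is correct and essentially coincides with the paper's derivation of the relation $(p-1)\colsv(z)(0)=(2-a_v)\colfv(z)(0)$; your explicit computation $\Mlogv(0)=A_v$ is a clean way to package what the paper obtains via the matrix identity for $\exp^*$ in terms of the Coleman maps.

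The second inclusion, however, has a genuine gap precisely where you flag it. Your rank comparison at level $n$ recovers only that the image has finite index in $I_v$ --- a fact the paper simply cites from \cite{leiloefflerzerbes11} --- and the ``discriminant comparison'' you propose to kill the finite cokernel is not a well-defined step: $\det(\Mlogv)$ controls the image of $\cL_v$ inside $\cH\otimes\Dcrisv$, not the index of $\image(\colsv\oplus\colfv)$ inside $I_v$, so there is no obvious quantity to match. Working at all levels simultaneously is also unnecessary.

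The paper avoids this obstacle by a Nakayama argument you have overlooked. Once one knows $\image(\colsv\oplus\colfv)\subset I_v$ with finite index, the quotient is a finite $\Lambda$-module, so it suffices to check that the inclusion becomes an equality after reduction modulo $X$ (Lemma~\ref{lem:surjcor} being used to identify the source mod $X$ with $H^1(K_v,T)$). At level $0$ the relation you already derived forces the image to lie in the rank-one $\Zp$-module $\{(g_1,g_2):(p-1)g_1=(2-a_v)g_2\}$; what remains is to see that the image is \emph{all} of this module rather than a proper sublattice. The paper settles this by showing, via Berger's description of the Bloch--Kato exponential, that $\exp^*:H^1_{/f}(K_v,T)\xrightarrow{\sim}\tfrac{1}{p}\Zp\,\omega_v$, and that the coefficient relating $\exp^*$ to $\colsv$ modulo $X$ lies in $\tfrac{1}{p}\Zp^\times$. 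This forces $\image(\colsv)_\Gamma=\Zp$ exactly, and combined with the linear relation gives \eqref{eq:image}. This single level-$0$ computation replaces your proposed tower of discriminant estimates entirely.
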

  \begin{proof}
  By \cite[Corollary~5.3 and Theorem~5.10]{leiloefflerzerbes11}, we have the inclusion
  \[
  \image(\colsv\oplus\colfv)\subset I_v,
  \]
  with finite index.  By Nakayama's Lemma and the surjectivity of the corestriction maps as given by Lemma~\ref{lem:surjcor}, it is enough to show that 
  \begin{equation}\label{eq:image}
      \image(\colsv\oplus\colfv)\mod X=\{(g_1,g_2)\in\Zp^{\oplus2}:(p-1)g_1=(2-a_v)g_2\}.
  \end{equation}
  
  Let us recall that 
  \begin{equation}\label{eq:PRmod}
        \cL_v\equiv (1-\vp)(1-p^{-1}\vp^{-1})^{-1}\exp^*\mod X
  \end{equation}
  (see \cite[Theorem~B.5]{loefflerzerbes11} for example).
   Thus, on combining \eqref{eq:decomposePR} and \eqref{eq:PRmod}, we have
  \[
  \exp^*\equiv\frac{1}{1+p-a_v}\begin{pmatrix}
  \omega_v& \vp(\omega_v)
  \end{pmatrix}\begin{pmatrix}
  \frac{a_v-2p}{p}&\frac{1-p}{p}\\
  p-1&a_v-2
  \end{pmatrix} \begin{pmatrix}
  \colsv\\ \colfv
  \end{pmatrix}\mod X
  \] 
  (see \cite[proof of Proposition 2.12]{hatley}).
  This in turn implies that
  \begin{equation}\label{eq:relation}
      (p-1)\colsv=(2-a_v)\colfv\mod X
  \end{equation}
   and
  \begin{equation}\label{eq:colexp}
       \exp^*=\omega_v\frac{(2-a_v)(a_v-2p)-(p-1)^2}{p(p-1)(1+p-a_v)}\colsv,
  \end{equation}
  where we note that  $\frac{(2-a_v)(a_v-2p)-(p-1)^2}{p(p-1)(1+p-a_v)}\in \frac{1}{p}\Zp^\times$.
  
 {It follows from \cite[Theorem~4.1(iii)]{blochkato} that 
   \[
  \left[\exp\left(\Dcrisv/\Fil^0\Dcrisv\right) :H^1_f(K_v,T)\right]=|\det(1-\vp)|_p=p,
  \]
  where $\exp$ is the Bloch-Kato exponential map and $|\cdot|_p$ is the $p$-adic norm normalized by $|p|_p=p^{-1}$.
  It then follows that the dual exponential map gives an isomorphism
  \begin{equation}\label{eq:dualexp}
        \exp^*:H^1_{/f}(K_v,T)\stackrel{\sim}{\longrightarrow}\frac{1}{p}\Zp\omega_v.
  \end{equation}}
  Combining \eqref{eq:colexp} and \eqref{eq:dualexp} yields $$\image(\colfv)_\Gamma=\Zp.$$
  This,  together with \eqref{eq:relation} allow us to deduce \eqref{eq:image}.
  \end{proof}

  After tensoring by $\cO_w$, we have similarly
   \[
  \cL_w=\begin{pmatrix}\omega_v&\vp(\omega_v)\end{pmatrix}\Mlogv\begin{pmatrix}\colsw\\\colfw\end{pmatrix},
  \]
  where $\cL_w$ is the Perrin-Riou map from $\HIw(F_{\infty,w},T)=\HIw(K_{\infty,v},T)\otimes\cO_w$ to $\cH\otimes\Dcrisw$ and $\colsw$ and $\colfw$ are defined by extending $\colsv$ and $\colfv$ $\cO_w$-linearly.

 \begin{remark}
  We have automatically $ \image(\colsw\oplus\colfw)=I_v\otimes\cO_w$.
  \end{remark}

\subsection{Selmer groups}

Let $w\in\Sss$. Consider the local Tate pairing
\[
\HIw(F_{\infty,w},T)\times H^1(F_{\infty,w},\Ep)\rightarrow \Qp/\Zp.
\]
For $\bullet\in\{\sharp,\flat\}$, we define $H^1_\bullet(F_{\infty,w},\Ep)\subset  H^1(F_{\infty,w},\Ep)$ to be the orthogonal complement of  $\ker\colbw$ under the local Tate pairing. We shall write
\[
H^1_{/\bullet}(F_{\infty,w},\Ep)=\frac{H^1(F_{\infty,w},\Ep)}{H^1_\bullet(F_{\infty,w},\Ep)}.
\]
Similarly, if $v\in\Sigma_p$, we write 
\[
H^1_{/f}(F_{\infty,v},\Ep)=\frac{H^1(F_{\infty,v},\Ep)}{E(F_\infty,w)\otimes\Qp/\Zp}, 
\]
where $E(F_\infty,w)\otimes\Qp/\Zp$ is identified with its image inside $H^1(F_{\infty,w},\Ep)$ under the Kummer map.

Let $\fs=(s_w)_{w\in\Sss}\in\{\flat,\sharp\}^\Sss$, we define the signed Selmer group of $E$ over $F_\infty$ by
\begin{align*}
    \Sel^{\fs}(E/F_\infty):=\ker\Big(H^1(F_\infty,\Ep)\rightarrow&\\ \prod_{w\in\Sss}H^1_{/s_w}(F_{\infty,w},\Ep)\times&\prod_{w\in\Sord}H^1_{/f}(F_{\infty,w},\Ep)\times \prod_w H^1(F_{\infty,w},\Ep)  \Big),
\end{align*}
where the last product runs through all primes of $F_\infty$ not dividing $p$. Equivalently, if $\Selp(E/F_\infty)$ denotes the classical $p^\infty$-Selmer group, then
\[
\Sel^{\fs}(E/F_\infty)=\ker\Big(\Selp(E/F_\infty)\rightarrow \prod_{w\in\Sss}H^1_{/s_w}(F_{\infty,w},\Ep)\Big).
\]
It is well-known that $\Selp(E/F_\infty)$ is cofinitely generated over $\Lambda$ (cf. \cite[Theorem 4.5]{Manin}). Thus, so is $\Sel^{\fs}(E/F_\infty)$.

\begin{conjecture}\label{conj:tor}
For all choices of $\fs$, the Selmer group $\Sel^{\fs}(E/F_\infty)$ is cotorsion over $\Lambda$.
\end{conjecture}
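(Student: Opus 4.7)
The plan is to deduce the cotorsionness from Kato's Euler system, adapted to the multiply-signed setting via the Coleman map decomposition~\eqref{eq:decomposePR} and Proposition~\ref{prop:image}. At least when $F/\QQ$ is abelian, Kato's zeta element from \cite{kato04} produces, for each $v\in\Sigma'_p$, a canonical class $\bz_v\in\HIw(K_{\infty,v},T)$ arising from the localization of the global Kato class. Applying \eqref{eq:decomposePR} to the base-changed element $\bz_w\in\HIw(F_{\infty,w},T)$ at each $w\in\Sss$, one extracts components
\[
L^{s_w}_w:=\col_{s_w,w}(\bz_w)\in\Lambda_{\cO_w},\qquad s_w\in\{\sharp,\flat\}.
\]
Combining these supersingular contributions with the usual ordinary $p$-adic $L$-function components at $w\in\Sord$, for each $\fs=(s_w)_{w\in\Sss}$ one obtains a signed $p$-adic $L$-function $L^{\fs}\in\Lambda_\cO$.

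The second step is to run Kato's Euler system machinery in the signed context. The definition of $H^1_\bullet(F_{\infty,w},\Ep)$ as the orthogonal complement of $\ker\colbw$ under local Tate duality, together with the Poitou-Tate nine-term exact sequence and the image computation of Proposition~\ref{prop:image}, yields a divisibility of characteristic ideals
\[
\Char_{\Lambda_\cO}\bigl(\Sel^{\fs}(E/F_\infty)^\vee\bigr)\;\supseteq\;(L^{\fs})
\]
up to an explicit error term controlled by the torsion of $\Lambda_\cO/\image(\colsw\oplus\colfw)$. It then suffices to verify that $L^{\fs}\neq 0$, and I would do this by specialization: via the interpolation formula~\eqref{eq:PRmod} for $\cL_v$, the value $L^{\fs}(\chi)$ at a finite-order character $\chi$ of $\Gamma$ equals a nonzero algebraic factor extracted from $\Mlogv$ times the classical $L$-value $L(E/F,\chi,1)$, and Rohrlich's non-vanishing theorem \cite{rohrlich88} ensures the latter is non-zero for all but finitely many $\chi$. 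Since $L^{\fs}$ vanishes at only finitely many characters, it is non-zero in $\Lambda_\cO$, yielding the required torsion statement.

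The main obstacle is two-fold. First, when $F/\QQ$ is not abelian, Kato's Euler system is not directly applicable; one would need to descend through an abelian subfield of $F$ (using corestriction together with Shapiro's lemma) or invoke a more general Euler system, taking care to preserve compatibility with the signed local conditions at all primes of $\Sss$ simultaneously. Second, and more delicate, the factorization~\eqref{eq:decomposePR} exists locally at each $v\in\Sssp$, so assembling the local signed components into a single global element in $\Lambda_\cO$ requires that the logarithmic matrix $\Mlogv$ does not introduce denominators destroying integrality; this in turn relies on the integrality results of \cite{leiloefflerzerbes11,BL19} combined with the corestriction surjectivity supplied by Lemma~\ref{lem:surjcor}. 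Finally, to recover the ``divisibility'' above with sharp error control, one must carefully track the torsion cokernel appearing in Proposition~\ref{prop:image}, which adds a bounded (and explicit) correction but does not affect the non-vanishing of the characteristic ideal.
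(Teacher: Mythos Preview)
The statement you are attempting to prove is labelled as a \emph{Conjecture} in the paper, not a theorem, and the paper does not offer any proof of it. Immediately after stating it, the authors introduce hypothesis~(S3), which simply \emph{assumes} Conjecture~\ref{conj:tor} holds, and all subsequent results (including Theorems~\ref{thm:ranks} and~\ref{thm:sha}) are conditional on (S3). The paper then remarks that the conjecture is known only in special cases: when $E$ is defined over $\QQ$ and $F/\QQ$ is abelian, via Kato's Euler system \cite{kato04} (in the ordinary case this is Mazur's conjecture; in the supersingular case over $\QQ$ it is due to Kobayashi \cite{kobayashi03}), with further partial progress in \cite{BL14}.

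Your proposal is therefore not a proof of the paper's statement but rather a sketch of how one might recover these already-known special cases. Two concrete gaps prevent it from being a proof of the full conjecture. First, Kato's construction \cite{kato04} is for elliptic curves defined over $\QQ$ (or, more generally, modular forms over $\QQ$), whereas here $E$ is defined over an arbitrary number field $K$ in which $p$ splits completely; there is no Euler system available in this generality, so your ``descend through an abelian subfield'' manoeuvre has no starting point unless $K=\QQ$. Second, even restricting to $K=\QQ$ and $F/\QQ$ abelian, the divisibility you assert between $\Char_{\Lambda_\cO}(\Sel^{\fs}(E/F_\infty)^\vee)$ and $(L^{\fs})$ in the multiply-signed mixed-reduction setting is itself a substantial theorem requiring a full Euler system argument adapted to the signed local conditions simultaneously at all $w\in\Sss$; this is not a routine consequence of Proposition~\ref{prop:image} and the Poitou--Tate sequence, and the obstacles you yourself flag (non-abelian $F$, integrality of the signed components, assembling local data into a single global element) are precisely why the statement remains a conjecture in the paper rather than a theorem.
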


For the rest of the article, we assume that the following hypothesis holds:

\begin{itemize}
    \item[(S3)] Conjecture~\ref{conj:tor} holds. 
\end{itemize}
\begin{defn}
We write  $\mu_\fs$ and $\lambda_\fs$ for the $\mu$- and $\lambda$-invariants of the torsion $\Lambda$-module $\Sel^{\fs}(E/F_\infty)^\vee$.
\end{defn}

When the elliptic curve $E$ has good ordinary reduction at all primes above $p$, the above conjecture is precisely Mazur's conjecture \cite{mazur72} which is known to be valid in the case when $E$ is defined over $\QQ$ and $F$ an abelian extension of $\QQ$ (see \cite{kato04}). For an elliptic curve over $\QQ$ with good supersingular reduction at $p$, this conjecture was established by Kobayashi
(cf.\ \cite{kobayashi03}; also see \cite{BL14} for some recent progress on this conjecture).

\subsection{Structures of global cohomologies}

In this section, we record certain consequences of Conjecture~\ref{conj:tor}, which will be utilized in subsequent sections of the paper. From now on, let $\Sigma$ denote a fixed finite set of primes of $F$ containing those above $p$, the ramified primes of $F/K$ and all the bad reduction primes of $E$. Write $F_{\Sigma}$ for the maximal algebraic extension of $F$ which is unramified outside $\Sigma$. For any (possibly infinite) extension $F\subseteq L\subseteq F_{\Sigma}$, write $G_{\Sigma}(L)= \Gal(F_{\Sigma}/L)$.
The signed Selmer group of $E$ over $F_\infty$ can then be equivalently defined by
\begin{align*}
    \Sel^{\fs}(E/F_\infty):=\ker\Big(H^1(G_{\Sigma}(F_\infty),\Ep)\rightarrow&\\ \prod_{w\in\Sss}H^1_{/s_w}(F_{\infty,w},\Ep)\times&\prod_{w\in\Sord}H^1_{/f}(F_{\infty,w},\Ep)\times \prod_{w\in\Sigma, w\nmid p} H^1(F_{\infty,w},\Ep)  \Big).
\end{align*}

We also define $H^i_{\Iw,\Sigma}(F_{\infty}, T)=\mathop{\varprojlim}\limits_n H^i(G_{\Sigma}(F_{n}), T)$, where the transition maps are given by the corestriction maps. {Note that $H^1_{\Iw,\Sigma}(F_{\infty}, T)$ is independent of the choice of $\Sigma$ (see \cite[Lemma~5.3.1]{MR} or \cite[Proposition 7.1]{kobayashi03}). Since our set $\Sigma$ is  fixed throughout, we will drop the subscript $\Sigma$ from the notation for simplicity and write $H^i_{\Iw}(F_{\infty}, T)$.} We now record the following useful observation.

\begin{lemma} \label{H1Iw} The group $\HIw(F_{\infty}, T)$ is a torsion-free $\Lambda$-module. In the event that $(S1)$ and $(S2)$ are valid, we even have that $H^1(G_{\Sigma}(F), T)$ is a torsion-free $\Zp$-module. 
\end{lemma}

\begin{proof}
By considering the low degree terms of the spectral sequence of Jannsen
\[ \Ext^i_{\Lambda}\big(H^j(G_{\Sigma}(F_{\infty}),\Ep)^{\vee},\Lambda\big) \Longrightarrow H_{\mathrm{Iw}}^{i+j}(F_{\infty}, T)\]
(cf. \cite[Theorem 1]{Jannsen}), we obtain the following exact sequence
\[ 0\lra \Ext^1_{\Lambda}((E(F_{\infty})[p^{\infty}])^{\vee},\Lambda\big) \lra \HIw(F_{\infty}, T) \lra \Ext^0_{\Lambda}\big(H^1(G_{\Sigma}(F_{\infty}),\Ep)^{\vee},\Lambda\big). \]
By a theorem of Imai \cite{Imai}, $E(F_{\infty})[p^{\infty}]$ is finite and so the leftmost term vanishes. This in turn implies that $\HIw(F_{\infty}, T)$ injects into an $\Ext^0$-term. Since the latter is a reflexive $\Lambda$-module by \cite[Corollary 5.1.3]{NSW}, $\HIw(F_{\infty}, T)$ must be torsionfree. 

We now prove the second assertion. The low degree terms of the spectral sequence 
\[ \Ext^i_{\Zp}\big(H^j(G_{\Sigma}(F),\Ep)^{\vee},\Zp\big) \Longrightarrow H^{i+j}(G_{\Sigma}(F), T)\] yields the following exact sequence
\[ 0\lra \Ext^1_{\Zp}((E(F)[p^{\infty}])^{\vee},\Zp\big) \lra H^{1}(G_{\Sigma}(F), T) \lra \Ext^0_{\Zp}\big(H^1(G_{\Sigma}(F_{\infty}),\Ep)^{\vee},\Zp\big). \]
Since $(S1)$ and $(S2)$ are valid, the proof of Lemma~\ref{lem:surjcor} tells us that  $E(F_{w})[p^{\infty}]=0$ for $w\in\Sss$. From which, one has $E(F)[p^{\infty}]=0$. Consequently, we have $H^{1}(G_{\Sigma}(F), T)$ injecting into an $\Ext^0$-term and so it must be $\Zp$-torsionfree.
\end{proof}

\begin{remark}
It is clear from the proof of Lemma \ref{H1Iw} that under the validity of $(S1)$ and $(S2)$, we can also show that $H^1(G_{\Sigma}(F_n), T)$ is a torsion-free $\Zp$-module for every $n$.
\end{remark}

\begin{proposition} \label{torsion H2}
Suppose that $(S1)$ and $(S2)$ are valid. Then $\Sel^{\fs}(E/F_\infty)$ is a cotorsion $\Lambda$-module if and only if we have that $H^2(G_{\Sigma}(F_{\infty}),\Ep)=0$ 
 and that the following  sequence 
 \begin{align*} 
 0\lra \Sel^{\fs}(E/F_\infty) \lra H^1(G_{\Sigma}(F_{\infty}),\Ep)\lra &\\ \prod_{w\in\Sss}H^1_{/s_w}(F_{\infty,w},\Ep)\times &\prod_{w\in\Sord}H^1_{/f}(F_{\infty,w},\Ep)\times \prod_{w\in\Sigma, w\nmid p} H^1(F_{\infty,w},\Ep) \lra 0.  
 \end{align*}
 is exact.
\end{proposition}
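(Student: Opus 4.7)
The plan is to deduce both directions of the stated equivalence simultaneously from a single five-term exact sequence arising from Poitou--Tate global duality in Iwasawa theory. Applied to the pair $(\Ep, T)$ over the cyclotomic tower $F_\infty$---using that $H^0(G_\Sigma(F_\infty),\Ep) = 0$ (a consequence of (S2), as observed in the proof of Lemma~\ref{H1Iw}) and that $H^2(F_{\infty,w},\Ep) = 0$ for all $w \in \Sigma$ (standard for cyclotomic $\Zp$-extensions, by cohomological dimension for $w \nmid p$ and by local weak Leopoldt for $w \mid p$)---together with imposition of the Selmer local conditions, one obtains the exact sequence
\[
0 \lra \Sel^{\fs}(E/F_\infty) \lra H^1(G_\Sigma(F_\infty),\Ep) \lra \cL \lra \cY \lra H^2(G_\Sigma(F_\infty),\Ep) \lra 0,
\]
where $\cL := \prod_{w\in\Sss}H^1_{/s_w}(F_{\infty,w},\Ep)\times\prod_{w\in\Sord}H^1_{/f}(F_{\infty,w},\Ep)\times \prod_{w\in\Sigma,\, w\nmid p} H^1(F_{\infty,w},\Ep)$, and $\cY$ is the Pontryagin dual of a strict Iwasawa Selmer submodule $Y \subseteq \HIw(F_\infty,T)$ cut out by the local conditions dual to those defining $\Sel^{\fs}$ under local Tate duality---namely $\ker(\col_{s_w,w})$ at $w \in \Sss$, the universal-norm submodule at $w \in \Sord$, and the zero submodule at $w \in \Sigma$, $w \nmid p$.

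The key structural input is that $Y$ is $\Lambda$-torsion-free, being a $\Lambda$-submodule of $\HIw(F_\infty,T)$, which is itself torsion-free by Lemma~\ref{H1Iw}. Hence $Y = 0$ if and only if $\rank_\Lambda Y = 0$.

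Next I would take alternating $\Lambda$-coranks in the five-term sequence. The global Euler--Poincar\'e identity $\mathrm{corank}_\Lambda H^1(G_\Sigma(F_\infty),\Ep) - \mathrm{corank}_\Lambda H^2(G_\Sigma(F_\infty),\Ep) = [F:\QQ]$ (obtained from Tate's formula at each finite layer $F_n$, using $\Ep^{G_{F_\infty}} = 0$), combined with the local computation $\mathrm{corank}_\Lambda \cL = [F:\QQ]$---where the supersingular contribution $\mathrm{corank}_\Lambda H^1_{/s_w}(F_{\infty,w},\Ep) = [F_w:\QQ_p]$ follows from Proposition~\ref{prop:image} (which pins down $\image(\col_{\bullet,w})$ and hence forces $\ker(\col_{\bullet,w})$ to have $\Lambda$-rank $[F_w:\QQ_p]$ inside the rank-$2[F_w:\QQ_p]$ module $\HIw(F_{\infty,w},T)$) together with local Tate duality; the ordinary and non-$p$ cases being classical---yields
\[
\mathrm{corank}_\Lambda \Sel^{\fs}(E/F_\infty) = \rank_\Lambda Y.
\]

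Putting everything together, $\Sel^{\fs}(E/F_\infty)$ is $\Lambda$-cotorsion if and only if $\rank_\Lambda Y = 0$, equivalently $Y = 0$ (by torsion-freeness), equivalently $\cY = 0$; and reading off the five-term sequence, $\cY = 0$ is in turn equivalent to the localization map being surjective together with $H^2(G_\Sigma(F_\infty),\Ep) = 0$. The chief technical obstacle I anticipate is the careful setting up of the five-term Poitou--Tate sequence at the supersingular primes---specifically, verifying via local Tate duality that $\ker(\col_{s_w,w})$ is the correct Tate-dual local condition to $H^1_{s_w}(F_{\infty,w},\Ep)$, which is essentially built into the definition in Section~\ref{S:signed}; the corank bookkeeping and the invocation of Proposition~\ref{prop:image} are routine in comparison.
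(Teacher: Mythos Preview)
Your proposal is correct and follows essentially the same approach as the paper: both set up the Poitou--Tate five-term exact sequence (the paper cites \cite[Proposition~A.3.2]{perrinriou95}), identify the fourth term as the dual of a $\Lambda$-submodule of $\HIw(F_\infty,T)$, use the global Euler--Poincar\'e and local corank identities to match coranks, and conclude via the torsion-freeness of $\HIw(F_\infty,T)$ from Lemma~\ref{H1Iw}. Your write-up is slightly more explicit about the dual local conditions and the corank computation at supersingular primes (invoking Proposition~\ref{prop:image}), whereas the paper cites these from the literature, but the logical skeleton is identical.
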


\begin{proof}
To simplify notation, we write $J_w(E/F_{\infty})$ for each of the local summands.
By \cite[Proposition~A.3.2]{perrinriou95}, we have an exact sequence
\[0\lra \Sel^{\overrightarrow{s}}(E/F_{\infty})\lra H^1(G_{\Sigma}(F_{\infty}),\Ep)\lra \prod_{w\in\Sigma}J_w(E/F_{\infty})  \]
\[ \lra \mathfrak{S}^{\overrightarrow{s}}(E/F_{\infty})^{\vee}\lra H^2(G_{\Sigma}(F_{\infty}),\Ep)\lra 0,\]
where $\mathfrak{S}^{\overrightarrow{s}}(E/F_{\infty})$ is a $\Lambda$-submodule of $\HIw(F_{\infty}/F, T)$. (For the precise definition of $\mathfrak{S}^{\overrightarrow{s}}(E/F_{\infty})$, we refer readers to loc. cit. For our purposes, the submodule theoretical information suffices.) Standard corank calculations \cite[Propositions 1-3]{G89} and \cite[Proposition 3.32]{KitaOts} tell us that
\[ \mathrm{corank}_{\Lambda}\big(H^1(G_{\Sigma}(F_{\infty}),\Ep)\big)- \mathrm{corank}_{\Lambda}\big(H^2(G_{\Sigma}(F_{\infty}),\Ep)\big) =[F: \QQ], \]
  and
\[\mathrm{corank}_{\Lambda}\left(\bigoplus_{w\in \Sigma}J_w(E/F_{\infty}) \right) = [F:\QQ].\]
It is now clear from these formulas and the above exact sequence that $\Sel^{\overrightarrow{s}}(E/F_{\infty})$ is a cotorsion $\Lambda$-module if and only if $\mathfrak{S}^{\overrightarrow{s}}(E/F_{\infty})$ is a torsion $\Lambda$-module. Since $\mathfrak{S}^{\overrightarrow{s}}(E/F_{\infty})$ is contained in $\HIw(F_{\infty}, T)$ which is torsionfree by Lemma \ref{H1Iw}, the latter statement holds if and only if $\mathfrak{S}^{\overrightarrow{s}}(E/F_{\infty})=0$. But this is precisely equivalent to having
$H^2(G_{\Sigma}(F_{\infty}),\Ep)=0$ and the short exact sequence in the proposition.
\end{proof}

The next proposition records consequence of the cotorsionness of $\Sel^{\fs}(E/F_{\infty})$ on the structure of the Iwasawa cohomology groups $H^i_{\Iw}(F_{\infty},T)$.

\begin{proposition}\label{prop:leopoldt}
Assume that $(S1)$ and $(S2)$ are valid. Suppose that $\Sel^{\fs}(E/F_{\infty})$ is cotorsion over $\Lambda$. Then the following statements are valid.
\begin{itemize}
    \item[(a)] $\HIw(F_{\infty}, T)$ is a 
free $\Lambda$-module with $\Lambda$-rank $[F:\QQ]$.

\item[(b)] $H^2_{\Iw}(F_{\infty}, T)$ is a 
torsion $\Lambda$-module.
\end{itemize}
\end{proposition}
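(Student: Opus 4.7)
The strategy is to re-apply Jannsen's spectral sequence
\[ \Ext^i_{\Lambda}\bigl(H^j(G_{\Sigma}(F_{\infty}),\Ep)^{\vee},\Lambda\bigr) \Longrightarrow H_{\Iw}^{i+j}(F_{\infty}, T) \]
already used in the proof of Lemma~\ref{H1Iw}, but now pushing past the five-term exact sequence. The key inputs that cause the sequence to collapse are (i) the vanishing $E(F_{\infty})[p^{\infty}]=0$, which was established inside the proof of Lemma~\ref{H1Iw}, and (ii) the vanishing $H^2(G_{\Sigma}(F_{\infty}),\Ep)=0$, which is part of the equivalence in Proposition~\ref{torsion H2} and thus available under (S3). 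Setting $N := H^1(G_{\Sigma}(F_{\infty}),\Ep)$, the only column of $E_2$-terms that can contribute is $j=1$. Since neighbouring terms that might support non-zero differentials or extension problems all vanish, one reads off
\begin{align*}
H^1_{\Iw}(F_{\infty},T) &\cong \Hom_{\Lambda}(N^{\vee},\Lambda), \\
H^2_{\Iw}(F_{\infty},T) &\cong \Ext^1_{\Lambda}(N^{\vee},\Lambda).
\end{align*}

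Part (b) then follows immediately: $N$ is cofinitely generated over $\Lambda$ (this is the input used in the corank calculations cited in the proof of Proposition~\ref{torsion H2}), so $N^{\vee}$ is finitely generated, and $\Ext^1_{\Lambda}(M,\Lambda)$ is $\Lambda$-torsion for every finitely generated $\Lambda$-module $M$ (see, e.g., \cite[Proposition 5.5.3]{NSW}).

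For part (a), the module $H^1_{\Iw}(F_{\infty},T) \cong \Hom_{\Lambda}(N^{\vee},\Lambda)$ is the $\Lambda$-dual of a finitely generated $\Lambda$-module, and is therefore reflexive. Since $\Lambda \cong \Zp[[X]]$ is a regular local ring of Krull dimension $2$, any finitely generated reflexive $\Lambda$-module has depth equal to $\mathrm{depth}(\Lambda)=2$, hence has projective dimension $0$ by Auslander-Buchsbaum, and is thus free. The rank is computed by $\mathrm{rank}_{\Lambda}\Hom_{\Lambda}(N^{\vee},\Lambda)=\mathrm{rank}_{\Lambda}N^{\vee}=\mathrm{corank}_{\Lambda}N$, and combining the formula $\mathrm{corank}_{\Lambda}H^1 - \mathrm{corank}_{\Lambda}H^2 = [F:\QQ]$ recalled in the proof of Proposition~\ref{torsion H2} with $H^2(G_{\Sigma}(F_{\infty}),\Ep)=0$ gives $\mathrm{corank}_{\Lambda}N = [F:\QQ]$.

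The only point requiring care is the claim that the spectral sequence really does identify $H^1_{\Iw}$ and $H^2_{\Iw}$ with the stated $\Ext$-terms with no residual extension data, but the simultaneous vanishing of the $j=0$ and $j=2$ columns of $E_2$ makes this essentially automatic; everything else is routine, so I expect no serious obstacle.
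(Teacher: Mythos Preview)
Your proof is correct and follows the same overall strategy as the paper: both use the vanishing of $E(F_\infty)[p^\infty]$ and $H^2(G_\Sigma(F_\infty),\Ep)$ to collapse Jannsen's spectral sequence, obtaining the identifications $H^1_{\Iw}\cong\Hom_\Lambda(N^\vee,\Lambda)$ and $H^2_{\Iw}\cong\Ext^1_\Lambda(N^\vee,\Lambda)$, from which (b) and the rank claim in (a) follow as you indicate.

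The one genuine difference is in how freeness is obtained. You argue directly from commutative algebra: the $\Lambda$-dual of a finitely generated module is reflexive, and over the two-dimensional regular local ring $\Lambda$ every finitely generated reflexive module is free (via Auslander--Buchsbaum). The paper instead takes a descent route: it invokes a second spectral sequence (the homological one from \cite[Theorem~3.1.8]{LimSharifi}) to produce an injection $H^1_{\Iw}(F_\infty,T)_\Gamma\hookrightarrow H^1(G_\Sigma(F),T)$, uses the $\Zp$-torsion-freeness of the target (Lemma~\ref{H1Iw}) to conclude that $H^1_{\Iw}(F_\infty,T)_\Gamma$ is $\Zp$-torsion-free, checks $H^1_{\Iw}(F_\infty,T)^\Gamma=0$, and then applies the structure criterion \cite[Proposition~5.3.19(ii)]{NSW}. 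Your route is shorter and avoids the second spectral sequence entirely; the paper's route has the minor advantage of making the connection to the arithmetic of $F$ (rather than $F_\infty$) explicit, but for the purposes of this proposition your argument is cleaner.
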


\begin{proof}
It follows from the hypothesis and Proposition \ref{torsion H2} that $H^2(G_{\Sigma}(F_{\infty}),\Ep)=0$. { Also, as seen in the proof of Lemma~\ref{H1Iw}, we have $E(F_{\infty})[p^\infty]=0$.  Taking these into account}, the spectral sequence of Jannsen
\[ \Ext^i_{\Lambda}\big(H^j(G_{\Sigma}(F_{\infty}),\Ep)^{\vee},\Lambda\big) \Longrightarrow H_{\mathrm{Iw}}^{i+j}(F_{\infty}, T)\]
then degenerates yielding
\[\HIw(F_{\infty},T)\cong \Ext^0(H^1(G_{\Sigma}(F_{\infty}),\Ep)^{\vee},\Lambda)\]
and
\[H^2_{\Iw}(F_{\infty},T)\cong \Ext^1(H^1(G_{\Sigma}(F_{\infty}),\Ep)^{\vee},\Lambda).\]
It follows from this that $\HIw(F_{\infty}, T)$ has $\Lambda$-rank $[F:\QQ]$ and $H^2_{\Iw}(F_{\infty}, T)$ is a 
torsion $\Lambda$-module.
On the other hand, the homological spectral sequence 
\[ H^i(\Gamma, H^{-j}_{\Iw}(F_{\infty}, T))\Longrightarrow H^{-i-j}(G_{\Sigma}(F), T)\]
(cf. \cite[Theorem 3.1.8]{LimSharifi}) gives an injection 
\[ \HIw(F_{\infty},T)_{\Gamma}\hookrightarrow H^1(G_{\Sigma}(F),T).  \]
Since $H^1(G_{\Sigma}(F),T)$ is $\Zp$-torsionfree by Lemma \ref{H1Iw}, so is $\HIw(F_{\infty},T)_{\Gamma}$. On the other hand, by Lemma \ref{H1Iw} again, $\HIw(F_{\infty},T)$ is $\Lambda$-torsionfree and hence it follows that $\HIw(F_{\infty},T)^{\Gamma} =0$. We may now apply \cite[Proposition 5.3.19(ii)]{NSW} to obtain the  freeness property.
\end{proof}

\section{Kobayashi ranks}\label{S:kob}
Following \cite[\S10]{kobayashi03}, we define the Kobayashi ranks as follows.
\begin{defn}
Let $(M_n)_{n\ge1}$ be a projective system of finitely generated $\Zp$-modules with the connecting maps $\pi_n:M_n\rightarrow M_{n-1}$. If the kernel and the cokernel of $\pi_n$ are both finite, we define
    \[
     \nabla M_n=\length_{\Zp}\ker\pi_n-\length_{\Zp}\coker\pi_n+\dim_{\Qp}M_{n-1}\otimes_{\Zp}\Qp.
    \]
\end{defn}

{ 
We mention two important observations which will be used in the subsequent sections of the paper. Firstly, for a given projective system  $(M_n)_{n\ge1}$ of finitely generated $\Zp$-modules, 
$\rank_{\Zp}M_n$ is bounded independent of $n$ if and only if $\nabla M_n$ is defined for $n\gg0$. Secondly, in the event that  $(M_n)_{n\ge1}$ is a projective system of finite $\Zp$-modules, we have
\[ \nabla M_n= e(M_n)- e(M_{n-1}). \]}

We now review further important properties of Kobayashi ranks that we shall need later.

\begin{proposition}\label{prop:nabla}Let $n\ge1$ be an integer.
\begin{itemize}
    \item[(a)] Let $f\in\Lambda$. Suppose that $\Phi_n\nmid f$ and write $$f=gh,\quad\text{where }h=\gcd(\omega_{n-1},f).$$
Then 
$$\dim_{\Qp}\left(\Lambda/(f,\omega_{n-1})\right)\otimes\Qp=\ord_{\epsilon_n}h(\epsilon_n)$$
Furthermore, $\pi:\Lambda/(f,\omega_n)\rightarrow \Lambda/(f,\omega_{n-1})$ has finite kernel with $$\length_{\Zp}\ker\pi=\ord_{\epsilon_n}g(\epsilon_n).$$
In particular,
\[
\nabla \Lambda/(f,\omega_n)=\ord_{\epsilon_n}g(\epsilon_n)+\ord_{\epsilon_n}h(\epsilon_n)=\ord_{\epsilon_n}f(\epsilon_n).
\]
\item[(b)] Suppose that $M$ is a finitely generated torsion $\Lambda$-module and that $f$ is a characteristic element of $M$. Then $\nabla M_{\Gamma_n}$ is defined and equals
\[
\ord_{\epsilon_n}f(\epsilon_n)=p^{n-1}(p-1)\mu(M)+\lambda(M)
\]
when $n\gg0$.
\end{itemize}
 \end{proposition}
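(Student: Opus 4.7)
For part (a), we separately compute the three terms making up $\nabla$. First, since $\omega_n=\Phi_n\omega_{n-1}$, the map $\pi$ is surjective so $\coker\pi=0$. Next, using that $\omega_{n-1}$ is a distinguished polynomial, $\Lambda/(\omega_{n-1})$ is a free $\Zp$-module naturally isomorphic to $\Zp[X]/(\omega_{n-1})$. Inverting $p$ and applying the Chinese Remainder Theorem in $\Qp[X]$ to the factorization $\omega_{n-1}=\prod_{k=0}^{n-1}\Phi_k$ into pairwise coprime irreducibles (with $\Phi_0=X$) yields
\[
\Lambda/(f,\omega_{n-1})\otimes_{\Zp}\Qp\;\cong\;\prod_{\substack{0\le k\le n-1\\ \Phi_k\mid f}}\Qp(\zeta_{p^k}),
\]
of $\Qp$-dimension $\sum\phi(p^k)$ over those indices. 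A direct computation using $(1+\epsilon_n)^{p^k}=\zeta_{p^{n-k}}$ shows $v_{\epsilon_n}(\Phi_k(\epsilon_n))=\phi(p^k)$ for $0\le k<n$, so this dimension equals $\ord_{\epsilon_n}h(\epsilon_n)$. For the kernel, the UFD identity $f\Lambda\cap\omega_{n-1}\Lambda=g\omega_{n-1}\Lambda$ (using that $\omega_{n-1}$ is squarefree and $h=\gcd(f,\omega_{n-1})$), combined with the isomorphism $\omega_{n-1}\Lambda/\omega_n\Lambda\cong\Lambda/(\Phi_n)$, identifies
\[
\ker\pi=(f,\omega_{n-1})/(f,\omega_n)\;\cong\;\Lambda/(g,\Phi_n)\;\cong\;\Zp[\epsilon_n]/(g(\epsilon_n)).
\]
Because $\Zp[\epsilon_n]$ is a totally ramified DVR over $\Zp$ with residue field $\mathbb{F}_p$, its $\Zp$-length equals $v_{\epsilon_n}(g(\epsilon_n))=\ord_{\epsilon_n}g(\epsilon_n)$. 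Summing gives $\nabla\Lambda/(f,\omega_n)=\ord_{\epsilon_n}g(\epsilon_n)+\ord_{\epsilon_n}h(\epsilon_n)=\ord_{\epsilon_n}f(\epsilon_n)$.

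For part (b), we reduce to part (a) via the structure theorem: there is a pseudo-isomorphism $M\to N:=\bigoplus_j\Lambda/(g_j)$ (with each $g_j$ a power of $p$ or of an irreducible distinguished polynomial) such that $f=\prod_j g_j$, up to a unit, is a characteristic element. The key auxiliary fact is that any finite $\Lambda$-module $K$ is killed by $\omega_e$ for some $e$, so for $n\ge e$ we have $K_{\Gamma_n}=K$ with identity transition maps, whence $\nabla K_{\Gamma_n}=0$ for $n\gg 0$. Applying this to the pseudo-null kernel and cokernel of $M\to N$, together with the long exact sequence of $\Gamma_n$-homology and the snake-lemma additivity of $\nabla$ along short exact sequences of projective systems, yields $\nabla M_{\Gamma_n}=\nabla N_{\Gamma_n}$ for $n\gg 0$, which in turn equals $\sum_j\ord_{\epsilon_n}g_j(\epsilon_n)=\ord_{\epsilon_n}f(\epsilon_n)$ by part (a). For the asymptotic, Weierstrass preparation writes $f=p^{\mu(M)}P$ with $P$ distinguished of degree $\lambda(M)$; since $v_{\epsilon_n}(p)=\phi(p^n)$, one has $\ord_{\epsilon_n}(p^{\mu(M)})=p^{n-1}(p-1)\mu(M)$, and because each root $\alpha$ of $P$ satisfies $v_{\epsilon_n}(\alpha)=\phi(p^n)v_p(\alpha)\to\infty$ while $v_{\epsilon_n}(\epsilon_n)=1$, for $n\gg 0$ we get $v_{\epsilon_n}(\epsilon_n-\alpha)=1$ for every root, hence $\ord_{\epsilon_n}P(\epsilon_n)=\lambda(M)$.

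The most subtle point is the pseudo-isomorphism reduction in part (b). Since $\Gamma_n$-coinvariants are not left exact, the pseudo-null kernel and cokernel contribute a priori through $H_1(\Gamma_n,-)$ terms in the induced long exact sequence, and one must verify that all such contributions stabilize for $n$ large (again because $\omega_n$ eventually annihilates any given finite $\Lambda$-module) so that they drop out of the $\nabla$-increment. Once this stabilization is in place, the remaining computations are routine.
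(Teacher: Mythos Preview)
The paper does not actually prove this proposition: its entire proof reads ``This is \cite[Lemma~10.5]{kobayashi03}.'' Your proposal therefore supplies what the paper omits, and your argument is correct and essentially the standard one (and is presumably close to what Kobayashi does in the cited lemma).

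A few minor remarks on presentation. In part (a), your identification $\ker\pi\cong\Lambda/(g,\Phi_n)$ is right; the cleanest way to see it is exactly as you indicate, via the surjection $\Lambda\to(f,\omega_{n-1})/(f,\omega_n)$ sending $a\mapsto a\omega_{n-1}$ and checking its kernel is $(g,\Phi_n)$ using $f\Lambda\cap\omega_{n-1}\Lambda=g\omega_{n-1}\Lambda$. In part (b), your handling of the pseudo-isomorphism is fine, but note that for the $H_1(\Gamma_n,K)=K^{\Gamma_n}$ terms the transition maps are the norm $1+\gamma_n+\cdots+\gamma_n^{p-1}$, which for $n\gg0$ becomes multiplication by $p$ rather than the identity; the point (as in \cite[Lemma~10.4(ii)]{kobayashi03}, also used in the paper's Lemma~\ref{lemma:NSWfg}) is that multiplication by $p$ on a finite $p$-group has $\length(\ker)=\length(\coker)$, so $\nabla$ still vanishes. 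Finally, in the asymptotic step you should write $f=p^{\mu(M)}P\cdot u$ with $u\in\Lambda^\times$ (Weierstrass preparation), but of course $\ord_{\epsilon_n}u(\epsilon_n)=0$, so this does not affect the conclusion.
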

\begin{proof}
This is \cite[Lemma~10.5]{kobayashi03}.
\end{proof}

We shall need a slightly more general version of part (a) of this proposition, which can be proved in exactly the same way.
\begin{proposition}\label{prop:nablawithcoeff}Let $n\ge1$ be an integer and $\cO$ the ring of integers of a finite extension of $\Qp$ of degree $k$.
 Let $f\in\Lambda_\cO$. Suppose that $\Phi_n\nmid f$ and write $$f=gh,\quad\text{where }h=\gcd(\omega_{n-1},f).$$
Then 
$$\dim_{\Qp}\left(\Lambda_\cO/(f,\omega_{n-1})\right)\otimes\Qp=k\cdot\ord_{\epsilon_n}h(\epsilon_n)$$
Furthermore, $\pi:\Lambda_\cO/(f,\omega_n)\rightarrow \Lambda_\cO/(f,\omega_{n-1})$ has finite kernel with $$\length_{\Zp}\ker\pi=k\cdot \ord_{\epsilon_n}g(\epsilon_n).$$
In particular,
\[
\nabla \Lambda_\cO/(f,\omega_n)=k\cdot\ord_{\epsilon_n}f(\epsilon_n).
\]
\end{proposition}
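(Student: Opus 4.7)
The plan is to mirror the proof of Proposition~\ref{prop:nabla}, tracking the factor $k=[\cO:\Zp]$ that arises from base change.

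First, I would apply Weierstrass preparation in $\Lambda_\cO=\cO[[X]]$ to reduce $f$ (up to a unit) to a distinguished polynomial. Since $\omega_{n-1}\in\Zp[X]$ is itself a distinguished and separable polynomial, the gcd $h=\gcd(f,\omega_{n-1})$ can likewise be taken distinguished in $\cO[X]$, dividing $\omega_{n-1}$.

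For the dimension formula, note that $\Lambda_\cO/\omega_{n-1}=\cO[X]/\omega_{n-1}$ is free of rank $p^{n-1}$ over $\cO$, hence free of rank $k p^{n-1}$ over $\Zp$, so
\[
\dim_{\Qp}\bigl((\Lambda_\cO/(f,\omega_{n-1}))\otimes\Qp\bigr)
  = k\cdot\dim_{L} L[X]/(f,\omega_{n-1}),
\]
with $L=\mathrm{Frac}(\cO)$. The separability of $\omega_{n-1}$ gives $L[X]/\omega_{n-1}\cong\prod_\alpha L[X]/q_\alpha$ as a product of fields, and modding by $f$ retains precisely the factors $q_\alpha\mid h$, so $\dim_L L[X]/(f,\omega_{n-1})=\deg h$. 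To identify $\deg h$ with $\ord_{\epsilon_n}h(\epsilon_n)$, I would evaluate $h=\prod_\alpha q_\alpha$ root by root at $\epsilon_n$: each root $\beta=\zeta_{p^d}^a-1$ of $q_\alpha$ (with $d\le n-1$ and $(a,p)=1$) satisfies $\epsilon_n-\beta=\zeta_{p^n}(1-\zeta_{p^n}^{p^{n-d}a-1})$, and $p^{n-d}a-1$ is coprime to $p$, whence $\ord_{\epsilon_n}(\epsilon_n-\beta)=1$. Summing over the roots of $h$ yields $\ord_{\epsilon_n}h(\epsilon_n)=\deg h$.

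For the kernel formula, the same double-quotient argument used in Proposition~\ref{prop:nabla} identifies $\ker\pi$ with $\Lambda_\cO/(g,\Phi_n)$: the map $b\mapsto\omega_{n-1}b\bmod(f,\omega_n)$ surjects onto $\ker\pi$, and its kernel is $(g,\Phi_n)$ since $\omega_{n-1}(b-\Phi_n c)\in(gh)$ forces $b-\Phi_n c\in(g)$ (because $\gcd(\omega_{n-1}/h,g)=1$). The hypothesis $\Phi_n\nmid f$, combined with $\gcd(\Phi_n,\omega_{n-1})=1$, gives $\gcd(g,\Phi_n)=1$, so $\Lambda_\cO/(g,\Phi_n)$ is $\Zp$-torsion and finite. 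In the paper's setting, $L/\Qp$ is unramified, and so it is linearly disjoint from the totally ramified $\Qp(\zeta_{p^n})/\Qp$; consequently $\Phi_n$ remains irreducible over $\cO$, and $\Lambda_\cO/(\Phi_n)\cong\cO[\epsilon_n]$ is the ring of integers of the local field $L(\zeta_{p^n})$, a DVR with uniformizer $\epsilon_n$ and residue field of cardinality $p^k$. Therefore
\[
\length_{\Zp}\bigl(\Lambda_\cO/(g,\Phi_n)\bigr)
=\length_{\Zp}\bigl(\cO[\epsilon_n]/(g(\epsilon_n))\bigr)
=k\cdot\ord_{\epsilon_n}g(\epsilon_n).
\]

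The main obstacle is the identification of $\Lambda_\cO/(\Phi_n)$ with a DVR; this rests on the irreducibility of $\Phi_n$ over $\cO$, which holds precisely because $L/\Qp$ is unramified in the intended application. In a fully general setting one would instead replace this step by a determinant/norm computation for multiplication by $g$ on the free $\cO$-module $\cO[X]/\Phi_n$, interpreting $\ord_{\epsilon_n}g(\epsilon_n)$ as extracted from the resultant $\mathrm{Res}(g,\Phi_n)$ via $N_{L/\Qp}$.
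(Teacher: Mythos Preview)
Your proposal is correct and follows exactly the approach the paper indicates: the paper gives no proof beyond asserting that the argument of Proposition~\ref{prop:nabla}(a) (i.e., \cite[Lemma~10.5]{kobayashi03}) carries over verbatim with the extra factor $k$, and you have supplied those details faithfully.

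One minor logical slip: the inference ``$\Phi_n\nmid f$ together with $\gcd(\Phi_n,\omega_{n-1})=1$ gives $\gcd(g,\Phi_n)=1$'' is only valid once $\Phi_n$ is known to be irreducible over $\cO$; you invoke that irreducibility in the following sentence rather than before it, so the two steps should be swapped. More substantively, your observation that this irreducibility genuinely requires $L/\Qp$ to be linearly disjoint from $\Qp(\zeta_{p^n})$ is correct, and in fact the proposition as literally stated can fail without it: taking $\cO=\Zp[\zeta_p]$, $n=1$, and $f=X-(\zeta_p-1)$, one has $\Phi_1\nmid f$ yet $\ker\pi\cong\Lambda_\cO/(g,\Phi_1)\cong\cO$ is infinite. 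The paper does not flag this, but all of its applications take $\cO=\cO_w$ with $F_w/\Qp$ unramified by hypothesis (S1), so the issue never arises. Your closing remark about replacing the DVR step by a resultant/norm computation is the right way to repair the statement in the fully general case, though the formula would then involve a sum over all primitive $p^n$-th roots of unity rather than just the single value $\ord_{\epsilon_n}g(\epsilon_n)$.
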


We shall also need the following lemma on how Kobayashi ranks behave under short exact sequences.

\begin{lemma}\label{lem:kobSES}
Suppose we have a short exact sequence of inverse systems
\[
0\rightarrow (M_n')\rightarrow (M_n)\rightarrow (M_n'')\rightarrow 0.
\]
If two of $\nabla M_n'$, $\nabla M_n$, $\nabla M_n''$ are defined, then so is the third. Furthermore,
\[
\nabla M_n=\nabla M_n'+\nabla M_n''.
\]
\end{lemma}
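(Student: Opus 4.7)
The plan is to apply the snake lemma to the commutative diagram
\[
\begin{tikzcd}
0 \arrow[r] & M_n' \arrow[r] \arrow[d, "\pi_n'"] & M_n \arrow[r] \arrow[d, "\pi_n"] & M_n'' \arrow[r] \arrow[d, "\pi_n''"] & 0 \\
0 \arrow[r] & M_{n-1}' \arrow[r] & M_{n-1} \arrow[r] & M_{n-1}'' \arrow[r] & 0
\end{tikzcd}
\]
(valid since the transition maps are $\Zp$-linear and commute with the maps of the given short exact sequence of inverse systems). This yields a six-term exact sequence
\[
0 \lra \ker\pi_n' \lra \ker\pi_n \lra \ker\pi_n'' \lra \coker\pi_n' \lra \coker\pi_n \lra \coker\pi_n'' \lra 0.
\]

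First I would verify finiteness: whichever two of the three nabla's are assumed defined, the snake sequence shows that the remaining kernel and cokernel are each sandwiched between (or a sub/quotient of) finite $\Zp$-modules, hence finite. For example, if $\nabla M_n'$ and $\nabla M_n''$ are defined, the outer pair $\ker\pi_n',\ker\pi_n''$ being finite forces $\ker\pi_n$ to be finite, and similarly for the cokernels. The other two cases follow in the same style.

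Next I would compute the additivity. Since the six-term sequence consists of finite $\Zp$-modules, alternating additivity of $\length_{\Zp}$ gives
\[
\length\ker\pi_n - \length\coker\pi_n = \bigl(\length\ker\pi_n' - \length\coker\pi_n'\bigr) + \bigl(\length\ker\pi_n'' - \length\coker\pi_n''\bigr).
\]
Separately, additivity of $\Qp$-dimension across the short exact sequence $0 \to M_{n-1}'\otimes\Qp \to M_{n-1}\otimes\Qp \to M_{n-1}''\otimes\Qp \to 0$ gives
\[
\dim_{\Qp} M_{n-1}\otimes\Qp = \dim_{\Qp} M_{n-1}'\otimes\Qp + \dim_{\Qp} M_{n-1}''\otimes\Qp.
\]
Adding these two identities yields $\nabla M_n = \nabla M_n' + \nabla M_n''$, completing the proof.

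No step looks serious; the only thing to watch is bookkeeping in the finiteness argument when the ``missing'' nabla sits in the middle of the snake sequence rather than at an end, but in every case each term is trapped between finite neighbours, so finiteness propagates.
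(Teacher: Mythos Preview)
Your argument is correct: the snake lemma yields the six-term exact sequence, finiteness propagates in each of the three cases exactly as you describe, and the alternating-sum identity for lengths together with additivity of $\Qp$-dimension gives the formula. The paper does not prove this lemma directly but simply cites \cite[Lemma~10.4~i)]{kobayashi03}; your write-up is precisely the standard snake-lemma argument behind that citation, so the approaches coincide.
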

\begin{proof}
This is \cite[Lemma~10.4 i)]{kobayashi03}.
\end{proof}

We record the following lemma, which will be useful later.

\begin{lemma}\label{lemma:NSWfg}
Let $M$ be a finitely generated $\Lambda$-module. Then $M^{\delta} : = \cup_{n}M^{\Gamma_n}$ is finitely generated as a $\Zp$-module and there exists an integer $n_0$ such that $M^{\delta}  = M^{\Gamma_{n_0}}$.

In particular, $\nabla M^{\Gamma_n} =0$ for $n\gg 0$, where the transition maps $ M^{\Gamma_{n+1}}\lra  M^{\Gamma_n}$ are given by multiplication by $1+\gamma_n+\cdots +\gamma_n^p$ with $\gamma_n$ being a topological generator of $\Gamma_n$ chosen so that $\gamma_n^p=\gamma_{n+1}$.
\end{lemma}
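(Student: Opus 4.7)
The plan is to reduce everything to understanding $M^{\Gamma_n}$ as an $\omega_n$-torsion submodule. Under the identification $\Lambda \cong \Zp[[X]]$ sending $\gamma - 1 \mapsto X$, we have $\gamma_n = \gamma^{p^n}$, so the condition $\gamma_n m = m$ becomes $(\gamma^{p^n}-1)m = \omega_n \cdot m = 0$. Hence $M^{\Gamma_n} = M[\omega_n]$. Since $M$ is a finitely generated module over the Noetherian ring $\Lambda$, the ascending chain $M[\omega_1] \subseteq M[\omega_2] \subseteq \cdots$ stabilizes, so there exists $n_0$ with $M^{\Gamma_n} = M^{\Gamma_{n_0}}$ for all $n \ge n_0$, giving $M^\delta = M^{\Gamma_{n_0}}$.

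For the finite generation claim, I would observe that $M^\delta$ is annihilated by $\omega_{n_0}$ and hence is a finitely generated module over the quotient $\Lambda/\omega_{n_0} \cong \Zp[X]/((1+X)^{p^{n_0}}-1)$, which is a free $\Zp$-module of rank $p^{n_0}$. Consequently $M^\delta$ is finitely generated as a $\Zp$-module.

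For the Kobayashi rank assertion, the key observation is that for $n > n_0$ both $M^{\Gamma_{n-1}}$ and $M^{\Gamma_n}$ coincide with $M^\delta$, and $\gamma_n \in \Gamma_n \subseteq \Gamma_{n_0}$ acts trivially on $M^\delta = M^{\Gamma_{n_0}}$. Therefore the transition map $\pi_n = 1 + \gamma_n + \cdots + \gamma_n^{p-1}$ acts as multiplication by $p$ on $M^\delta$. Writing $M^\delta \cong N_{\mathrm{tor}} \oplus \Zp^r$ with $r = \rank_\Zp M^\delta$, multiplication by $p$ has kernel $N_{\mathrm{tor}}[p]$ and cokernel $(N_{\mathrm{tor}}/pN_{\mathrm{tor}}) \oplus (\Zp/p)^r$. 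Since $N_{\mathrm{tor}}$ is finite, $\length_\Zp N_{\mathrm{tor}}[p] = \length_\Zp(N_{\mathrm{tor}}/pN_{\mathrm{tor}})$, and so $\length_\Zp \ker \pi_n - \length_\Zp \coker \pi_n = -r$. Combined with $\dim_\Qp (M^{\Gamma_{n-1}} \otimes \Qp) = r$, this yields $\nabla M^{\Gamma_n} = 0$ for every $n > n_0$.

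No significant obstacle is expected: this is a routine Noetherian stabilization argument coupled with an elementary structure-theorem computation. The only mild subtlety is bookkeeping the transition map correctly as the norm operator $1 + \gamma_n + \cdots + \gamma_n^{p-1}$ and verifying it maps $M^{\Gamma_{n+1}}$ into $M^{\Gamma_n}$, which follows from $\gamma_n^p = \gamma_{n+1}$ acting trivially on the source.
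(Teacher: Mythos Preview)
Your proof is correct and follows essentially the same approach as the paper. The paper's proof cites \cite[Lemma~5.3.14(i)]{NSW} for the stabilization and finite generation of $M^\delta$, and \cite[Lemma~10.4(ii)]{kobayashi03} for the vanishing of $\nabla M^{\Gamma_n}$ once the transition map is identified with multiplication by $p$; you have simply unpacked both citations with direct arguments (Noetherian ACC on the chain $M[\omega_n]$, and an explicit structure-theorem computation), arriving at the same conclusion by the same route.
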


\begin{proof}
The first assertion of the lemma follows from \cite[Lemma 5.3.14(i)]{NSW}. As a result, there exists $n_0$ such that $M^{\Gamma_{n}} = M^{\Gamma_{n_0}}$ is finitely generated over $\Zp$ for $n\geq n_0$. It then follows that the transition map $M^{\Gamma_{n+1}}\lra  M^{\Gamma_n}$ coincides with the multiplication by $p$-map for $n\gg 0$. The lemma now follows from \cite[Lemma~10.4(ii)]{kobayashi03}.
\end{proof}

Similar to \cite[\S10]{kobayashi03}, we consider the following groups
 \[\mathcal{Y}(E/F_n)= \coker\left(H^1(G_{\Sigma}(F_n),T)\longrightarrow\bigoplus_{w\in\Sigma_p}H^1_{/f}(F_{n,w},T) \right),\]
 \[\cY'(E/F_n) = \coker\left(\HIw(F_\infty,T)_{\Gamma_n}\longrightarrow\bigoplus_{w\in\Sigma_p}H^1_{/f}(F_{n,w},T) \right),\]
 where $H^1_{/f}(F_{n,w},T)$ denotes the quotient $\frac{H^1(F_{n,w},T)}{E(F_{n,w})\otimes\Zp}$.
As we shall see in \S\ref{sec:proofs}, one of the key ingredients of studying the growth of  $\rank_{\ZZ}E(F_n)$ and $\sha_p(E/F_n)$ is to understand $\nabla\cY(E/F_n)$. We end this section by the following generalization of \cite[Proposition~10.6 i)]{kobayashi03}.

\begin{proposition}\label{prop:equivY}
Suppose that $(S1)$ and $(S2)$ are valid. Then for $n\gg 0$, we have \[\nabla \mathcal{Y}(E/F_n) = \nabla \mathcal{Y'}(E/F_n).\]
\end{proposition}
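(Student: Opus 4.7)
My approach is a descent argument. First I would establish the short exact sequence
\[
0 \to \HIw(F_\infty, T)_{\Gamma_n} \to H^1(G_\Sigma(F_n), T) \to H^2_\Iw(F_\infty, T)^{\Gamma_n} \to 0,
\]
obtained by tensoring the free resolution $0 \to \Lambda \xrightarrow{\omega_n} \Lambda \to \Lambda/\omega_n \to 0$ against the Iwasawa cohomology complex $R\Gamma_\Iw(F_\infty, T)$ and invoking the standard descent isomorphism $R\Gamma_\Iw(F_\infty, T) \otimes^{\mathbb{L}}_\Lambda \Lambda/\omega_n \simeq R\Gamma(G_\Sigma(F_n), T)$ (in the spirit of \cite[Theorem~3.1.8]{LimSharifi}). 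Injectivity of the leftmost map is ensured by the vanishing of $H^0_\Iw(F_\infty, T)$, which in turn follows from $E(F_\infty)[p^\infty]=0$ as observed in the proof of Lemma~\ref{H1Iw}.

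Next, I would fit this sequence into a commutative diagram with the common map to $D_n := \bigoplus_{w \in \Sigma_p} H^1_{/f}(F_{n,w}, T)$ and apply the snake lemma. Writing $\beta_n$ for the map $H^1(G_\Sigma(F_n), T) \to D_n$, one obtains the exact sequence
\[
\ker\beta_n \to H^2_\Iw(F_\infty, T)^{\Gamma_n} \to \cY'(E/F_n) \to \cY(E/F_n) \to 0.
\]
Letting $K_n$ denote the kernel of $\cY'(E/F_n) \twoheadrightarrow \cY(E/F_n)$, this yields a short exact sequence
\[
0 \to K_n \to \cY'(E/F_n) \to \cY(E/F_n) \to 0
\]
in which $K_n$ is realized as a quotient of $H^2_\Iw(F_\infty, T)^{\Gamma_n}$.

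By Proposition~\ref{prop:leopoldt}(b), $H^2_\Iw(F_\infty, T)$ is $\Lambda$-torsion, so Lemma~\ref{lemma:NSWfg} gives that $H^2_\Iw(F_\infty, T)^{\Gamma_n}$ stabilizes to a fixed finitely generated $\Zp$-module $M^\delta$, with transition maps eventually given by multiplication by $p$ and $\nabla=0$ for $n\gg 0$. The main obstacle is transferring this vanishing to the quotient $K_n$: one must check that, within the compatible inverse system, the kernels of $H^2_\Iw(F_\infty, T)^{\Gamma_n} \twoheadrightarrow K_n$ also stabilize inside the Noetherian $\Zp$-module $M^\delta$, so that $K_n$ itself becomes a fixed finitely generated $\Zp$-module whose transition maps are eventually multiplication by $p$; then $\nabla K_n = 0$ follows by the same Euler-characteristic computation as in Lemma~\ref{lemma:NSWfg}. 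Granted this, Lemma~\ref{lem:kobSES} applied to the short exact sequence above delivers $\nabla \cY'(E/F_n) = \nabla K_n + \nabla \cY(E/F_n) = \nabla \cY(E/F_n)$ for $n\gg 0$, as claimed.
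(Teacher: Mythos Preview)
Your approach mirrors the paper's exactly: the same descent short exact sequence, the same snake-lemma reduction to a quotient $K_n$ of $H^2_\Iw(F_\infty,T)^{\Gamma_n}$, and the same Noetherian stabilization argument (via Lemma~\ref{lemma:NSWfg} and \cite[Lemma~10.4(ii)]{kobayashi03}) to conclude $\nabla K_n=0$. One caveat: you invoke Proposition~\ref{prop:leopoldt}(b), which requires hypothesis (S3), to obtain the $\Lambda$-torsionness of $H^2_\Iw(F_\infty,T)$---but the statement you are proving only assumes (S1)--(S2), and in any case Lemma~\ref{lemma:NSWfg} needs only that $H^2_\Iw(F_\infty,T)$ be finitely generated over $\Lambda$ (which is standard), so the appeal to torsionness can simply be dropped.
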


\begin{proof}
For each $n$, we have the following commutative diagram
\[  \entrymodifiers={!! <0pt, .8ex>+} \SelectTips{eu}{}\xymatrix{
    H^i(G_{\Sigma}(F), \Zp[\Gal(F_{n+1}/F)]\otimes_{\Zp} T) \ar[d]_{\pr} \ar[r]^(0.62){sh}_(0.62){\sim} & H^i(G_{\Sigma}(F_{n+1}),T) \ar[d]_{\cor}  \\
   H^i(G_{\Sigma}(F), \Zp[\Gal(F_n/F)]\otimes_{\Zp} T)  \ar[r]^(0.6){sh}_(0.6){\sim} &  H^i(G_{\Sigma}(F_{n}),T), } \]
which upon taking inverse limits induces an isomorphism
\[ H^i(G_{\Sigma}(F), \LL^{\iota}\otimes T) \cong \plim_n H^i(G_{\Sigma}(F_{n+1}),T)=: H^i_{\Iw}(F_{\infty}, T),  \]
{where here $\LL^{\iota}$ is $\LL$ as a $\Zp$-module  on which $\gamma\in\Gamma$ acts via multiplication by $\gamma^{-1}$. }

Write $\gamma_n$ for a topological generator of $\Gamma_n$ which is chosen so that $\gamma_n^p=\gamma_{n+1}$.
Taking $G_\Sigma(F)$-cohomology in the short exact sequence
\[ 0\lra \LL^{\iota}\otimes_{\Zp} T \stackrel{\gamma_n-1}{\lra} \LL^{\iota}\otimes_{\Zp} T \lra \Zp[\Gal(F_n/F)]\otimes_{\Zp} T\lra 0\]
yields the following long exact sequence
\[ H^1_{\Iw}(F_{\infty}, T)  \stackrel{\gamma_n-1}{\lra} H^1_{\Iw}(F_{\infty}, T) \lra H^1(G_{\Sigma}(F_{n}),T) \lra H^2_{\Iw}(F_{\infty}, T)  \stackrel{\gamma_n-1}{\lra}  H^2_{\Iw}(F_{\infty}, T) ,\]
which in turn yields the following short exact sequence
\[ 0\lra H^1_{\Iw}(F_{\infty}, T)_{\Gamma_n}\lra H^1(G_{\Sigma}(F_{n}),T) \lra H^2_{\Iw}(F_{\infty}, T)^{\Gamma_n}\lra 0.\]

Furthermore,  the commutative diagram
\[   \entrymodifiers={!! <0pt, .8ex>+} \SelectTips{eu}{}\xymatrix{
    0 \ar[r]^{} & \LL^{\iota}\otimes_{\Zp} T \ar[d]_{1+\gamma_{n}+\cdots +\gamma_{n}^{p-1}} \ar[r]^{\gamma_{n+1}-1} &  \LL^{\iota}\otimes_{\Zp} T
    \ar@{=}[d]^{} \ar[r]^(){} & \Zp[\Gal(F_{n+1}/F)]\otimes_{\Zp} T\ar[d]^{\pr} \ar[r] &0\\
    0 \ar[r]^{} & \LL^{\iota}\otimes_{\Zp} T \ar[r]^{^{\gamma_{n}-1}} & 
    \LL^{\iota}\otimes_{\Zp}T \ar[r] & \Zp[\Gal(F_{n}/F)]\otimes_{\Zp} T \ar[r] &0 } \]
 induces the commutative diagram
\[   \xymatrixrowsep{0.25in}
\xymatrixcolsep{0.15in}\entrymodifiers={!! <0pt, .8ex>+} \SelectTips{eu}{}\xymatrix{
    0 \ar[r]^{} & H^1_{\Iw}(F_{\infty}, T)_{\Gamma_{n+1}} \ar[d] \ar[r] &  H^1(G_{\Sigma}(F_{n+1}),T)
    \ar[d]^{\cor} \ar[r]^(){} & H^2_{\Iw}(F_{\infty}, T)^{\Gamma_{n+1}}\ar[d]^{1+\gamma_{n}+\cdots +\gamma_{n}^{p-1}} \ar[r]^{} &0\\
    0 \ar[r]^{} & H^1_{\Iw}(F_{\infty}, T)_{\Gamma_{n}}\ar[r]^{} & H^1(G_{\Sigma}(F_{n}),T) \ar[r] &H^2_{\Iw}(F_{\infty}, T)^{\Gamma_{n}} \ar[r] &0 } \]
    with exact rows. 
    
For each $n$, we also have the following commutative diagram
\[  \entrymodifiers={!! <0pt, .8ex>+} \SelectTips{eu}{}\xymatrix{
    H^i(G_{\Sigma}(F), \Zp[\Gal(F_{n}/F)]\otimes_{\Zp} T) \ar[d]_{1+\gamma_{n}+\cdots +\gamma_{n}^{p-1}} \ar[r]^(0.62){sh}_(0.62){\sim} & H^i(G_{\Sigma}(F_{n}),T) \ar[d]_{\res}  \\
   H^i(G_{\Sigma}(F), \Zp[\Gal(F_{n+1}/F)]\otimes_{\Zp} T)  \ar[r]^(0.6){sh}_(0.6){\sim} &  H^i(G_{\Sigma}(F_{n+1}),T) } \]
  and the following commutative diagram
\[   \entrymodifiers={!! <0pt, .8ex>+} \SelectTips{eu}{}\xymatrix{
    0 \ar[r]^{} & \LL^{\iota}\otimes_{\Zp} T \ar@{=}[d] \ar[r]^{\gamma_{n}-1} & 
    \LL^{\iota}\otimes_{\Zp} T
    \ar[d]^{1+\gamma_{n}+\cdots +\gamma_{n}^{p-1}} \ar[r]^(){} & \Zp[\Gal(F_{n}/F)]\otimes_{\Zp} T\ar[d]^{1+\gamma_{n}+\cdots +\gamma_{n}^{p-1}} \ar[r] &0\\
    0 \ar[r]^{} & \LL^{\iota}\otimes_{\Zp} T \ar[r]^{^{\gamma_{n+1}-1}} & 
    \LL^{\iota}\otimes_{\Zp}T \ar[r] & \Zp[\Gal(F_{n+1}/F)]\otimes_{\Zp} T \ar[r] &0. } \]
This gives     the commutative diagram
\[   \xymatrixrowsep{0.25in}
\xymatrixcolsep{0.15in}\entrymodifiers={!! <0pt, .8ex>+} \SelectTips{eu}{}\xymatrix{
    0 \ar[r]^{} & H^1_{\Iw}(F_{\infty}, T)_{\Gamma_{n}} \ar[d]^{1+\gamma_{n}+\cdots +\gamma_{n}^{p-1}} \ar[r] &  H^1(G_{\Sigma}(F_{n}),T)
    \ar[d]^{\res} \ar[r]^(){} & H^2_{\Iw}(F_{\infty}, T)^{\Gamma_{n}}\ar[d]^{\subseteq} \ar[r]^{} &0\\
    0 \ar[r]^{} & H^1_{\Iw}(F_{\infty}, T)_{\Gamma_{n+1}}\ar[r]^{} & H^1(G_{\Sigma}(F_{n+1}),T) \ar[r] &H^2_{\Iw}(F_{\infty}, T)^{\Gamma_{n+1}} \ar[r] &0 } \]
with exact rows.

Applying the snake lemma to the following commutative diagram
\[   \xymatrixrowsep{0.25in}
\xymatrixcolsep{0.15in}\entrymodifiers={!! <0pt, .8ex>+} \SelectTips{eu}{}\xymatrix{
    0 \ar[r]^{} & H^1_{\Iw}(F_{\infty}, T)_{\Gamma_{n}} \ar[d] \ar[r] &  H^1(G_{\Sigma}(F_{n}),T)
    \ar[d]^{} \ar[r]^(){} & H^2_{\Iw}(F_{\infty}, T)^{\Gamma_{n}} \ar[r] &0\\
    & \bigoplus_{v|p}H^1_{/f}(F_{n,w},T) \ar@{=}[r] & \bigoplus_{v|p}H^1_{/f}(F_{n,w},T)& &  } \]
    yields
    \[H^2_{\Iw}(F_{\infty}, T)^{\Gamma_{n}}\stackrel{\partial_n}{\lra} \mathcal{Y}'(E/F_n)\lra \mathcal{Y}(E/F_n)\lra 0. \]
    Denote by $Z'_n$ and $Z_n''$ the kernel and image of $\partial_n$. It is straightforward to check that the preceding diagram is compatible in $n$, going from $n$ to $n+1$.
    By Lemma \ref{lemma:NSWfg}, $H^2_{\Iw}(F_{\infty}, T)^{\Gamma_{n}}$ stabilizes for $n\gg 0$ and so one has $Z'_n\subseteq Z_n''$ for $n\gg 0$. Since
    $H^2_{\Iw}(F_{\infty}, T)^\delta$ is finitely generated over $\Zp$, it follows from the Noetherian property that $Z_n'$ stabilizes for $n\gg 0$. Hence so does $Z_n''$. Now going from $n+1$ to $n$, it follows from Lemma \ref{lemma:NSWfg} that the transition maps on $H^2_{\Iw}(F_{\infty}, T)^{\Gamma_{n}}$ is given by multiplication of $p$ for $n\gg 0$. Therefore, the transition maps on $Z'_n$ is given by $p$ for $n\gg 0$, and hence the same can be said for $Z_n''$. We may now apply \cite[Lemma 10.4(ii)]{kobayashi03} to conclude that $\nabla Z''_n=0$ for $n\gg 0$. Consequently, we have $\nabla \mathcal{Y}'(E/F_n)= \nabla\mathcal{Y}(E/F_n)$ for $n\gg 0$ and this finishes the proof of the proposition.
    \end{proof}

\section{Local analysis via Coleman maps}\label{S:local}
As remarked in the previous section, we shall need to understand the growth of $\cY(E/F_n)$, or equivalently, the growth of $\cY'(E/F_n)$ thanks to Proposition~\ref{prop:equivY}. We shall study the image of  $\HIw(F_\infty,T)$ in the quotient $H^1_{/f}(F_{n,w},T)$, where $w\in \Sss$, via Coleman maps. We shall consider the supersingular case and the ordinary case separately.

\subsection{The supersingular case}
Throughout this section. We fix $w\in\Sss$, which lies above $v\in\Sssp$. We shall write $\sigma_w$ for the Frobenius element of $\Gal(F_w/K_v)$. For $n\ge1$, we define
\[
H_{v,n}=C_{v,n}\cdots C_{v,1},
\]
where the matrices $C_{v,i}$ are defined as in \eqref{eq:defnC}.
We write $H_{v,n}^\sharp$ and $H_{v,n}^\flat$ for the entries of the first row of the matrix $H_{v,n}$. Then we see that $H_{v,n}$ is of the form
\begin{equation}\label{eq:Hvn}
    H_{v,n}=\begin{pmatrix}
H_{v,n}^\sharp&H_{v,n}^\flat\\
-\Phi_nH_{v,n-1}^\sharp&-\Phi_nH_{v,n-1}^\flat\end{pmatrix}.
\end{equation}

\begin{lemma}
Let $z\in \HIw(F_{w},T)$ and  $\theta$ a character on $\Gamma$ of conductor $p^{n+1}>1$ (so that it factors through $\Gamma_n$ but not $\Gamma_{n-1}$). Let $e_\theta$ denote the idempotent associated to $\theta$. Then the image of $z$ in $e_\theta\cdot H^1_{/f}(F_{n,w},T)$ is zero if and only if 
\[
H_{v,n}^\sharp\col_{\sharp,w}(z)+H_{v,n}^\flat\col_{\flat,w}(z)\in\Lambda_{\cO_w}
\]
vanishes at $\theta$.
\end{lemma}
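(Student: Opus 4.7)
The plan is to compute $\cL_w(z)(\theta)$ explicitly via the factorization \eqref{eq:decomposePR}, using the special shape of $H_{v,n}$ at $\theta$, and then apply the interpolation property of Perrin-Riou's map to convert the vanishing of $\cL_w(z)(\theta)$ into the vanishing of the image in $e_\theta \cdot H^1_{/f}(F_{n,w},T)$.

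First I would extract a clean formula for $\cL_w(z)(\theta)$. The matrix $\Mlogv$ is by construction the limit of $M_{v,n}=A_v^{n+1}H_{v,n}$, and the convergence argument of \cite[Lemma~4.4]{sprung17} ensures that $\Mlogv(\theta)=M_{v,n}(\theta)$ for $\theta$ of conductor dividing $p^{n+1}$. Since $\theta$ has exact order $p^n$, we have $\Phi_n(\theta)=0$, so the second row of $H_{v,n}$ displayed in \eqref{eq:Hvn} vanishes identically at $\theta$. Consequently
\[
H_{v,n}(\theta)\begin{pmatrix}\colsw(z)(\theta)\\ \colfw(z)(\theta)\end{pmatrix} = \begin{pmatrix}a(\theta)\\ 0\end{pmatrix},
\]
where $a:=H_{v,n}^\sharp\colsw(z)+H_{v,n}^\flat\colfw(z)\in\Lambda_{\cO_w}$. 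Combining this with the identity $(\omega_v,\vp(\omega_v))A_v^{n+1}=(\vp^{n+1}(\omega_v),\vp^{n+2}(\omega_v))$, which is immediate from the fact that $A_v$ is the matrix of $\vp$ in the basis $(\omega_v,\vp(\omega_v))$, yields
\[
\cL_w(z)(\theta) \;=\; a(\theta)\cdot\vp^{n+1}(\omega_v) \quad\text{in } \Dcrisw\otimes\Qp(\theta).
\]
Since $\vp$ acts invertibly on $\Dcrisw\otimes\Qp$ (its characteristic polynomial is $X^2-\tfrac{a_v}{p}X+\tfrac{1}{p}$), $\vp^{n+1}(\omega_v)$ is nonzero, hence $\cL_w(z)(\theta)=0$ if and only if $a(\theta)=0$.

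Second, I would appeal to the interpolation property of Perrin-Riou's big logarithm (cf. \cite[Theorem~B.5]{loefflerzerbes11}): for $\theta$ of conductor $p^{n+1}>1$, the value $\cL_w(z)(\theta)$ coincides, up to a nonzero factor (a Gauss sum times $\vp^{-(n+1)}$), with the image under the Bloch--Kato dual exponential map $\exp^*_{F_{n,w}}$ of the $\theta$-isotypic component of the image $z_n$ of $z$ at level $n$. Since $\exp^*$ detects precisely the quotient $H^1_{/f}(F_{n,w},T)$ (cf. the isomorphism \eqref{eq:dualexp} and the discussion around it), the vanishing of $\cL_w(z)(\theta)$ is equivalent to the vanishing of the image of $z$ in $e_\theta\cdot H^1_{/f}(F_{n,w},T)$. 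Chaining the two equivalences gives the lemma.

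The main obstacle is the second step: matching the integral formulation of the lemma with the interpolation theorem, which is stated naturally at the $\Qp$-linear level. One must verify that the Gauss sum and $\vp^{-(n+1)}$ factors do not introduce any obstruction to upgrading the equivalence to one about vanishing in the integral module $e_\theta\cdot H^1_{/f}(F_{n,w},T)$. This is a matter of careful bookkeeping rather than new ideas, since these factors are invertible on the relevant $\cO_w[\theta]$-modules.
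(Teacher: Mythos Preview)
Your argument is correct and is precisely the standard route to this kind of statement; the paper itself does not supply a proof but simply cites \cite[Proposition~5.1]{LS19}, whose proof proceeds along the same lines you sketch. Two minor comments: your claim ``$\Mlogv(\theta)=M_{v,n}(\theta)$ for $\theta$ of conductor dividing $p^{n+1}$'' should read ``of conductor exactly $p^{n+1}$'' (the computation uses $\Phi_i(\theta)=p$ for all $i>n$, which holds when $\theta$ has order $p^n$), and your closing worry about integrality is moot, since the idempotent $e_\theta$ already forces one to work after inverting $p$, so $e_\theta\cdot H^1_{/f}(F_{n,w},T)$ is a $\Qp(\theta)$-vector space and the invertible Gauss-sum and $\vp^{-(n+1)}$ factors cause no trouble.
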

\begin{proof}
This is proven in \cite[Proposition~5.1]{LS19}.
\end{proof}

\begin{defn}
Let $I_v$ be as defined in Proposition~\ref{prop:image}. For $n\ge1$ and $u\in\Zp^\times$, we define
\begin{align*}
    \hvnu:I_v&\rightarrow\Lambda_n\\
    (G_1,G_2)&\mapsto H^\sharp_{v,n}G_1+ uH^\flat_{v,n}G_2\mod \omega_n,
\end{align*}
and
\begin{align*}
    \colvnu:\HIw(F_{\infty,v},T)\rightarrow \Lambda_n
\end{align*}
by $\hvnu\circ (\colsv,\colfv)$ (which makes sense thanks to Proposition~\ref{prop:image}). We define $$\colwnu:\HIw(F_{\infty,w},T)\rightarrow\Lambda_n\otimes\cO_w$$ similarly.
\end{defn}

Note that $\colwnu$ factors through $\HIw(F_{\infty,w},T)_{\Gamma_n}=H^1(F_{n,w},T)$ (see \cite[proof of Proposition 3.11]{LLZ17}). In fact, one can do better and this is the content of the next lemma.

\begin{lemma}\label{lem:prelim}Let $n\ge1$. We have:
\begin{itemize}
    \item[(a)] $\image\left(\hvnu\right)\supset\omega_{n-1}\Lambda_n$;
    \item[(b)] There exists $u\in\Zp^\times$ such that $\colwnu$ induces an injection
    \[H^1_{/f}(F_{n,w},T)\hookrightarrow \Lambda_n\otimes\cO_w\]
    with finite cokernel. In particular, it is an isomorphism after tensoring by $\Qp$.
\end{itemize}
\end{lemma}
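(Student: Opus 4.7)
For part (a), the plan is a direct determinantal calculation. Since $\det C_{v,i} = \Phi_i$, we have $\det H_{v,n} = \prod_{i=1}^n \Phi_i = \omega_n/X$. Expanding $\det H_{v,n}$ using the explicit shape in \eqref{eq:Hvn} gives
\[
\Phi_n\bigl(H^\flat_{v,n} H^\sharp_{v,n-1} - H^\sharp_{v,n} H^\flat_{v,n-1}\bigr) = \omega_n/X,
\]
so that $X\cdot H^\flat_{v,n} H^\sharp_{v,n-1} - X\cdot H^\sharp_{v,n} H^\flat_{v,n-1} = \omega_{n-1}$. Setting $G_1 := -X H^\flat_{v,n-1}$ and $G_2 := u^{-1} X H^\sharp_{v,n-1}$ (valid as $u^{-1} \in \Zp$), the pair $(G_1, G_2)$ lies in $I_v$ since both entries vanish at $X = 0$, and a direct computation gives $\hvnu(G_1, G_2) = \omega_{n-1}$ in $\Lambda_n$. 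Since $I_v$ is a $\Lambda$-submodule of $\Lambda^{\oplus 2}$ and $\hvnu$ is $\Lambda$-linear, we conclude $\omega_{n-1}\Lambda_n \subset \image(\hvnu)$.

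For part (b), the plan first passes from Iwasawa cohomology to $H^1(F_{n,w}, T)$. In the supersingular setting, $H^0(F_{\infty,w}, \Ep) = 0$ (as used in the proof of Lemma~\ref{lem:surjcor}); local Tate duality then gives $H^2_{\Iw}(F_{\infty,w}, T) = 0$. The low-degree exact sequence from Jannsen's spectral sequence (cf. the proof of Proposition~\ref{prop:leopoldt}) then yields $\HIw(F_{\infty,w}, T)_{\Gamma_n} \cong H^1(F_{n,w}, T)$, so $\colwnu$ descends to a map $H^1(F_{n,w}, T) \to \Lambda_n \otimes \cO_w$. A standard rank calculation using $\Ep(F_{n,w}) = 0$ shows $H^1_{/f}(F_{n,w}, T)$ is $\Zp$-free of rank $[F_{n,w}:\Qp] = p^n[F_w:\Qp]$, matching $\rank_{\Zp}(\Lambda_n \otimes \cO_w)$.

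The analytic heart of the argument is choosing $u \in \Zp^\times$ so that $\colwnu$ kills the Kummer image $E(F_{n,w}) \otimes \Zp$. The Perrin--Riou interpolation formula expresses $\cL_w(z)(\theta)$, at a character $\theta$ of $\Gamma$ of conductor $p^{m+1}$ with $0 \le m \le n$, as a scalar multiple of the Bloch--Kato dual exponential of the $\theta$-component of $z_m$, which vanishes on Kummer classes. Using the compatibility $M_{v,n}(\theta) = M_{v,m}(\theta)$ for such $\theta$ (which follows from $C_{v,k}(\theta) = A_v^{-1}$ whenever $k > m$), together with the factorization $\cL_w = (\omega_v, \vp(\omega_v)) \Mlogv (\colsw, \colfw)^T$, one identifies a $u$ for which the combination $H^\sharp_{v,n}\colsw + u H^\flat_{v,n}\colfw \bmod \omega_n$ picks out exactly the dual-exponential content at every such $\theta$. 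Particular care is needed at the trivial character, where the interpolation contains an additional $(1-\vp)(1-p^{-1}\vp^{-1})^{-1}$ factor; here $u$ is adjusted so that $H^\sharp_{v,n}(0) + u H^\flat_{v,n}(0)\tfrac{p-1}{2-a_v}$ (invoking the relation from Proposition~\ref{prop:image}) is nonzero.

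With this choice, $\colwnu$ descends further to $H^1_{/f}(F_{n,w}, T) \to \Lambda_n \otimes \cO_w$, a $\Zp$-linear map between free modules of equal rank. Injectivity holds because the kernel on $H^1(F_{n,w}, T)$ coincides with the Kummer image by the character-by-character analysis above; finite cokernel then follows from equality of ranks. The main obstacle will be the selection of $u$: one must verify that a single unit $u \in \Zp^\times$ simultaneously ensures the vanishing-on-Kummer at every intermediate level $m \le n$ and the nondegeneracy at the trivial character, rather than requiring mutually incompatible choices.
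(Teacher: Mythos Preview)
Your proof of part (a) is correct and essentially identical to the paper's: both compute $\det H_{v,n}=\omega_n/X$, extract the relation $-H_{v,n}^\sharp H_{v,n-1}^\flat + H_{v,n}^\flat H_{v,n-1}^\sharp=\omega_{n-1}/X$ from the shape \eqref{eq:Hvn}, and then observe that the pair $(-X H_{v,n-1}^\flat,\,u^{-1}X H_{v,n-1}^\sharp)\in I_v$ maps to $\omega_{n-1}$ under $\hvnu$.

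For part (b), the paper does not give an argument at all; it simply invokes \cite[Proposition~3.11]{LLZ17}. Your sketch goes further by outlining what such a proof should look like: descent from $\HIw$ to $H^1(F_{n,w},T)$, a rank count, and a character-by-character analysis of the Perrin--Riou interpolation to pin down $u$. This is the right shape of argument and is, in broad strokes, what the cited reference carries out. However, as you yourself flag, the crux is the existence of a \emph{single} $u\in\Zp^\times$ that works uniformly at every character of conductor dividing $p^{n+1}$, including the trivial one; your sketch asserts this but does not establish it. In particular, the claim that ``the kernel on $H^1(F_{n,w},T)$ coincides with the Kummer image'' requires a careful comparison of $\colwnu$ with the dual exponential at every level simultaneously, and the trivial-character adjustment you describe is only a necessary condition, not obviously sufficient for the kernel equality. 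So your sketch identifies the right ingredients but stops short of a proof; the paper's approach is simply to outsource this to the literature.
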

\begin{proof}
We can calculate explicitly that $\det(H_{v,n})=\omega_n/X$. Thus, \eqref{eq:Hvn} tells us that
\[
-H_{v,n}^\sharp H_{v,n-1}^\flat+H_{v,n}^\flat H_{v,n-1}^\sharp=\omega_{n-1}/X.
\]
Since $(-XH_{v,n-1}^\flat, u^{-1}XH_{v,n-1}^\sharp)\in I_v$, it is mapped to $\omega_{n-1}$ under $\hvnu$. This proves (a).
Part (b) is \cite[Proposition~3.11]{LLZ17}.
\end{proof}

\begin{corollary}\label{cor:comparepi}
Let $n\ge1$, $M$  a $\Lambda$-submodule of $H^1_{/f}(F_{n,w},T)$ and $u\in\Zp^\times$ satisfying Lemma~\ref{lem:prelim}(b). Consider the following natural projections
\begin{align*}
    \pi:&\frac{H^1_{/f}(F_{n,w},T)}{M}\rightarrow \frac{H^1_{/f}(F_{n-1,w},T)}{M_{\Gamma_{n-1}}},\\
\pi':&\frac{\Lambda_n\otimes\cO_w}{\colwnu(M)}\rightarrow \frac{\Lambda_{n-1}\otimes\cO_w}{\colwnu(M)_{\Gamma_{n-1}}}.
\end{align*}
Then one has $\ker\pi\cong \ker\pi'$ as $\Zp$-modules.
\end{corollary}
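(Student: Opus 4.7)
The plan is to transfer the problem from $A_n := H^1_{/f}(F_{n,w},T)$ to $B_n := \Lambda_n\otimes\cO_w$ via the injection $\iota_n := \colwnu$ provided by Lemma~\ref{lem:prelim}(b), and then compare the two kernels at the level of $B_n$.

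First I would rewrite both kernels in a parallel fashion. Let $\pi_n : A_n \to A_{n-1}$ denote the corestriction and $\pr : B_n \to B_{n-1}$ the natural projection, whose kernel is $\omega_{n-1}\Lambda_n\otimes\cO_w$. Elementary manipulations of preimages give
\[
\ker\pi \cong \ker\pi_n/(M\cap\ker\pi_n) \quad\text{and}\quad \ker\pi' \cong \ker(\pr)/\bigl(\colwnu(M)\cap\ker(\pr)\bigr),
\]
since $\pi_n^{-1}(\pi_n(M)) = M + \ker\pi_n$ and analogously for $\pi'$.

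The main step is to prove that $\iota_n$ restricts to an isomorphism $\ker\pi_n \stackrel{\sim}{\to} \ker(\pr)$. Injectivity is inherited from $\iota_n$ itself. For the inclusion $\iota_n(\ker\pi_n)\subseteq\ker(\pr)$ I would invoke the commutative square $\pr\circ\iota_n=\iota_{n-1}\circ\pi_n$ coming from the compatibility of Coleman maps with corestriction. For the reverse inclusion, the decisive ingredient is Lemma~\ref{lem:prelim}(a), which guarantees $\iota_n(A_n)=\image(\colwnu)\supseteq \omega_{n-1}\Lambda_n\otimes\cO_w=\ker(\pr)$; hence any $k\in\ker(\pr)$ lifts uniquely to some $a\in A_n$ with $\iota_n(a)=k$, and then $\iota_{n-1}(\pi_n(a))=\pr(k)=0$ combined with the injectivity of $\iota_{n-1}$ forces $a\in\ker\pi_n$. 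A rank sanity check, $\rank_{\Zp}\ker\pi_n=\phi(p^n)[F_w:\Qp]=\rank_{\Zp}\ker(\pr)$, is consistent with this being an isomorphism rather than just a finite-index inclusion.

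Finally, using that $\iota_n$ is injective and $\colwnu(M)=\iota_n(M)$, one checks that $\iota_n(M\cap\ker\pi_n)=\iota_n(M)\cap\iota_n(\ker\pi_n)=\colwnu(M)\cap\ker(\pr)$, so the isomorphism $\ker\pi_n\stackrel{\sim}{\to}\ker(\pr)$ descends to the desired isomorphism $\ker\pi\stackrel{\sim}{\to}\ker\pi'$ of $\Zp$-modules. The main obstacle I expect is verifying the commutative square $\pr\circ\iota_n=\iota_{n-1}\circ\pi_n$, since a direct comparison of the defining formulas for $\colwnu$ and $\col^u_{w,n-1}$ requires bookkeeping of how the level-$n$ and level-$(n-1)$ logarithmic matrix entries $H^\sharp_{v,\bullet}$ and $H^\flat_{v,\bullet}$ interact modulo $\omega_{n-1}$, together with ensuring that the fixed $u$ also makes $\iota_{n-1}$ injective with finite cokernel.
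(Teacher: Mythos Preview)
Your element-wise argument is correct and is essentially the paper's proof unpacked: the paper applies the snake lemma to the commutative diagram with rows
\[
0 \longrightarrow \frac{H^1_{/f}(F_{n,w},T)}{M} \longrightarrow \frac{\Lambda_n\otimes\cO_w}{\colwnu(M)} \longrightarrow \frac{\Lambda_n\otimes\cO_w}{\image\colwnu} \longrightarrow 0
\]
(and the analogous row at level $n-1$), uses the surjectivity of $\pi$ from Lemma~\ref{lem:surjcor}, and shows $\ker\pi''=0$ via the inclusion $\image\colwnu\supset\omega_{n-1}\Lambda_n\otimes\cO_w$ coming from Lemma~\ref{lem:prelim}(a) together with Proposition~\ref{prop:image}. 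That last step is exactly your ``reverse inclusion'' argument, and your identities $\ker\pi\cong\ker\pi_n/(M\cap\ker\pi_n)$, etc., are what the snake lemma is computing behind the scenes.

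Your closing worry, however, is aimed at the wrong target. The paper does \emph{not} put $\col^u_{w,n-1}$ on the bottom; it declares explicitly (with an ``abuse of notation'') that the map $H^1_{/f}(F_{n-1,w},T)\to\Lambda_{n-1}\otimes\cO_w$ in the bottom row is $\colwnu\bmod\omega_{n-1}$. With this choice the square $\pr\circ\iota_n=\iota_{n-1}\circ\pi_n$ commutes tautologically: there is no bookkeeping between $H^{\sharp/\flat}_{v,n}$ and $H^{\sharp/\flat}_{v,n-1}$, and no separate choice of $u$ at level $n-1$ to worry about. The ``injectivity of $\iota_{n-1}$'' that you invoke then becomes the injectivity of $\colwnu\bmod\omega_{n-1}$ on $H^1_{/f}(F_{n-1,w},T)$, which is precisely what the paper encodes by writing the bottom row as a short exact sequence beginning with $0$.
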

\begin{proof}
We abuse notation writing $\colwnu$ for the map 
$$H^1(F_{n-1,w},T)\rightarrow \Lambda_{n-1}\otimes\cO_w$$
defined by $\colwnu \mod\omega_{n-1}$. Consider the following commutative diagram:
\[
    \xymatrix{0\ar[r] & \frac{H^1_{/f}(F_{n,w},T)}{M}\ar[r]\ar[d]^{\pi}&\frac{\Lambda_n\otimes\cO_w}{\colwnu(M)}\ar[r]\ar[d]^{\pi'}&\frac{\Lambda_n\otimes\cO_w}{\colwnu(H^1_{/f}(F_{n,w},T))}\ar[r]\ar[d]^{\pi''}&0\\
         0\ar[r] & \frac{H^1_{/f}(F_{n-1,w},T)}{M_{\Gamma_{n-1}}}\ar[r]&\frac{\Lambda_{n-1}\otimes\cO_w}{\colwnu(M)_{\Gamma_{n-1}}}\ar[r]&\frac{\Lambda_{n-1}\otimes\cO_w}{\colwnu(H^1_{/f}(F_{n-1,w},T))}\ar[r]&0}.
\]
Recall from Lemma~\ref{lem:surjcor} that $\pi$ is surjective.
The snake lemma then gives the following short exact sequence:
\[
0\rightarrow\ker\pi\rightarrow\ker\pi'\rightarrow\ker\pi''\rightarrow 0.
\]
Therefore, the corollary would follow from showing that $\ker\pi''$ is { trivial}. 
Lemma~\ref{lem:prelim}(a) tells us that 
\[
\frac{\Lambda_n\otimes \cO_w}{\image h^u_{w,n}}\hookrightarrow \frac{\Lambda_{n-1}\otimes \cO_w}{\left(\image h^u_{w,n}\right)_{\Gamma_{n-1}}}.
\]
Since $\colwnu(H^1_{/f}(F_{n,w},T))= \image h^u_{w,n}$ by Proposition~\ref{prop:image},  the result follows.
\end{proof}

\begin{corollary}\label{cor:nablass}
Let  $z\in\HIw(F_{\infty,w},T)$ and  $u\in\Zp^\times$  a constant satisfying Lemma~\ref{lem:prelim}(b). For $n\ge1$, write $M_n$ for the $\Lambda_{\cO_w}$-module generated by the natural image of $z$ in  $H^1_{/f}(F_{n,w},T)$. Suppose that the natural projection \[
    \pi:\frac{H^1_{/f}(F_{n,w},T)}{M_n}\rightarrow \frac{H^1_{/f}(F_{n-1,w},T)}{M_{n-1}}
\]
has finite kernel for some $n$. Then,
\[
\nabla\left( \frac{H^1_{/f}(F_{n,w},T)}{M_n}\right)=[F_w:K_v]\cdot\ord_{\epsilon_n} \colwnu(z)(\epsilon_n).
\]
\end{corollary}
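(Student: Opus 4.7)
The plan is to transfer the computation from the cohomology side to the Iwasawa-algebra side via the Coleman map $\colwnu$, and then extract the answer from Proposition~\ref{prop:nablawithcoeff}. The cyclic description $M_n = \Lambda_{\cO_w}\cdot[z]_n$ makes $\colwnu(M_n)$ the principal ideal of $\Lambda_n\otimes\cO_w$ generated by $\colwnu(z)$, so a choice of lift $f\in\Lambda_{\cO_w}$ of $\colwnu(z)$ gives an identification
\[
\frac{\Lambda_n\otimes\cO_w}{\colwnu(M_n)} \;\cong\; \Lambda_{\cO_w}/(f,\omega_n),
\]
placing us exactly in the setting of Proposition~\ref{prop:nablawithcoeff} with $\cO=\cO_w$ and $k=[F_w{:}\Qp]=[F_w{:}K_v]$ (using $K_v=\Qp$ from (S1)).

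Unpacking the definition of $\nabla$, one has $\coker\pi=0$ by Lemma~\ref{lem:surjcor}, so it remains to compute $\length_{\Zp}\ker\pi$ and $\rank_{\Zp}\bigl(H^1_{/f}(F_{n-1,w},T)/M_{n-1}\bigr)$. For the first, I would apply Corollary~\ref{cor:comparepi} with $M=M_n$ (after noting that $(M_n)_{\Gamma_{n-1}}=M_{n-1}$, which follows from the cyclic presentation of $M_n$ and the naturality of corestriction) to identify $\ker\pi\cong\ker\pi'$, where $\pi'$ is the reduction map $\Lambda_{\cO_w}/(f,\omega_n)\to\Lambda_{\cO_w}/(f,\omega_{n-1})$. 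The finite-kernel hypothesis then forces $\Phi_n\nmid f$, and Proposition~\ref{prop:nablawithcoeff} yields $\length_{\Zp}\ker\pi = [F_w{:}\Qp]\cdot\ord_{\epsilon_n}g(\epsilon_n)$, where $f=gh$ with $h=\gcd(f,\omega_{n-1})$.

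For the rank term, the bottom row of the snake-lemma diagram in the proof of Corollary~\ref{cor:comparepi} provides a short exact sequence
\[
0\to H^1_{/f}(F_{n-1,w},T)/M_{n-1}\to \Lambda_{\cO_w}/(f,\omega_{n-1})\to Q_n\to 0,
\]
where $Q_n$ is a quotient of $\Lambda_n\otimes\cO_w/\colwnu(H^1_{/f}(F_{n,w},T))$, hence finite by Lemma~\ref{lem:prelim}(b). Taking $\Zp$-ranks and applying Proposition~\ref{prop:nablawithcoeff} to $\Lambda_{\cO_w}/(f,\omega_{n-1})$ gives $\rank_{\Zp}\bigl(H^1_{/f}(F_{n-1,w},T)/M_{n-1}\bigr)=[F_w{:}\Qp]\cdot\ord_{\epsilon_n}h(\epsilon_n)$. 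Adding the two contributions and using $\ord_{\epsilon_n}f(\epsilon_n)=\ord_{\epsilon_n}g(\epsilon_n)+\ord_{\epsilon_n}h(\epsilon_n)$ produces the desired formula.

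The main technical point I anticipate is checking $(M_n)_{\Gamma_{n-1}}=M_{n-1}$ so that Corollary~\ref{cor:comparepi} applies with our $\pi$ in the form needed. This should follow from $M_n=\Lambda_{\cO_w}\cdot[z]_n$ together with the compatibility of corestriction with the $\Lambda$-action: corestriction sends $[z]_n$ to $[z]_{n-1}$ and factors through $\Gamma_{n-1}$-coinvariants, while $(\Lambda_n\otimes\cO_w)_{\Gamma_{n-1}}\cong\Lambda_{n-1}\otimes\cO_w$. Everything else is routine once this identification is in place.
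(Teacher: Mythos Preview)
Your proposal is correct and follows essentially the same approach as the paper: use Lemma~\ref{lem:surjcor} for surjectivity of $\pi$, transfer $\ker\pi$ to $\ker\pi'$ via Corollary~\ref{cor:comparepi}, compute the rank term through Lemma~\ref{lem:prelim}(b), and extract both numerical contributions from Proposition~\ref{prop:nablawithcoeff}. The paper is slightly terser---it invokes Lemma~\ref{lem:prelim}(b) directly for the dimension of $H^1_{/f}(F_{n-1,w},T)/M_{n-1}$ rather than writing out the short exact sequence, and it does not spell out the identification $(M_n)_{\Gamma_{n-1}}=M_{n-1}$ or the implication $\Phi_n\nmid f$---but these are exactly the points you make explicit, and the arguments coincide.
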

\begin{proof}
Since the projection $H^1_{/f}(F_{n,w},T)\rightarrow H^1_{/f}(F_{n-1,w},T)$ is surjective by Lemma~\ref{lem:surjcor}, we have
\[
\nabla\left(\frac{H^1_{/f}(F_n,T)}{M_n }\right)=\dim_{\Qp}\frac{H^1_{/f}(F_{n-1},T)}{M_{n-1} }+\length_{\Zp}\ker\pi.
\]
Let $\colwnu(z)=g_wh_w$, where $h_w=\gcd(\omega_{n-1},f_w)$. We deduce from Lemma~\ref{lem:prelim}(b) and Proposition~\ref{prop:nablawithcoeff} that
\[
\dim_{\Qp}\frac{H^1_{/f}(F_{n-1},T)}{M_{n-1} }=[F_w:K_v]\cdot \ord_{\epsilon_n}h_w(\epsilon_n).
\]
Applying Corollary~\ref{cor:comparepi}, we have
\[
\length_{\Zp}\ker\pi=[F_w:K_v]\cdot \ord_{\epsilon_n}g_w(\epsilon_n).
\]
Thus, putting everything together, we conclude that
\[
\nabla\left(\frac{H^1_{/f}(F_n,T)}{M_n }\right)=[F_w:K_v]\cdot(\ord_{\epsilon_n}h_w(\epsilon_n)+ \ord_{\epsilon_n}g_w(\epsilon_n))=[F_w:K_v]\cdot\ord_{\epsilon_n}\colwnu(z)(\epsilon_n)
\]
as required.
\end{proof}

We now explain how to calculate $\ord_{\epsilon_n} \colwnu(z)(\epsilon_n)$. 
In what follows, $\ord_p$ denotes the $p$-adic valuation on $\overline{\Qp}$ with $\ord_p(p) =1$.
{ Following \cite[Definition 4.4]{sprung13}, for a $2\times 2$ matrix $A=\begin{pmatrix} a & b \\ c & d \end{pmatrix}$ defined over $\overline{\Qp}$, we write
\[ \ord_p (A) = \begin{pmatrix} \ord_p(a) & \ord_p(b) \\ \ord_p(c) & \ord_p(d) \end{pmatrix}.\] }

\begin{proposition}\label{prop:evaluateH}
 Let $v\in \Sssp$ and write $r_v=\ord_p(a_v)\in\{1,\infty\}$ (thanks to the Weil's bound). For all $n\ge1$,  $$\ord_p\left(H_{v,n}(\epsilon_n)\right)=
\begin{cases}
\begin{pmatrix}
r_v+\sum_{i=1}^{\frac{n-1}{2}}\frac{1}{p^{2i}}&\sum_{i=1}^{\frac{n-1}{2}}\frac{1}{p^{2i-1}}\\
\infty&\infty
\end{pmatrix}
&\text{if $n$ is odd.}\\
\begin{pmatrix}
\sum_{i=1}^{\frac{n}{2}}\frac{1}{p^{2i-1}}&r_v+\sum_{i=1}^{\frac{n}{2}-1}\frac{1}{p^{2i}}\\
\infty&\infty
\end{pmatrix}&
\text{if $n$ is even.}
\end{cases}
$$
Let $z\in \HIw(F_{\infty,w},T)$ such that $\colwnu(z)(\epsilon_n)\ne 0$, then
\[
\ord_{\epsilon_n}\colwnu(z)(\epsilon_n)=
\ord_{\epsilon_n}\left(H_{v,n}^{\delta(w,n)}\col_{\delta(w,n),w}(z)(\epsilon_n)\right)
\]
where $\delta(w,n)\in\{\sharp,\flat\}$ depends on  the parity of $n$ once $w$ is fixed.
\end{proposition}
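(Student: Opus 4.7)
The plan is to handle this in two stages: first nail down the precise $p$-adic valuations of the entries of $H_{v,n}(\epsilon_n)$ by an inductive matrix computation, then exploit the resulting asymmetry to identify the dominant summand of $\colwnu(z)(\epsilon_n)$.

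For the first stage, I would start from the observation that $\omega_n(\epsilon_n)=0$ while $\omega_{n-1}(\epsilon_n)\neq 0$, so $\Phi_n(\epsilon_n)=0$. By the definition \eqref{eq:defnC} of $C_{v,n}$ this forces the bottom row of $C_{v,n}(\epsilon_n)$ to vanish, and hence the bottom row of $H_{v,n}(\epsilon_n)=C_{v,n}(\epsilon_n)H_{v,n-1}(\epsilon_n)$ is zero, which accounts for the two $\infty$ entries. For the top row, the matrix form \eqref{eq:Hvn} yields the scalar recursion
\[
H^{\bullet}_{v,k+1}(\epsilon_n)=a_v\,H^{\bullet}_{v,k}(\epsilon_n)-\Phi_k(\epsilon_n)\,H^{\bullet}_{v,k-1}(\epsilon_n),\qquad \bullet\in\{\sharp,\flat\},
\]
with initial values $H^{\sharp}_{v,0}=1$, $H^{\flat}_{v,0}=0$, $H^{\sharp}_{v,1}=a_v$, $H^{\flat}_{v,1}=1$. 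Since $\zeta_{p^{n-k}}=(1+\epsilon_n)^{p^k}$, one has $\ord_p(\Phi_k(\epsilon_n))=1/p^{n-k}$ for $1\leq k<n$, whereas $\ord_p(a_v)=r_v\in\{1,\infty\}$. I would then prove by induction on $k$ (running up to $k=n$) that at each step the two summands on the right have strictly different $p$-adic valuations, so that $\ord_p H^{\bullet}_{v,k+1}(\epsilon_n)=\min$ of the two. Once $k\geq 2$, the $\Phi$-term always wins by comparison with $r_v\geq 1$, and unwinding the alternating contributions $1/p^{n-1},1/p^{n-2},\ldots$ yields exactly the geometric sums $\sum 1/p^{2i-1}$ and $\sum 1/p^{2i}$ in the claimed matrix; the parity of $n$ dictates which of the two sums attaches itself to $\sharp$ and which to $\flat$.

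For the second stage, plug the definition into
\[
\colwnu(z)(\epsilon_n)=H_{v,n}^{\sharp}(\epsilon_n)\,\colsw(z)(\epsilon_n)+u\,H_{v,n}^{\flat}(\epsilon_n)\,\colfw(z)(\epsilon_n).
\]
Stage~(I) shows that $\ord_p H_{v,n}^{\sharp}(\epsilon_n)-\ord_p H_{v,n}^{\flat}(\epsilon_n)=r_v-\sum_{i\geq 1}(p-1)/p^{2i}$ up to a vanishing remainder in $n$, and in particular this gap is strictly positive for $n$ odd and strictly negative for $n$ even (and becomes $\pm\infty$ when $r_v=\infty$, in which case the corresponding term is identically zero and the claim is automatic). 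I would therefore define $\delta(w,n)=\flat$ for odd $n$ and $\delta(w,n)=\sharp$ for even $n$, so that the $\delta(w,n)$-term is the one with the strictly smaller $\ord_{\epsilon_n}$ among the two $H$-factors. The hypothesis $\colwnu(z)(\epsilon_n)\neq 0$ rules out total cancellation, and provided the gap in $\ord_{\epsilon_n}$ between the two $H$-factors dominates the possible discrepancy between $\colsw(z)(\epsilon_n)$ and $\colfw(z)(\epsilon_n)$, the $\delta(w,n)$-summand dominates and we read off the claimed equality $\ord_{\epsilon_n}\colwnu(z)(\epsilon_n)=\ord_{\epsilon_n}\!\big(H_{v,n}^{\delta(w,n)}\col_{\delta(w,n),w}(z)(\epsilon_n)\big)$.

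The main obstacle, as suggested above, is stage~(II): establishing that the Coleman evaluations $\col_{\bullet,w}(z)(\epsilon_n)$ cannot conspire to reverse the inequality of $\ord_{\epsilon_n}$ between the two terms. Stage~(I) gives an explicit positive gap in the $H$-valuations which, after rescaling by $p^{n-1}(p-1)$, grows as a definite positive multiple of $p^{n-1}$, so the desired dominance becomes quantitative; nevertheless, carefully matching this gap against the fractional $\ord_p$-values produced by $\col_{\bullet,w}(z)$ at $\epsilon_n$—together with invoking the image description of Proposition~\ref{prop:image} to prevent the two Coleman factors from diverging too wildly—is the technical heart of the argument.
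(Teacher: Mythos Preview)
Your Stage~I outline is essentially correct and matches the standard inductive computation (carried out in Sprung and Lei--Loeffler--Zerbes, to which the paper defers): the vanishing of $\Phi_n(\epsilon_n)$ kills the bottom row, and the recursion $H^{\bullet}_{v,k+1}=a_vH^{\bullet}_{v,k}-\Phi_kH^{\bullet}_{v,k-1}$ together with $\ord_p\Phi_k(\epsilon_n)=1/p^{n-k}$ gives the top-row valuations by checking at each step that the two summands have distinct valuations.

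Your Stage~II, however, has a genuine gap, and your own final paragraph half-recognises it. You propose to take $\delta(w,n)$ to be whichever symbol corresponds to the smaller $H$-valuation (so $\flat$ for odd $n$, $\sharp$ for even $n$), and then argue that the gap in $\ord_{\epsilon_n}$ between the two $H$-factors, being of order $p^{n-1}$, eventually swamps any discrepancy coming from the Coleman evaluations. This fails when $r_v=1$ (the only case where the issue is nontrivial). For fixed $z$ and $n\gg0$ one has $\ord_{\epsilon_n}\col_{\bullet,w}(z)(\epsilon_n)=\mu_\bullet\,\phi(p^n)+\lambda_\bullet$ by Weierstrass preparation, and the $\mu$-contribution is also of order $\phi(p^n)$; it can and does reverse the ordering of the two summands. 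In particular your fixed choice $\delta=\flat$ for odd $n$ is wrong in general: for suitable $\mu_\sharp,\mu_\flat$ one can have $\delta=\sharp$ for all $n$, odd or even. (The paper's Remark~\ref{rk:parity} makes exactly this point: only when $a_v=0$, i.e.\ $r_v=\infty$, is the pattern $\flat/\sharp$ forced.) Invoking Proposition~\ref{prop:image} does not help, since that result describes the \emph{image} of the Coleman pair and gives no upper bound on the valuations of individual evaluations.

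The argument that actually works (and is what the paper imports by citing \cite[Proposition~4.6 and Corollary~4.8]{LLZ17}) is a non-integrality trick. For $n$ odd and large, the difference of the two summands' $\ord_{\epsilon_n}$-valuations equals
\[
(p-1)\,p^{n-1}\Big[(r_v+\mu_\sharp-\mu_\flat)-\tfrac{1}{p+1}\Big]+\text{(bounded)},
\]
and since $r_v+\mu_\sharp-\mu_\flat$ is an integer while $1/(p+1)$ is not, the bracket is never zero. Its sign, for the given $z$, then fixes $\delta(w,n)$ for odd $n$ (and an analogous computation handles even $n$). This is why $\delta(w,n)$ depends on $w$---really on the $\mu$-invariants of $\col_{\bullet,w}(z)$---and on the parity of $n$, not merely on parity.
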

\begin{proof}
This is a special case of \cite[Proposition~4.6 and Corollary~4.8]{LLZ17}\footnote{There is a small typo in the statement of \cite[Proposition~4.6]{LLZ17} when $n$ is odd.} (with $k=2$ and $v$ in loc. cit. is taken to be $r_v$ here).
\end{proof}
\begin{remark}\label{rk:parity}
If $a_v=0$, then  for all $z$,
\[
\delta(w,n)=\begin{cases}
\flat&\text{if $n$ is odd,}\\
\sharp&\text{if $n$ is even.}
\end{cases}
\]
\end{remark}

\subsection{The ordinary case}\label{sec:ord}
We suppose in this section  that $v\in \Sord'$. We have a short exact sequence
\[ 0\lra \widehat{E}_v[p^{\infty}]\lra E[p^{\infty}] \lra \widetilde{E}_v[p^{\infty}]\lra 0, \]
where $\widehat{E}_v$ (resp., $\widetilde{E}_v$) is the formal group (resp., reduced curve) of $E$ at $K_v$. 
{This gives the exact sequence
\[ 0\lra \widehat{E}_v(K_{\infty,v})[p^{\infty}]\lra E(K_{\infty,v})[p^{\infty}] \lra \widetilde{E}_v(k_{\infty,v})[p^{\infty}], \]
where $k_{\infty,v}$ is the residue field of $K_{\infty, v}$. Note that 
$k_{\infty,v}$ is a finite field.}

In what follows, we write $\tT$ and $\wT$ for the $p$-adic Tate module of $\widetilde{E}$ and $\widehat{E}$ respectively.
\begin{lemma}\label{lem:localH2finite}
The modules $H^2_{\Iw}(K_{\infty,v}, \wT)$, $H^2_{\Iw}(K_{\infty,v},  T)$ and $H^2_{\Iw}(K_{\infty,v},  \tT)$ are finite.
\end{lemma}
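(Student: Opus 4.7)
The plan is to apply local Tate duality at the Iwasawa level. For a finitely generated $\Zp$-module $M$ with continuous $G_{K_v}$-action, taking an inverse limit along corestriction of the finite-level pairings $H^2(K_{n,v}, M) \times H^0(K_{n,v}, M^\ast(1) \otimes \Qp/\Zp) \to \Qp/\Zp$ yields a perfect pairing
\[
H^2_{\Iw}(K_{\infty,v}, M) \times H^0(K_{\infty,v}, M^\ast(1) \otimes \Qp/\Zp) \to \Qp/\Zp,
\]
where $M^\ast(1) = \Hom_{\Zp}(M, \Zp(1))$. I will apply this to $M = T, \wT, \tT$ and reduce each finiteness claim to that of a Galois-invariant $p$-power torsion subgroup over $K_{\infty,v}$.

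The Weil pairing makes $T$ self-dual, $T^\ast(1) \cong T$. Since the pairing is alternating and $\tT$ has rank one, the submodule $\tT \subset T$ coincides with its own annihilator, yielding Galois-equivariant isomorphisms $\wT^\ast(1) \cong \tT$ and $\tT^\ast(1) \cong \wT$. Combined with the first step, these identifications exhibit $H^2_{\Iw}(K_{\infty,v}, T)$, $H^2_{\Iw}(K_{\infty,v}, \wT)$ and $H^2_{\Iw}(K_{\infty,v}, \tT)$ as the Pontryagin duals of $E(K_{\infty,v})[p^\infty]$, $\widehat{E}_v(K_{\infty,v})[p^\infty]$ and $\widetilde{E}_v(K_{\infty,v})[p^\infty]$ respectively.

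It remains to show each of these three torsion groups is finite. Since $K_{\infty,v}/K_v$ is totally ramified, the residue field of $K_{\infty,v}$ is still $\mathbb{F}_p$, and so $\widetilde{E}_v(K_{\infty,v})[p^\infty] = \widetilde{E}_v(\mathbb{F}_p)[p^\infty]$ is finite. For the formal part, ordinariness at $v$ gives $\widehat{E}_v[p^\infty] \cong \mu_{p^\infty} \otimes \eta$ for an unramified character $\eta$, so $K_v(\widehat{E}_v[p])$ is a ramified extension of $K_v$ of degree dividing $p-1$, in particular coprime to $p$; since $K_{\infty,v}/K_v$ is pro-$p$, the two extensions are linearly disjoint, forcing $\widehat{E}_v[p]^{G_{K_{\infty,v}}} = 0$ and hence $\widehat{E}_v(K_{\infty,v})[p^\infty] = 0$. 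The middle term $E(K_{\infty,v})[p^\infty]$ is then squeezed by the short exact sequence recalled just above the lemma and is consequently finite.

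The principal technical point is the Cartier-duality identification $\wT^\ast(1) \cong \tT$; once this is in place, the substantive geometric input is merely the essentially trivial observation that the coprime-to-$p$ ramified extension $K_v(\widehat{E}_v[p])$ cannot sit inside the pro-$p$ extension $K_{\infty,v}/K_v$.
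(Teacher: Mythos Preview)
Your proof is correct and follows the same local Tate duality framework as the paper, but the two diverge on the finiteness input. The paper cites Imai's theorem to conclude that $E(K_{\infty,v})[p^\infty]$ is finite, deduces finiteness of $\widehat{E}_v(K_{\infty,v})[p^\infty]$ as a subgroup, and then appeals to duality; you instead handle the filtration pieces directly and \emph{assemble} the finiteness of $E(K_{\infty,v})[p^\infty]$ from them. Your argument is more elementary: the finiteness of $\widetilde{E}_v(K_{\infty,v})[p^\infty]$ is immediate from total ramification, and for the formal part you exploit that the mod-$p$ character $\chi\cdot\eta^{-1}$ is ramified (hence nontrivial) while $K_{\infty,v}/K_v$ is pro-$p$, giving the stronger conclusion $\widehat{E}_v(K_{\infty,v})[p^\infty]=0$. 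This avoids invoking Imai altogether in the ordinary case; the trade-off is that Imai's theorem is reduction-type-agnostic and is used elsewhere in the paper (e.g.\ in the control theorem for $\Sel^0$), so citing it here is natural from the paper's perspective.

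One notational remark: you write ``the submodule $\tT\subset T$''. In the paper's conventions $\tT=T_p\widehat{E}_v$, so this is indeed the submodule of $T$, consistent with your isotropy argument; note however that the displayed sequence $0\to\wT\to T\to\tT\to 0$ just below the lemma appears to have $\wT$ and $\tT$ transposed relative to the stated definitions. Your duality identifications $\wT^\ast(1)\cong\tT$ and $\tT^\ast(1)\cong\wT$ are the correct ones.
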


\begin{proof}
{We first establish the finiteness of $\widehat{E}_v(K_{\infty,v})[p^{\infty}]$, $E(K_{\infty,v})[p^{\infty}]$ and $\widetilde{E}_v(k_{\infty,v})[p^{\infty}]$. Indeed, since $k_{\infty,v}$ is finite, so is $\widetilde{E}_v(k_{\infty,v})[p^{\infty}]$. The main theorem of \cite{Imai} says that $E(K_{\infty,v})[p^{\infty}]$ is finite, which in turn implies that $\widehat{E}(K_{\infty,v})[p^{\infty}]$ is also finite. Now, local duality tells us that $H^2_{\Iw}(K_{\infty,v}, \widetilde{T}) \cong H^0(K_{\infty,v}, \widehat{E}[p^{\infty}])^{\vee}$ and hence
the finiteness of $H^2_{\Iw}(K_{\infty,v}, \widetilde{T})$. The remaining finiteness assertions can be proven similarly.}
\end{proof} 

By \cite[Lemma 17.12]{kato04}, there is an injective map
\[  \col_v:H^1_{\Iw}(K_{\infty,v},\tT) {\lra} \Lambda\]
with finite cokernel. 
On taking Iwasawa cohomology of the short exact sequence
\[ 0\lra \wT\lra T \lra \tT\lra 0,\]
we obtain
\[ 0\lra H^1_{\Iw}(K_{\infty,v},\wT) \lra  H^1_{\Iw}(K_{\infty,v},T) \lra  H^1_{\Iw}(K_{\infty,v},\tT) \lra  H^2_{\Iw}(K_{\infty,v},\wT), \]
where one notes that $H^2_{\Iw}(K_{\infty,v},\wT)$ is finite by Lemma \ref{lem:localH2finite}.
Consequently, the following composition of maps
\[ H^1_{\Iw}(K_{\infty,v},T)  \lra  H^1_{\Iw}(K_{\infty,v},\tT) \stackrel{\mathrm{Col}_v}{\lra} \Lambda\]
 factors through to give an injection
\[  H^1_{/f, \Iw}(K_{\infty,v},T):=\frac{H^1_{\Iw}(K_{\infty,v},T)}{H^1_{\Iw}(K_{\infty,v}, \wT)} \hookrightarrow \Lambda\]
with finite cokernel. By an abuse notation, we shall write $\col_v$ for the composition above.
Note that for  $n\gg0$, we  have a short exact sequence
\begin{equation}\label{eq:B}
     0\lra B \lra H^1_{/f,\Iw}(K_{\infty,v},T)_{\Gamma_n} \stackrel{\col_v}{\lra} \Lambda /(\omega_n)\lra B\lra 0
\end{equation}
for some finite group $B$ that is independent of $n$.

Recall that
\[
H^1_{/f}(K_{n,v},T)=\frac{H^1(K_{n,v},T)}{E(K_{n,v})\otimes\Zp}=\frac{H^1(K_{n,v},T)}{H^1(K_{n,v}, \wT)}.\] We have the following diagram

\[   \xymatrixrowsep{0.25in}
\xymatrixcolsep{0.15in}\entrymodifiers={!! <0pt, .8ex>+} \SelectTips{eu}{}\xymatrix{
    & H^1_{\Iw}(K_{\infty,v}, \wT)_{\Gamma_{n}} \ar[d] \ar[r] &    H^1_{\Iw}(K_{\infty,v}, T)_{\Gamma_{n}}
    \ar[d] \ar[r]^(){} & H^1_{/f,\Iw}(K_{\infty,v}, T)_{\Gamma_{n}} \ar[d] \ar[r] &0\\
    0 \ar[r]^{} & H^1(K_{n,v}, \wT) \ar[r]^{} & H^1(K_{n,v}, T) \ar[r] & H^1_{/f}(K_{n,v}, T)\ar[r] &0 } \]
with exact rows. The first two vertical maps are injective with  cokernels $H^2_{\Iw}(K_{\infty,v}, \tT)$ and $H^2_{\Iw}(K_{\infty,v}, \tT)$ respectively. Taking Lemma \ref{lem:localH2finite} into consideration, it follows that the rightmost vertical map has finite kernel and cokernel which are bounded independently of $n$.

\begin{proposition} \label{kobayashi rank ordinary}
Let $z\in H^1_{\Iw}(K_{\infty,v}, T)$ such that $\col_v(z)\ne 0$ and write $M_n$ for the $\Lambda$-module generated by its image in $H^1_{/f}(K_{v,n}, T)$. When $n\gg0$,  $\nabla\left(\frac{H^1_{/f}(K_{n,v}, T)}{M_n} \right)$ is defined with
\[ \nabla\left(\frac{H^1_{/f}(K_{n,v}, T)}{M_n} \right) = \nabla\left(\frac{\Lambda}{(\omega_n, \col_v(z))} \right) =\ord_{\epsilon_n}\col_v(z)(\epsilon_n).\]
\end{proposition}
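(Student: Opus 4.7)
The plan is to transport the Kobayashi rank computation via the Coleman map $\col_v$ into the Iwasawa algebra $\Lambda$, where Proposition~\ref{prop:nabla}(a) applies directly. Set $f = \col_v(z) \in \Lambda$. Since $f \ne 0$, we have $\Phi_n \nmid f$ for all $n \gg 0$, so Proposition~\ref{prop:nabla}(a) gives
\[ \nabla\!\left(\Lambda/(\omega_n, f)\right) = \ord_{\epsilon_n} f(\epsilon_n). \]
It then remains to show $\nabla\!\left(H^1_{/f}(K_{n,v},T)/M_n\right)$ is defined and equals this quantity for $n\gg 0$.

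Let $N_n$ denote the image of $z$ in $H^1_{/f,\Iw}(K_{\infty,v},T)_{\Gamma_n}$. Two comparisons enter. First, the commutative diagram preceding the proposition, together with the finiteness of $H^2_{\Iw}(K_{\infty,v}, \tT)$ and $H^2_{\Iw}(K_{\infty,v}, T)$ from Lemma~\ref{lem:localH2finite}, shows that the natural map $H^1_{/f,\Iw}(K_{\infty,v},T)_{\Gamma_n} \to H^1_{/f}(K_{n,v},T)$ has finite kernel and cokernel bounded independently of $n$. Second, by \eqref{eq:B}, the Coleman map induces $H^1_{/f,\Iw}(K_{\infty,v},T)_{\Gamma_n} \to \Lambda/(\omega_n)$ with kernel and cokernel both equal to the fixed finite group $B$. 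Quotienting by the submodule generated by (the image of) $z$ and chasing the snake lemma through these comparisons then produces short exact sequences of projective systems relating
\[ \frac{H^1_{/f}(K_{n,v},T)}{M_n}, \qquad \frac{H^1_{/f,\Iw}(K_{\infty,v},T)_{\Gamma_n}}{N_n}, \qquad \frac{\Lambda}{(\omega_n, f)}, \]
whose error terms are projective systems of finite $\Zp$-modules of sizes bounded uniformly in $n$.

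The key remaining step is to show these bounded finite error systems have vanishing Kobayashi rank for $n\gg 0$. Since each error module is a finite $\Lambda$-module, $\omega_n$ eventually acts trivially on it, so the arguments of Lemma~\ref{lemma:NSWfg} and of the proof of Proposition~\ref{prop:equivY} apply: Noetherianness forces the systems to stabilize, and once stable the transition maps coincide with multiplication by $p$, so \cite[Lemma~10.4(ii)]{kobayashi03} yields $\nabla = 0$. Applying Lemma~\ref{lem:kobSES} to the short exact sequences then gives, for $n \gg 0$,
\[ \nabla\!\left(\frac{H^1_{/f}(K_{n,v},T)}{M_n}\right) = \nabla\!\left(\frac{\Lambda}{(\omega_n, f)}\right) = \ord_{\epsilon_n} \col_v(z)(\epsilon_n), \]
which also certifies that the leftmost rank is defined. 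The main obstacle I expect is the snake-lemma bookkeeping in the previous paragraph combined with the stabilization argument here; once those are verified the proof concludes immediately from Proposition~\ref{prop:nabla}(a).
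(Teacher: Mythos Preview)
Your proposal is correct and follows essentially the same route as the paper: both pass through the intermediate object $\left(H^1_{/f,\Iw}(K_{\infty,v},T)/\Lambda\cdot z\right)_{\Gamma_n}$, compare it on one side to $H^1_{/f}(K_{n,v},T)/M_n$ via the descent diagram (with errors controlled by the finite groups from Lemma~\ref{lem:localH2finite}) and on the other side to $\Lambda/(\omega_n,\col_v(z))$ via \eqref{eq:B}, then invoke Proposition~\ref{prop:nabla}(a). If anything, your treatment of the error terms is more explicit than the paper's, which simply asserts that uniformly bounded finite kernels and cokernels force equality of Kobayashi ranks without spelling out the stabilization-plus-multiplication-by-$p$ argument you give.
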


\begin{proof}
Let $M$ denote the image of $\Lambda\cdot z$ in the quotient $H^1_{/f,\Iw}(K_{\infty,v}, T)$. By the discussion just before the proposition, we have a map
\[ \left(\frac{H^1_{/f,\Iw}(K_{\infty,v}, T)}{M}\right)_{\Gamma_n}\lra \frac{H^1_{/f}(K_{n,v}, T)}{M_n}\]
with finite kernel and cokernel whose orders are independent of $n$ for $n\gg 0$. This in turn yields
\[ \nabla\Bigg( \left(\frac{H^1_{/f,\Iw}(K_{\infty,v}, T)}{M}\right)_{\Gamma_n} \Bigg) = \nabla\left(\frac{H^1_{/f}(K_{n,v}, T)}{M_n} \right).\]
Thanks to the short exact sequence \eqref{eq:B}, we have
\[ \nabla\Bigg( \left(\frac{H^1_{/f,\Iw}(K_{\infty,v}, T)}{M}\right)_{\Gamma_n} \Bigg) = \nabla\left(\frac{\Lambda}{(\omega_n, \col_v(z))} \right)\]
for $n\gg0$, resulting in the first equality of the proposition. The second equality follows from Proposition~\ref{prop:nabla}.
\end{proof}

\begin{corollary}\label{cor:nablaord}
Retain the setting of Proposition \ref{kobayashi rank ordinary}. Let $w\in \Sigma_\ord$ be a prime lying above $v$. Let $z'$ be the natural image of $z$ in  $\HIw(F_{\infty,w},T)=\HIw(K_{\infty,v},T)\otimes_{\Zp}\cO_w$. Let $M_n'$ be the $\Lambda_{\cO_w}$-module generated by $z'$ in the quotient $H^1_{/f}(F_{n,w}, T)$. Then for $n\gg0$, $\nabla\left(\frac{H^1_{/f}(F_{n,w}, T)}{M_n'} \right) $ is defined with
\[ \nabla\left(\frac{H^1_{/f}(F_{n,w}, T)}{M_n'} \right) = [F_w: K_v]\cdot \ord_{\epsilon_n}\col_w(z')(\epsilon_n),\]
where $\col_w$ is the Coleman map 
\[
\HIw(F_{\infty,w},T)\rightarrow \Lambda_{\cO_w}
\]
obtained from extending $\col_w$ $\cO_w$-linearly.
\end{corollary}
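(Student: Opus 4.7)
The plan is to mimic the proof of Proposition~\ref{kobayashi rank ordinary} verbatim, tensoring everything in sight with $\cO_w$ over $\Zp$. Since $F_w/K_v$ is unramified by (S1), the ring $\cO_w$ is a free $\Zp$-module of rank $[F_w:K_v]$, in particular flat, so every exact sequence in the ordinary-case discussion of \S\ref{sec:ord} remains exact after $-\otimes_{\Zp}\cO_w$, and every finite $\Zp$-module stays finite (with order inflated by a factor of at most $p^{[F_w:K_v]}$ per element). In particular, applying $-\otimes_{\Zp}\cO_w$ to \eqref{eq:B} and the diagram preceding Proposition~\ref{kobayashi rank ordinary}, and using the stated identification $\HIw(F_{\infty,w},T) = \HIw(K_{\infty,v},T)\otimes_{\Zp}\cO_w$ together with its analogue at each finite level $H^1(F_{n,w},T)=H^1(K_{n,v},T)\otimes_{\Zp}\cO_w$ (valid because $F_w/K_v$ is unramified and Galois), one obtains a short exact sequence
\[
0\lra B\otimes_{\Zp}\cO_w\lra H^1_{/f,\Iw}(F_{\infty,w},T)_{\Gamma_n}\xrightarrow{\col_w}\Lambda_{\cO_w}/(\omega_n)\lra B\otimes_{\Zp}\cO_w\lra 0
\]
for $n\gg0$, with $B$ the same finite group as in \eqref{eq:B}, and a map
\[
\left(\frac{H^1_{/f,\Iw}(F_{\infty,w},T)}{M'}\right)_{\Gamma_n}\lra \frac{H^1_{/f}(F_{n,w},T)}{M_n'}
\]
whose kernel and cokernel are finite of order bounded independently of $n$, where $M'$ denotes the image of $\Lambda_{\cO_w}\cdot z'$ in $H^1_{/f,\Iw}(F_{\infty,w},T)$.

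By Lemma~\ref{lem:kobSES} and \cite[Lemma~10.4(ii)]{kobayashi03}, the bounded-kernel-and-cokernel map above gives
\[
\nabla\!\left(\frac{H^1_{/f}(F_{n,w},T)}{M_n'}\right)=\nabla\!\left(\left(\frac{H^1_{/f,\Iw}(F_{\infty,w},T)}{M'}\right)_{\Gamma_n}\right),
\]
and the four-term exact sequence above, again together with Lemma~\ref{lem:kobSES}, yields
\[
\nabla\!\left(\left(\frac{H^1_{/f,\Iw}(F_{\infty,w},T)}{M'}\right)_{\Gamma_n}\right)=\nabla\!\left(\Lambda_{\cO_w}/(\omega_n,\col_w(z'))\right)
\]
for $n\gg0$. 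Finally, Proposition~\ref{prop:nablawithcoeff} applied with $k=[F_w:K_v]$ evaluates the right-hand side as $[F_w:K_v]\cdot\ord_{\epsilon_n}\col_w(z')(\epsilon_n)$, which completes the argument.

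The only place where something beyond a formal translation of the ordinary-case proof is required is in justifying the base-change identifications $\HIw(F_{\infty,w},T)=\HIw(K_{\infty,v},T)\otimes_{\Zp}\cO_w$ and $H^1(F_{n,w},T)=H^1(K_{n,v},T)\otimes_{\Zp}\cO_w$ used to extend $\col_v$ to $\col_w$ and to transport \eqref{eq:B}; this relies crucially on the hypothesis that $p$ is unramified in $F/\QQ$, so that $F_w/K_v$ is unramified and $\cO_w/\Zp$ is étale. I would expect this to be the only mildly delicate step, but it is already used implicitly elsewhere in the paper (for instance when extending $\colsv,\colfv$ to $\colsw,\colfw$), so no new difficulty arises.
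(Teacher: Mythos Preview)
Your proof is correct and is precisely the unpacking of the paper's one-line argument (``This follows from combining Propositions~\ref{prop:nablawithcoeff} and~\ref{kobayashi rank ordinary}''): you rerun the proof of Proposition~\ref{kobayashi rank ordinary} after base change to $\cO_w$, invoking Proposition~\ref{prop:nablawithcoeff} at the final step to pick up the factor $[F_w:K_v]$. No substantive difference from the paper's intended proof.
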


\begin{proof}
This follows from combining Propositions \ref{prop:nablawithcoeff} and \ref{kobayashi rank ordinary}.
\end{proof}

\section{Proofs of the main results}\label{sec:proofs}

Throughout this section, we assume that $(S1)-(S3)$. Let us write $d=[F:\QQ]$.
For each choice of $\fs=(s_w)_{w\in\Sss}\in\{\sharp,\flat\}^\Sss$, define 
\begin{equation}\label{eq:defnCol}
    \col_\fs:\bigoplus_{w\in\Sigma_p}\HIw(F_{\infty,w},T)\rightarrow \bigoplus_{w\in\Sigma_p}\Lambda_{\cO_w}\cong \Lambda^{\oplus d}
\end{equation}
to be the map given by $\col_{s_w,w}$ for $w\in \Sss$ and $\col_{w}$ for $w\in\Sord$, where $\col_w$ {is defined as in Corollary \ref{cor:nablaord}}.
\begin{lemma}\label{lem:injloc}
The localization map $$\loc_p:\HIw(F_\infty,T)\rightarrow \bigoplus_{w\in \Sigma_p}\HIw(F_{\infty,w},T)$$
is injective.
\end{lemma}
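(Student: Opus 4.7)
The plan is to combine the Iwasawa-theoretic Poitou-Tate exact sequence with hypothesis (S3) (through Proposition~\ref{torsion H2}) and the torsion-freeness of $\HIw(F_\infty,T)$ furnished by Lemma~\ref{H1Iw}.

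First I would invoke Poitou-Tate duality at each finite level $F_n$ for the Galois module $T/p^m$, and take the inverse limit over $n$ and $m$ (which preserves exactness since the intermediate groups are finite) to obtain an exact sequence of the shape
\[
H^2(G_\Sigma(F_\infty),\Ep)^\vee \lra \HIw(F_\infty,T) \stackrel{\loc_\Sigma}{\lra} \bigoplus_{w\in\Sigma}\HIw(F_{\infty,w},T),
\]
where $\loc_\Sigma$ denotes the full localization map at \emph{all} primes of $\Sigma$. In fact this is essentially the same Iwasawa-theoretic reformulation of Poitou-Tate that underlies the proof of Proposition~\ref{torsion H2} (via \cite[Proposition A.3.2]{perrinriou95}), so no new tool is required. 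By Proposition~\ref{torsion H2}, hypothesis (S3) forces $H^2(G_\Sigma(F_\infty),\Ep)=0$, so the displayed sequence already shows that $\loc_\Sigma$ is injective.

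Next, the map $\loc_\Sigma$ factors as $\loc_p$ followed by the localizations at the primes in $\Sigma\setminus\Sigma_p$, so $\ker(\loc_p)$ embeds into $\bigoplus_{w\in\Sigma,\,w\nmid p}\HIw(F_{\infty,w},T)$. For each such prime $w$, the local extension $F_{\infty,w}/F_w$ is unramified (since $F_\infty/F$ is unramified outside $p$), and a standard computation using Tate local duality shows that $\HIw(F_{\infty,w},T)$ is finitely generated over $\Zp$, hence $\Lambda$-torsion. Thus $\ker(\loc_p)$ is a $\Lambda$-torsion submodule of $\HIw(F_\infty,T)$, which by Lemma~\ref{H1Iw} contains no nonzero $\Lambda$-torsion. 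This forces $\ker(\loc_p)=0$.

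The main technical issue I anticipate is purely bookkeeping: carefully deriving the Iwasawa-theoretic Poitou-Tate sequence in precisely the stated form and identifying the connecting map as originating from $H^2(G_\Sigma(F_\infty),\Ep)^\vee$. Once this is in place, the rest of the argument — the vanishing of $H^2(G_\Sigma(F_\infty),\Ep)$ under (S3), the triviality of the away-from-$p$ contributions up to $\Lambda$-torsion, and the appeal to torsion-freeness of $\HIw(F_\infty,T)$ — is entirely routine.
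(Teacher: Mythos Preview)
Your argument is correct, and it takes a genuinely different route from the paper's proof. The paper argues via the Poitou--Tate sequence involving the signed Selmer group: it sets up the exact sequence
\[
\HIw(F_\infty,T)\rightarrow \frac{\bigoplus_{w\in\Sigma_p}\HIw(F_{\infty,w},T)}{\ker\col_{\vec{\sharp}}}\rightarrow \Sel^{\vec{\sharp}}(E/F_\infty)^\vee,
\]
uses Proposition~\ref{prop:image} (and the ordinary analogue) to compute that the middle term has $\Lambda$-rank $d=[F:\QQ]$, uses Proposition~\ref{prop:leopoldt} to see that $\HIw(F_\infty,T)$ also has $\Lambda$-rank $d$, and then invokes the torsionness of $\Sel^{\vec{\sharp}}(E/F_\infty)^\vee$ and Lemma~\ref{H1Iw} to conclude. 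Your approach bypasses the rank-counting and the Coleman-map image computation entirely: you extract the vanishing of $H^2(G_\Sigma(F_\infty),\Ep)$ directly from Proposition~\ref{torsion H2}, use it to get injectivity of the full localization $\loc_\Sigma$, and then dispose of the primes away from $p$ by the standard fact that their Iwasawa cohomology is $\Lambda$-torsion. This is cleaner and more self-contained; the paper's version has the side benefit that the displayed sequence is reused verbatim later (in Proposition~\ref{prop:nicebasis} and the proof of the main theorem). One small quibble: your sentence ``$\loc_\Sigma$ factors as $\loc_p$ followed by the localizations at the primes in $\Sigma\setminus\Sigma_p$'' is not literally correct---$\loc_\Sigma$ is the \emph{direct sum} of $\loc_p$ and the away-from-$p$ localizations, not a composition---but the conclusion you draw (that $\ker\loc_p$ injects into the away-from-$p$ part) is of course valid.
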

\begin{proof}
Recall that (S3) says that $\Sel^{\vec{\sharp}}(E/F_\infty)^\vee$ is $\Lambda$-torsion. By the Poitou-Tate exact sequence as given in \cite[Proposition~A.3.2]{perrinriou95}, we have the following exact sequence
\begin{equation}\label{eq:PTsharp}
    \HIw(F_\infty,T)\rightarrow \frac{\bigoplus_{w\in\Sigma_p}\HIw(F_{\infty,w},T)}{\ker\col_{\vec{\sharp}}}\rightarrow \Sel^{\vec{\sharp}}(E/F_\infty)^\vee.
\end{equation}
By Proposition~\ref{prop:image} and the pseudo-surjectivity of $\col_w$ for $w\in\Sord$ (see the discussion in \S\ref{sec:ord}), 
\[
\frac{\bigoplus_{w\in\Sigma_p}\HIw(F_{\infty,w},T)}{\ker\col_{\vec{\sharp}}}\cong \image\col_{\vec{\sharp}}
\]
is of rank $d$ over $\Lambda$. Given that $\Lambda$-module $\HIw(F_\infty,T)$ is free of rank $d$, the torsionness of   $\Sel^{\vec{\sharp}}(E/F_\infty)^\vee$ implies that the kernel of the first map in \eqref{eq:PTsharp} is $\Lambda$-torsion. But the $\Lambda$-module of $\HIw(F_\infty,T)$ is torsion-free  {by Lemma \ref{H1Iw}. Consequently}, the aforementioned kernel is trivial. This implies that $\ker\loc_p=0$.
\end{proof}

\begin{proposition}\label{prop:nicebasis}
 Let  $R_w$ denote the pre-image of $\HIw(F_{w,\infty},T)$ in $\HIw(F_\infty,T)$ for $w\in\Sigma_p$. There exist a family of elements  $c_w \in R_w$, $w\in \Sigma_p$ so that the quotient 
\[\frac{\HIw(F_\infty,T)}{\bigoplus_{w\in \Sigma_p} \Lambda_{\cO_w} c_w}\]
is a torsion $\Lambda$-module.
\end{proposition}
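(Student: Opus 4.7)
The plan is to leverage Proposition~\ref{prop:leopoldt} (which shows $\HIw(F_\infty,T)$ is a free $\Lambda$-module of rank $d:=[F:\QQ]$) together with the degree identity $d=\sum_{w\in\Sigma_p}[F_w:\Qp]$ (which makes $\bigoplus_w\Lambda_{\cO_w}$ a free $\Lambda$-module of the same rank $d$), and then reduce the proposition to a rank computation. The key input is that, by the Poitou--Tate argument used in Lemma~\ref{lem:injloc} applied to the signed Coleman map $\col_{\vec{\sharp}}$, and combining (S3) with the image descriptions (Proposition~\ref{prop:image} and the discussion in Section~\ref{sec:ord}), the composite
\[
\col_{\vec{\sharp}}\circ\loc_p:\HIw(F_\infty,T)\longrightarrow\bigoplus_{w\in\Sigma_p}\Lambda_{\cO_w}
\]
has $\Lambda$-torsion cokernel; combined with the torsion-freeness of the source (Lemma~\ref{H1Iw}), this map is injective, and its image $M$ has $\Lambda$-rank $d$.

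From this I would deduce $\rank_\Lambda R_w=[F_w:\Qp]$ as follows. Under the identification $\HIw(F_\infty,T)\stackrel{\sim}{\to}M$, the submodule $R_w$ corresponds to $M\cap\Lambda_{\cO_w}$, where $\Lambda_{\cO_w}$ is embedded as the $w$-th coordinate of the target. For any $\alpha\in\Lambda_{\cO_w}$, the torsionness of the cokernel produces some $\mu\in\Lambda\setminus\{0\}$ with $\mu\alpha\in M$, hence $\mu\alpha\in M\cap\Lambda_{\cO_w}$; so after tensoring with $\mathrm{Frac}(\Lambda)$, the two $\Lambda$-modules $M\cap\Lambda_{\cO_w}$ and $\Lambda_{\cO_w}$ become equal, forcing $\rank_\Lambda R_w=[F_w:\Qp]$. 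Moreover, $R_w$ carries a natural $\Lambda_{\cO_w}$-structure: the decomposition group $D_w=\Gal(F_w/K_v)$ sits inside $\Gal(F/K)\cong\Gal(F_\infty/K_\infty)$, which acts on $\HIw(F_\infty,T)$ preserving $R_w$, and the normal integral basis theorem for the unramified extension $F_w/K_v$ identifies $\cO_w$ with $\Zp[D_w]$ as $\Zp[D_w]$-modules. With this structure $R_w$ has $\Lambda_{\cO_w}$-rank $1$, so any $c_w\in R_w$ whose image in $R_w\otimes_{\Lambda_{\cO_w}}\mathrm{Frac}(\Lambda_{\cO_w})$ is nonzero generates a free rank-$1$ $\Lambda_{\cO_w}$-submodule of $R_w$.

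By Lemma~\ref{lem:injloc} the $R_w$'s intersect pairwise trivially, so $\bigoplus_w\Lambda_{\cO_w}c_w$ is a $\Lambda$-submodule of $\HIw(F_\infty,T)$ of $\Lambda$-rank $\sum_w[F_w:\Qp]=d$, matching the rank of the ambient module; the quotient is therefore $\Lambda$-torsion. The main obstacle is the rank determination for each $R_w$: while the full-rank statement for $M$ follows readily from the Poitou--Tate sequence and (S3), pinning down the intersection rank $\rank_\Lambda(M\cap\Lambda_{\cO_w})=[F_w:\Qp]$ requires the sharper torsion-cokernel statement rather than just a rank equality. A secondary subtlety is verifying that the $\Lambda_{\cO_w}$-structure on $R_w$ coming from the decomposition group is compatible with the ambient $\Lambda$-structure on $\HIw(F_\infty,T)$, so that $\Lambda_{\cO_w}c_w$ is intrinsically a submodule of $\HIw(F_\infty,T)$.
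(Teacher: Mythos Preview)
Your overall strategy mirrors the paper's: inject $\HIw(F_\infty,T)$ into $\bigoplus_w\Lambda_{\cO_w}$ via $\col_{\vec{\sharp}}\circ\loc_p$, use the torsion cokernel (from (S3) and the Poitou--Tate sequence) to pin down ranks, then pick $c_w\in R_w$ and count. The paper argues in exactly this way, so the route is the same.

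However, your rank determination for $R_w$ has a gap. You claim that under the isomorphism $\HIw(F_\infty,T)\xrightarrow{\sim}M$, the submodule $R_w$ \emph{corresponds to} $M\cap\Lambda_{\cO_w}$. Only one inclusion holds: if $z\in R_w$ then certainly $(\col_{\vec{\sharp}}\circ\loc_p)(z)\in M\cap\Lambda_{\cO_w}$. But conversely, an element $x\in M\cap\Lambda_{\cO_w}$ is the image of some $z$ satisfying $\col_{\sharp,w'}(\loc_{w'}(z))=0$ for all $w'\neq w$, and since the individual Coleman maps $\col_{\sharp,w'}$ (and $\col_{w'}$ in the ordinary case) have nontrivial kernels, this does \emph{not} force $\loc_{w'}(z)=0$. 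So your argument only yields $\rank_\Lambda R_w\le[F_w:\Qp]$, not equality; the ``$\mu\alpha$'' trick shows $M\cap\Lambda_{\cO_w}$ has full rank, but not that its preimage lies in $R_w$. You have correctly identified this as the main obstacle, but the fix you propose does not close it.

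There is a second issue with your $\Lambda_{\cO_w}$-structure argument: you invoke $\Gal(F/K)\cong\Gal(F_\infty/K_\infty)$ and its decomposition subgroup $D_w$, but the paper does not assume $F/K$ is Galois, so this group need not exist. (You are right that this point is subtle; the paper's own proof is terse here as well.)
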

\begin{proof}
The injectivity of Lemma~\ref{lem:injloc} tells us that we have a direct sum of $\Lambda$-modules $\bigoplus_{w\in\Sigma_p}R_w$ inside $\HIw(F_{w,\infty},T)$. Furthermore, $R_w$ is a $\Lambda_{\cO_w}$-module of rank at most $1$ (since $\HIw(F_{\infty,w},T)$ is of rank $d$ over $\Lambda$). The proof of Lemma~\ref{lem:injloc} tells us that for each $w\in\Sigma_p$, the cokernel of the composition
\[
\HIw(F_\infty,T)\rightarrow \HIw(F_{\infty,w},T)\rightarrow \Lambda_{\cO_w}
\]
is $\Lambda$-torsion (where the second map is given by either $\col_{\flat,w}$ or $\col_w$ depending on whether $w\in\Sss$ or $w\in\Sord$). Thus, the $\Lambda_{\cO_w}$-rank of $R_w$ cannot be zero. Thus, we may pick  $c_w\in R_w$ so that $R_w/\Lambda_{\cO_w}c_w$ is $\Lambda_{\cO_w}$-torsion. The direct sum $\bigoplus_{w\in \Sigma_p} \Lambda_{\cO_w} c_w$ is then a $\Lambda$-module of rank $d$ as required.
\end{proof}

We fix a choice of $c_w$, $w\in\Sigma_p$ as given by Proposition~\ref{prop:nicebasis}. We write $M_\bc$ for the $\Lambda$-module $\bigoplus_{w\in \Sigma_p} \Lambda_{\cO_w} c_w$.
Recall that $\cY'(E/F_n)$ is defined to be $$\coker\left(\HIw(F_\infty,T)_{\Gamma_n}\rightarrow\bigoplus_{w\in\Sigma_p}H^1_{/f}(F_{n,w},T) \right).$$
We  define 
$$\cY''(E/F_n)=\coker\left((M_\bc)_{\Gamma_n}\rightarrow\bigoplus_{w\in\Sigma_p}H^1_{/f}(F_{n,w},T) \right).$$

\begin{proposition}\label{prop:Y''}
 For $n\gg0$, $\nabla_n\cY''(E/F_n)$ is defined. Furthermore, there exist $\vec{\sigma}=(\sigma_w),\vec{\tau}=(\tau_w)\in \{\sharp,\flat\}^\Sss$ such that when $n$ is odd and $n\gg0$, $\nabla_n\cY''(E/F_n)$ equals
\[
S(\vec{\sigma},n)+\nabla\left(\frac{\Lambda^{\oplus d}}{\col_{\vec{\sigma}}(M_{\bc})}\right)_{\Gamma_n},\]
where 
$$S(\vec{\sigma},n)=\phi(p^n)\left(\sum_{w:\sigma_w=\sharp}[F_w:K_v]\left(r_v+\sum_{i=1}^{\frac{n-1}{2}}\frac{1}{p^{2i}}\right)+\sum_{w:\sigma_w=\flat}[F_w:K_v]\sum_{i=1}^{\frac{n-1}{2}}\frac{1}{p^{2i-1}}\right),$$
whereas when $n$ is even and $n\gg0$, $\nabla_n\cY''(E/F_n)$ is given by
\[
T(\vec{\tau},n)+\nabla\left(\frac{\Lambda^{\oplus d}}{\col_{\vec{\tau}}(M_{\bc})}\right)_{\Gamma_n},
\]
where 
$$
T(\vec{\tau},n)=\phi(p^n)\left(\sum_{w:\tau_w=\sharp}[F_w:K_v]\sum_{i=1}^{\frac{n}{2}}\frac{1}{p^{2i-1}}+\sum_{w:\tau_w=\flat}[F_w:K_v]\left(r_v+\sum_{i=1}^{\frac{n}{2}-1}\frac{1}{p^{2i}}\right)\right),
$$
with $\phi$ being the Euler totient function. Here, $\col_{\vec{\sigma}}$ and $\col_{\vec{\tau}}$ are as defined by \eqref{eq:defnCol}.
\end{proposition}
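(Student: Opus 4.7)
The plan is to compute $\nabla \cY''(E/F_n)$ by reducing it to a sum of local Kobayashi ranks, one at each $w\in\Sigma_p$, and then invoking the local computations of Section~\ref{S:local}. Because $M_\bc=\bigoplus_{w\in\Sigma_p}\Lambda_{\cO_w}c_w$ as a $\Lambda$-module, and the composition $M_\bc\to\HIw(F_\infty,T)\to\bigoplus_{w}\HIw(F_{\infty,w},T)$ factors componentwise through $c_w\in R_w$, taking $\Gamma_n$-coinvariants yields a direct sum decomposition
$$\cY''(E/F_n)\;\cong\;\bigoplus_{w\in\Sigma_p}\frac{H^1_{/f}(F_{n,w},T)}{M_{n,w}},$$
where $M_{n,w}$ denotes the $\Lambda_{\cO_w}$-submodule generated by the image of $c_w$. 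By Lemma~\ref{lem:kobSES}, $\nabla\cY''(E/F_n)$ will be defined and equal to the sum of the local $\nabla$'s as soon as each local contribution is defined, which I will check holds for $n\gg0$.

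For $w\in\Sord$, Corollary~\ref{cor:nablaord} evaluates the local contribution as $[F_w:K_v]\cdot\ord_{\epsilon_n}\col_w(c_w)(\epsilon_n)$, and for $w\in\Sss$, Corollary~\ref{cor:nablass} gives $[F_w:K_v]\cdot\ord_{\epsilon_n}\colwnu(c_w)(\epsilon_n)$. In the supersingular case I will then apply Proposition~\ref{prop:evaluateH} to split this further as
$$[F_w:K_v]\bigl(\ord_{\epsilon_n}H_{v,n}^{\delta(w,n)}(\epsilon_n)+\ord_{\epsilon_n}\col_{\delta(w,n),w}(c_w)(\epsilon_n)\bigr),$$
with $\delta(w,n)\in\{\sharp,\flat\}$ depending only on the parity of $n$ and on $r_v$. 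Setting $\sigma_w:=\delta(w,n)$ for any large odd $n$ and $\tau_w:=\delta(w,n)$ for any large even $n$ produces well-defined vectors $\vec\sigma,\vec\tau\in\{\sharp,\flat\}^\Sss$. Using $\ord_{\epsilon_n}(a)=\phi(p^n)\ord_p(a)$ for $a\in\overline{\Qp}^\times$ together with the explicit entries in Proposition~\ref{prop:evaluateH}, the total of the $H_{v,n}^{\delta(w,n)}$ contributions is precisely $S(\vec\sigma,n)$ when $n$ is odd and $T(\vec\tau,n)$ when $n$ is even. The remaining sum $\sum_{w}[F_w:K_v]\ord_{\epsilon_n}\col_{\vec\sigma,w}(c_w)(\epsilon_n)$ is then identified with $\nabla(\Lambda^{\oplus d}/\col_{\vec\sigma}(M_\bc))_{\Gamma_n}$ via the isomorphism $\Lambda^{\oplus d}/\col_{\vec\sigma}(M_\bc)\cong\bigoplus_{w}\Lambda_{\cO_w}/(\col_{\vec\sigma,w}(c_w))$ and a componentwise application of Proposition~\ref{prop:nablawithcoeff} combined with Lemma~\ref{lem:kobSES}. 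The even-$n$ case is strictly analogous, with $\vec\tau$ in place of $\vec\sigma$.

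The main obstacle I foresee is the non-vanishing statement underlying every step: to apply Corollaries~\ref{cor:nablass} and \ref{cor:nablaord} and Proposition~\ref{prop:evaluateH}, I must verify that each Coleman image $\col_w(c_w)$ (ordinary case) and $\col_{\vec\sigma,w}(c_w),\col_{\vec\tau,w}(c_w)$ (supersingular case) is nonzero in $\Lambda_{\cO_w}$, for then it can vanish at $\epsilon_n$ for only finitely many $n$ (as a nonzero element of the Iwasawa algebra is divisible by at most finitely many $\Phi_n$). This should follow from the argument in the proof of Lemma~\ref{lem:injloc}: under (S3), the composition $\HIw(F_\infty,T)\to\HIw(F_{\infty,w},T)\to\Lambda_{\cO_w}$ has $\Lambda$-torsion cokernel, hence is nonzero on the rank-one $\Lambda_{\cO_w}$-module $R_w$; since $c_w$ was chosen in Proposition~\ref{prop:nicebasis} to generate $R_w$ up to $\Lambda_{\cO_w}$-torsion, its Coleman image must be nonzero.
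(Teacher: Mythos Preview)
Your proof is correct and follows essentially the same route as the paper: decompose $\cY''(E/F_n)$ as a direct sum over $w\in\Sigma_p$, apply Corollaries~\ref{cor:nablass} and~\ref{cor:nablaord} termwise, use Proposition~\ref{prop:evaluateH} to split off the contribution $S(\vec\sigma,n)$ (respectively $T(\vec\tau,n)$), and reassemble via Lemma~\ref{lem:kobSES}. One small point to tighten in your non-vanishing argument: the fact that the composite $\HIw(F_\infty,T)\to\Lambda_{\cO_w}$ has torsion cokernel does not by itself force its restriction to $R_w$ to be nonzero, since elements outside $R_w$ can also contribute to the $w$-component; what clinches it is that $\HIw(F_\infty,T)/M_\bc$ is $\Lambda$-torsion (Proposition~\ref{prop:nicebasis}) and that $c_{w'}\in R_{w'}$ maps to zero in $\HIw(F_{\infty,w},T)$ for every $w'\neq w$, so the full-rank image of $M_\bc$ in $\Lambda_{\cO_w}$ is generated by $\col_{\vec\sigma,w}(c_w)$ alone.
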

\begin{proof}
We study the case where $n$ is odd. The proof for the case where $n$ is even is the same.

By definition, we have
\begin{equation}
    \cY''(E/F_n)=\bigoplus_{w\in\Sigma_p}\frac{H^1_{/f}(F_{n,w},T)}{(M_{w})_{\Gamma_n}},
\label{eq:directsum}
\end{equation}
where $M_w$ denotes the image of $\Lambda_{\cO_w} c_w$ in $\HIw(F_{\infty,w},T)$. Let $\pi_w$ be the natural map 
\[
\frac{H^1_{/f}(F_{n,w},T)}{(M_{w})_{\Gamma_n}}\rightarrow \frac{H^1_{/f}(F_{n-1,w},T)}{(M_{w})_{\Gamma_{n-1}}}.
\]
By Corollaries~\ref{cor:nablass} and \ref{cor:nablaord}, $\ker\pi_w$ is finite if and only if $\col_{\delta(w,n),w}(z_w)$ (resp. $\col_w(z_w)$) does not vanish at $\epsilon_n$ for $w\in\Sss$ (resp. $w\in\Sord$).

 Let $\vec{\sigma}=\left(\delta(w,n)\right)_{w\in\Sss}$, where $\delta_{w,n}$ is as given by  Proposition~\ref{prop:evaluateH}, which only depends on the parity of $n$).  On replacing $\fs$ in \eqref{eq:PTsharp} by $\vec{\sigma}$, the torsionness of $\Sel^{\vec{\sigma}}(E/F_\infty)^\vee$ forces $\col_{w,\delta_{w,n}}(z_w)$ (resp. $\col_{w}(z_w)$) to be a non-zero element of $\Lambda_{\cO_w}$  for $w\in\Sss$ (resp. $w\in\Sord$). In particular, when $n\gg0$, it does not vanish at $\epsilon_n$. Thus, $\ker\pi_w$ is finite and $\nabla\frac{H^1_{/f}(F_{n,w},T)}{(M_{w})_{\Gamma_n}}$ is defined. Its value can be calculated using Proposition~\ref{prop:evaluateH}, Corollaries~\ref{cor:nablass} and \ref{cor:nablaord}. To calculate $\nabla\cY''(E/F_n)$, we apply Lemma~\ref{lem:kobSES} to the direct sum in \eqref{eq:directsum}, which results in the formula as claimed.
\end{proof}

\begin{remark}
Suppose that $a_v=0$ for all $v\in \Sssp$, then Remark~\ref{rk:parity} tells us that
$  \vec{\sigma}=\vec{\flat}$ and $\vec{\tau}=\vec{\sharp}$.
Furthermore, the constants $S(\vec{\sigma},n)$ and $T(\vec{\tau},n)$ are given by
\begin{align*}
 S(\vec{\sigma},n)=  S(\vec{\flat},n)&=\phi(p^n)\sum_{w\in\Sss}[F_w:\Qp]\sum_{i=1}^{\frac{n-1}{2}}\frac{1}{p^{2i-1}}=d(p^{n-1}-p^{n-2}+p^{n-3}-\cdots -p),\\
  T(\vec{\tau},n)=T(\vec{\sharp},n)&=\phi(p^n)\sum_{w\in\Sss}[F_w:\Qp]\sum_{i=1}^{\frac{n}{2}}\frac{1}{p^{2i-1}}=d(p^{n-1} -p^{n-2}+p^{n-3}-\cdots -1).
\end{align*}
\end{remark}

\begin{corollary}\label{cor:nablaY'}
For $n\gg0$,  $\nabla\cY'(E/F_n)$ is defined and is given by
\[
\begin{cases}
S(\vec{\sigma},n)+\nabla\left(\coker \col_{\vec{\sigma}}\circ\loc_p\right)_{\Gamma_n}&\text{if $n$ is odd,} \\
T(\vec{\tau},n)+\nabla\left( \coker\col_{\vec{\tau}}\circ\loc_p\right)_{\Gamma_n}&\text{if $n$ is even,}
\end{cases}
\]
\end{corollary}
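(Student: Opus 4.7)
The strategy is to combine Proposition~\ref{prop:Y''} with a comparison between $\cY'(E/F_n)$ and $\cY''(E/F_n)$ via the snake lemma, together with a parallel analysis on the $\Lambda^{\oplus d}$-side. Let $N:=\HIw(F_\infty,T)/M_\bc$, which is $\Lambda$-torsion by Proposition~\ref{prop:nicebasis}. Since both $\HIw(F_\infty,T)$ (by Proposition~\ref{prop:leopoldt}(a)) and $M_\bc\cong\bigoplus_{w}\Lambda_{\cO_w}$ are $\Lambda$-free, taking $\Gamma_n$-coinvariants of $0\to M_\bc\to\HIw(F_\infty,T)\to N\to 0$ produces the four-term sequence
\[
0\to N[\omega_n]\to(M_\bc)_{\Gamma_n}\to\HIw(F_\infty,T)_{\Gamma_n}\to N_{\Gamma_n}\to 0.
\]
Writing $L_n:=\HIw(F_\infty,T)_{\Gamma_n}$, $H_n:=\bigoplus_{w\in\Sigma_p}H^1_{/f}(F_{n,w},T)$, and $\beta_n:L_n\to H_n$, I would then apply the snake lemma to the commutative diagram whose top row is $0\to\image((M_\bc)_{\Gamma_n}\to L_n)\to L_n\to N_{\Gamma_n}\to 0$ and bottom row is $0\to\image((M_\bc)_{\Gamma_n}\to H_n)\to H_n\to\cY''(E/F_n)\to 0$, obtaining a short exact sequence $0\to I_n\to\cY''(E/F_n)\to\cY'(E/F_n)\to 0$, where $I_n$ is a quotient of $N_{\Gamma_n}$ (the image of the induced connecting map). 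By Lemma~\ref{lem:kobSES}, this yields $\nabla\cY''(E/F_n)-\nabla\cY'(E/F_n)=\nabla I_n$.

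Next, on the $\Lambda^{\oplus d}$-side, the map $\col_{\vec\sigma}$ is injective on $M_\bc$ since each $\col_{\sigma_w,w}(c_w)$ is non-zero (a fact used in the proof of Proposition~\ref{prop:Y''}), so $\col_{\vec\sigma}(M_\bc)$ is $\Lambda$-free of rank $d$. The short exact sequence of $\Lambda$-torsion modules
\[
0\to Q\to\Lambda^{\oplus d}/\col_{\vec\sigma}(M_\bc)\to\coker(\col_{\vec\sigma}\circ\loc_p)\to 0,
\]
with $Q:=\col_{\vec\sigma}(\loc_p(\HIw(F_\infty,T)))/\col_{\vec\sigma}(M_\bc)$ a quotient of $N$, gives upon taking $\Gamma_n$-coinvariants a six-term exact sequence in which the three $[\omega_n]$-torsion terms are $\Gamma_n$-invariants of $\Lambda$-torsion modules; by Lemma~\ref{lemma:NSWfg} these stabilize for $n\gg 0$ and hence carry trivial Kobayashi rank. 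Splitting the six-term sequence into short exact pieces and iterating Lemma~\ref{lem:kobSES} then yields, for $n\gg 0$,
\[
\nabla\bigl(\Lambda^{\oplus d}/\col_{\vec\sigma}(M_\bc)\bigr)_{\Gamma_n}-\nabla\bigl(\coker(\col_{\vec\sigma}\circ\loc_p)\bigr)_{\Gamma_n}=\nabla Q_{\Gamma_n}.
\]

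It then remains to establish $\nabla I_n=\nabla Q_{\Gamma_n}$ for $n\gg 0$; this is the main obstacle. Both $I_n$ and $Q_{\Gamma_n}$ are quotients of $N_{\Gamma_n}$, namely by the image of $\ker\beta_n$ and by the image of $\ker(\col_{\vec\sigma}\circ\loc_p)_{\Gamma_n}$ respectively. To compare these two kernels I would exploit the factorization $\col^u_{w,n}=H^\sharp_{v,n}\col_{\sharp,w}+uH^\flat_{v,n}\col_{\flat,w}\pmod{\omega_n}$ for $w\in\Sss$ together with Proposition~\ref{prop:evaluateH}, which precisely controls the order of vanishing of $\col^u_{w,n}$ at $\epsilon_n$ through the dominant Coleman map $\col_{\sigma_w,w}$ with $\sigma_w=\delta(w,n)$; this shows the two kernels differ by submodules of finite, stable order, and hence carry equal Kobayashi rank. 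Combining the resulting identity $\nabla I_n=\nabla Q_{\Gamma_n}$ with the two displayed formulas above and with Proposition~\ref{prop:Y''} gives the stated expression for $\nabla\cY'(E/F_n)$.
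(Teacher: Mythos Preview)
Your overall architecture is sound, but you have created an unnecessary ``main obstacle'' that the paper's proof sidesteps entirely. The key point you are missing is that the two correction terms you call $I_n$ and $Q_{\Gamma_n}$ are \emph{the same module}, namely $N_{\Gamma_n}$ (up to terms with vanishing Kobayashi rank). Indeed, you already observe that $\col_{\vec\sigma}\circ\loc_p$ is injective; since it is a map between free $\Lambda$-modules of the same rank $d$ with torsion cokernel (by the Poitou--Tate sequence and~(S3)), your $Q$ is literally isomorphic to $N$, so $Q_{\Gamma_n}=N_{\Gamma_n}$. On the $\cY''$/$\cY'$ side, the paper simply applies the third isomorphism theorem at level $n$: one has the short exact sequence
\[
0\longrightarrow \frac{\HIw(F_\infty,T)_{\Gamma_n}}{(M_\bc)_{\Gamma_n}}\longrightarrow \cY''(E/F_n)\longrightarrow \cY'(E/F_n)\longrightarrow 0,
\]
and the first term is precisely $N_{\Gamma_n}$ by right-exactness of coinvariants. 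The analogous sequence holds on the $\Lambda^{\oplus d}$-side. Since $N$ is $\Lambda$-torsion, $\nabla N_{\Gamma_n}$ is defined for $n\gg0$ by Proposition~\ref{prop:nabla}(b), and subtracting the two equations eliminates it. No separate comparison of $\ker\beta_n$ with $\ker(\col_{\vec\sigma}\circ\loc_p)_{\Gamma_n}$ is required.

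Your proposed resolution of the obstacle via the factorization $\col^u_{w,n}=H^\sharp_{v,n}\col_{\sharp,w}+uH^\flat_{v,n}\col_{\flat,w}$ and Proposition~\ref{prop:evaluateH} is vague as written: you assert that ``the two kernels differ by submodules of finite, stable order'' but give no mechanism for this. Making that argument rigorous would essentially require reproving the injectivity and rank statements that already give the identification $I_n\cong Q_{\Gamma_n}$ for free. In short, your approach is not wrong, but the extra step you flag as the main difficulty dissolves once you recognize that both corrections are $N_{\Gamma_n}$; the paper's two applications of the third isomorphism theorem make this immediate.
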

\begin{proof}
Again, we only treat the odd case. Consider the following short exact sequences, which are consequences of the third isomorphism theorem:
\[
0 \rightarrow\frac{ \HIw(F_\infty,T)_{\Gamma_n}}{(M_{\bc})_{\Gamma_n}}\rightarrow\left(\frac{\Lambda^{\oplus d}}{\col_{\vec{\sigma}}(M_{\bc})}\right)_{\Gamma_n}\rightarrow\left(\coker\col_{\vec{\sigma}}\circ\loc_p\right)_{\Gamma_n}\rightarrow 0,
\]
\[
0 \rightarrow \frac{ \HIw(F_\infty,T)_{\Gamma_n}}{(M_{\bc})_{\Gamma_n}}\rightarrow\cY''(E/F_n)\rightarrow \cY'(E/F_n)\rightarrow 0.
\]

Since $\HIw(F_\infty,T)/M_{\bc}$ is $\Lambda$-torsion, it follows that $\nabla\frac{ \HIw(F_\infty,T)_{\Gamma_n}}{(M_{\bc})_{\Gamma_n}}$ is defined for $n\gg0$. Thus,  we deduce from Proposition~\ref{prop:Y''} and Lemma~\ref{lem:kobSES} that $\nabla \frac{ \HIw(F_\infty,T)_{\Gamma_n}}{(M_{\bc})_{\Gamma_n}}$ is defined and is given by
\[
\nabla\frac{ \HIw(F_\infty,T)_{\Gamma_n}}{(M_{\bc})_{\Gamma_n}}=\nabla \cY''(E/F_n)- \nabla\cY'(E/F_n).
\]

But Proposition~\ref{prop:Y''} also tells us that $\nabla \left(\frac{\Lambda^{\oplus d}}{\col_{\vec{\sigma}}(M_{\bc})}\right)_{\Gamma_n}$ is defined for $n\gg0$. Thus, $\nabla\coker\col_{\vec{\sigma}}\circ\loc_p$ is defined and satisfies
\[
\nabla \left(\coker\col_{\vec{\sigma}}\circ\loc_p\right)_{\Gamma_n}=\nabla \left(\frac{\Lambda^{\oplus d}}{\col_{\vec{\sigma}}(M_{\bc})}\right)_{\Gamma_n}-\nabla \frac{ \HIw(F_\infty,T)_{\Gamma_n}}{(M_{\bc})_{\Gamma_n}}.
\]
The result now follows from combining these two equations with the formula given in Proposition~\ref{prop:Y''}.
\end{proof}

\begin{defn}
For $0\le n\le \infty$, we define
\[
\Sel^0(E/F_n):=\ker\left(\Sel_{p^\infty}(E/F_n)\rightarrow \prod_{w\in \Sigma_p}H^1(F_{n,w},E[p^\infty])\right).
\]
Equivalently, we have
\[
\Sel^0(E/F_n):=\ker\left(H^1(G_{\Sigma}(F_n),E[p^{\infty}])\rightarrow \prod_{w\in \Sigma(F_n)}H^1(F_{n,w},E[p^\infty])\right),
\] where $\Sigma(F_n)$ denotes the set of primes of $F_n$ above $\Sigma$.
The Pontryagin duals of $\Sel_{p^\infty}(E/F_n)$ and $\Sel^0(E/F_n)$ are denoted by $\cX(E/F_n)$ and $\cX^0(E/F_n)$ respectively.
\end{defn}

\begin{lemma}\label{lem:fineSelmercontrol}
The natural restriction map
\[
\Sel^0(E/F_n) \lra \Sel^0(E/F_{\infty})^{\Gamma_n}
\] has finite kernel and cokernel which are bounded independently of $n$.
\end{lemma}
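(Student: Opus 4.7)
The plan is a standard Mazur-style control theorem argument. I would place the defining left-exact sequence for $\Sel^0(E/F_n)$ over the $\Gamma_n$-invariants of the analogous sequence for $F_\infty$, inducing restriction maps $r_n$, $h_n$, $g_n$ on the Selmer group, the global cohomology, and the product of local cohomologies respectively. The snake lemma applied to the resulting commutative diagram yields an exact sequence
\[
0 \lra \ker r_n \lra \ker h_n \lra \ker g_n \lra \coker r_n,
\]
so it suffices to prove that $h_n$ is an isomorphism and that $\ker g_n$ is finite of order bounded independently of $n$.

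For $h_n$, the inflation-restriction sequence expresses its kernel and cokernel as subquotients of $H^i(\Gamma_n, E(F_\infty)[p^\infty])$ for $i=1,2$. The vanishing of $E(F_\infty)[p^\infty]$ --- already recorded in the proof of Lemma~\ref{H1Iw} as a consequence of hypothesis (S2) and the supersingular torsion vanishing used in Lemma~\ref{lem:surjcor} --- immediately forces $h_n$ to be an isomorphism.

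For $g_n$, one decomposes place by place: inflation-restriction reduces the kernel at a prime $w \in \Sigma(F_n)$ to $H^1(\Gamma_{n,w}, E(F_{\infty,w})[p^\infty])$, after choosing a prime of $F_\infty$ above $w$. Three cases arise. If $w \in \Sss$ the local torsion vanishes by the argument of Lemma~\ref{lem:surjcor}; if $w \in \Sord$ it is finite by Imai's theorem recalled in \S\ref{sec:ord}; if $w \nmid p$ then $F_{\infty,w}/F_w$ is unramified of residue characteristic $\neq p$, so the local torsion is again finite. Since $\Gamma_{n,w}$ is procyclic, $|H^1(\Gamma_{n,w}, M)| \le |M|$ for any finite $\Gamma_{n,w}$-module $M$; combined with the observation that the set of primes of $F_\infty$ above a given $v \in \Sigma$ is finite and stabilizes in $n$, and that $E(F_{\infty,w})[p^\infty]$ is a fixed finite $\Gamma$-module on which $\Gamma_{n,w}$ eventually acts trivially, this yields the desired uniform bound. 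The main technical point to monitor is precisely this uniformity at the ordinary and tame places, but the stabilization just noted makes it automatic.
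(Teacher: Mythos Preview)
Your approach is the same as the paper's: both set up the standard control-theorem diagram, use $E(F_\infty)[p^\infty]=0$ to make the middle restriction an isomorphism, and then bound $\ker g_n$ place by place via inflation--restriction. The paper simply cites \cite[Lemma~3.3]{G99} for the primes $w\nmid p$ and invokes Imai for all primes above $p$, whereas you attempt to treat each case directly.

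There is, however, a genuine gap at the primes $w\nmid p$. Your assertion that ``the local torsion is again finite'' is false in general: if $E$ has split multiplicative reduction at such a $w$ and $\mu_p\subset F_w$, then $F_{\infty,w}$ (the unramified $\Zp$-extension of $F_w$) contains $\mu_{p^\infty}$, and the Tate uniformisation gives $\mu_{p^\infty}\subset E(F_{\infty,w})[p^\infty]$. Consequently the bound $|H^1(\Gamma_{n,w},M)|\le |M|$, which you invoke only for finite $M$, does not apply. The conclusion you need is still true, but it requires the argument behind Greenberg's lemma: $M=E(F_{\infty,w})[p^\infty]$ is cofinitely generated over $\Zp$ and $M^{\Gamma_{n,w}}=E(F_{n,w})[p^\infty]$ is finite, so the maximal divisible subgroup $M_{\mathrm{div}}$ has trivial $\Gamma_{n,w}$-coinvariants, whence $H^1(\Gamma_{n,w},M)\cong (M/M_{\mathrm{div}})_{\Gamma_{n,w}}$ is finite of order bounded by $|M/M_{\mathrm{div}}|$, uniformly in $n$. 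With this correction in place your proof goes through and matches the paper's.
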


\begin{proof}  { In the proof of Lemma \ref{H1Iw}, we have seen that $E(F_{\infty})[p^\infty]=0$. It then follows} that the middle map of the following commutative diagram
\[   \xymatrixrowsep{0.25in}
\xymatrixcolsep{0.15in}\entrymodifiers={!! <0pt, .8ex>+} \SelectTips{eu}{}\xymatrix{
    0 \ar[r]^{} & \Sel^0(E/F_n) \ar[d] \ar[r] &  H^1(G_{\Sigma}(F_n),E[p^{\infty}])
    \ar[d]^{} \ar[r]^(){} & \displaystyle\prod_{w\in \Sigma(F_n)}H^1(F_{n,w},E[p^\infty]) \ar[d]^{}\\
    0 \ar[r]^{} & \Sel^0(E/F_{\infty})^{\Gamma_{n}}\ar[r]^{} &  H^1(G_{\Sigma}(F_\infty),E[p^{\infty}])^{\Gamma_n} \ar[r] &\displaystyle \left(\prod_{w\in \Sigma(F_{\infty})}H^1(F_{\infty,w},E[p^\infty])\right)^{\Gamma_n}}
    \] is an isomorphism  via a Hochschild-Serre spectral sequence argument. Hence it suffices to show that the rightmost map has finite kernel which is bounded independent of $n$. For primes not dividing $p$, this is discussed in \cite[Lemma 3.3]{G99}. It therefore remains to consider the primes above $p$. Let $w$ be such a prime. Then the kernel of the restriction map
    \[ H^1(F_{n,w},E[p^{\infty}])\lra H^1(F_{\infty,w},E[p^{\infty}])^{\Gamma_n}\]
    is  given by $H^1(\Gamma_n, E(F_{\infty,w})[p^{\infty}])$ by the Hochschild-Serre spectral sequence. Since our elliptic curve is assumed to have good reduction at all primes above $p$, the main theorem of \cite{Imai} says that $E(F_{\infty,w})[p^{\infty}]$ is finite.  The finiteness and boundedness of the kernel now follow. This completes the proof of the lemma.
\end{proof} 

\begin{lemma}\label{lem:X0}
For $n\ge0$:
\begin{itemize}
    \item[(a)]We have a short exact sequence
\[
0\rightarrow\cY(E/F_n)\rightarrow \cX(E/F_n)\rightarrow \cX^0(E/F_n)\rightarrow 0.
\]
\item[(b)] For $n\gg0$, $\nabla\cX^0(E/F_n)$ is defined satisfying the equality
\[
\nabla\cX^0(E/F_n)=\nabla\cX^0(E/F_\infty)_{\Gamma_n}.
\]
\end{itemize}
\end{lemma}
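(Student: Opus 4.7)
For part (a), the plan is to dualize the inclusion $\Sel^0(E/F_n)\subseteq\Selp(E/F_n)$ and identify the quotient via Poitou--Tate global duality. Since the local conditions of the two Selmer groups coincide at every prime $w\in\Sigma$ not lying above $p$ (both force vanishing localization), there is an exact sequence
\[
0\lra \Sel^0(E/F_n)\lra \Selp(E/F_n)\stackrel{\loc_p}{\lra}\bigoplus_{w\in\Sigma_p}E(F_{n,w})\otimes\Qp/\Zp,
\]
whose image I denote by $\mathcal{I}_n$. Applying Poitou--Tate global duality, a tuple $c=(c_w)_{w\mid p}$, extended by zero at primes $w\nmid p$, lifts to a class in $\Selp(E/F_n)$ precisely when $c$ is orthogonal to the image of the localization map $H^1(G_\Sigma(F_n),T)\to\bigoplus_{w\in\Sigma_p}H^1_{/f}(F_{n,w},T)$ under local Tate duality. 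Thus $\mathcal{I}_n^\vee\cong\mathcal{Y}(E/F_n)$, and Pontryagin dualizing the four-term sequence yields the desired short exact sequence.

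For (b), I would first observe that $\cX^0(E/F_\infty)$ is a finitely generated torsion $\Lambda$-module: finite generation is inherited from the cofinite generation of $H^1(G_\Sigma(F_\infty),\Ep)$, and $\Lambda$-torsionness follows from the containment $\Sel^0(E/F_\infty)\subseteq \Sel^{\fs}(E/F_\infty)$ together with~(S3). Consequently Proposition~\ref{prop:nabla}(b) guarantees that $\nabla\cX^0(E/F_\infty)_{\Gamma_n}$ is defined for $n\gg 0$.

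Next, Pontryagin dualizing the control map of Lemma~\ref{lem:fineSelmercontrol} produces a natural morphism $\cX^0(E/F_\infty)_{\Gamma_n}\to\cX^0(E/F_n)$ with finite kernel $K_n$ and cokernel $C_n$, both of order bounded independently of $n$. Splitting this map into two short exact sequences and invoking Lemma~\ref{lem:kobSES} reduces the target equality to verifying $\nabla K_n=\nabla C_n=0$ for $n\gg 0$. This follows by the same Noetherian stabilization argument as in the concluding paragraph of the proof of Proposition~\ref{prop:equivY}: by Lemma~\ref{lemma:NSWfg}, the bounded systems $(K_n)$ and $(C_n)$ stabilize, their transition maps eventually become multiplication by $p$, and \cite[Lemma~10.4(ii)]{kobayashi03} then delivers the vanishing.

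The principal technical point is the identification in (a). One inclusion $\mathcal{I}_n\subseteq(\text{image})^{\perp}$ is immediate from the compatibility of global and local Tate pairings, but the reverse inclusion hinges on the exactness of the Poitou--Tate nine-term sequence at the relevant local term; once this is carefully invoked along the lines of the proof of Proposition~\ref{torsion H2}, the remainder of the argument for (b) is essentially bookkeeping parallel to Proposition~\ref{prop:equivY}.
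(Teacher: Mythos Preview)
Your approach for both parts matches the paper's: Poitou--Tate for (a), and for (b) the torsionness of $\cX^0(E/F_\infty)$ (the paper obtains it as a Poitou--Tate cokernel rather than via the containment $\Sel^0\subseteq\Sel^{\fs}$, but either works) followed by the control theorem of Lemma~\ref{lem:fineSelmercontrol}. The paper is in fact terser than you are, simply referring to \cite[Proposition~10.6(ii)]{kobayashi03} for the passage from ``bounded kernel and cokernel of the control map'' to ``equal $\nabla$''.

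There is one imprecision in your added detail for (b). Lemma~\ref{lemma:NSWfg} concerns systems of the form $M^{\Gamma_n}$ with transition maps $1+\gamma_n+\cdots+\gamma_n^{p-1}$, for a fixed finitely generated $\Lambda$-module $M$; the kernels and cokernels $(K_n)$, $(C_n)$ of the dualized control map are not of this shape, so that lemma does not apply to them directly, and ``bounded order'' alone does not force $\nabla=0$. In fact the proof of Lemma~\ref{lem:fineSelmercontrol} already shows that the restriction on $\Sel^0$ is injective (the middle vertical map there is an isomorphism), so $C_n=0$ identically. For $K_n$ one must trace back to the local obstruction groups $H^1(\Gamma_n,E(F_{\infty,w})[p^\infty])$ and argue their eventual constancy directly; this is the content of the Kobayashi reference the paper invokes.
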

\begin{proof}
Part (a) is a consequence of the Poitou-Tate exact sequence. See  \cite[(10.35)]{kobayashi03}.

Again by the Poitou-Tate exact sequence, the cokernel of the last map in \eqref{eq:PTsharp} is isomorphic to $\cX^0(E/F_\infty)$. This implies that $\cX^0(E/F_\infty)$ is $\Lambda$-torsion.  By Lemma \ref{lem:fineSelmercontrol}, the kernel and cokernel of the natural map
\[
\cX^0(E/F_\infty)_{\Gamma_n}\rightarrow \cX^0(E/F_n)
\]
are finite and bounded independent of $n$. Part (b) now follows from {combining the latter observation with} Proposition~\ref{prop:nabla}(b) (see \cite[Proposition 10.6ii)]{kobayashi03} for the proof when $F=\QQ$).
\end{proof}

We are now ready to prove the main theorems of the paper.

\begin{theorem}
Under hypotheses (S1)-(S3), we have
\begin{itemize}
    \item[(A)] $\rank_{\ZZ}E(F_n)$ is bounded independently of $n$;
    \item[(B)] Suppose that $\sha_p(E/F_n)$ is finite for all $n\ge0$. Let $r_\infty=\lim_{n\rightarrow\infty}\rank_\ZZ E(F_n)$. Then, for $n\gg0$, we have
    $$\nabla_n\sha_p(E/F_n)=
    \begin{cases}
    S(\vec{\sigma},n)+\phi(p^n)\mu_{\vec{\sigma}}+\lambda_{\vec{\sigma}}-r_\infty&\text{if $n$ is odd,}\\
    T(\vec{\tau},n)+\phi(p^n)\mu_{\vec{\tau}}+\lambda_{\vec{\tau}}-r_\infty&\text{if $n$ is even.}
    \end{cases}
    $$
\end{itemize}
\end{theorem}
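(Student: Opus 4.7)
Part~(A) follows from a rank-bound argument. By Lemma~\ref{lem:X0}(a) and Lemma~\ref{lem:kobSES}, $\nabla\cX(E/F_n) = \nabla\cY(E/F_n) + \nabla\cX^0(E/F_n)$ whenever two of the three ranks are defined. Proposition~\ref{prop:equivY}, Corollary~\ref{cor:nablaY'}, and Proposition~\ref{prop:nabla}(b) applied to the $\Lambda$-torsion module $\coker(\col_{\vec{\sigma}}\circ\loc_p)$ (which is torsion by (S3) and the Poitou-Tate sequence used in the proof of Lemma~\ref{lem:X0}(b)) show that $\nabla\cY(E/F_n)$ is defined for $n \gg 0$. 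Lemma~\ref{lem:X0}(b) does the same for $\nabla\cX^0(E/F_n)$. Hence $\nabla\cX(E/F_n)$ is defined, which forces $\rank_{\Zp}\cX(E/F_n)$ to be bounded independent of $n$ by the observation made right after the definition of Kobayashi ranks in Section~\ref{S:kob}. Since the Kummer map yields $\rank_{\ZZ}E(F_n) \le \rank_{\Zp}\cX(E/F_n)$, part~(A) follows.

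For part~(B), I would compute $\nabla\cX(E/F_n)$ in two ways and equate the results. On the one hand, the ingredients above combined with Proposition~\ref{prop:nabla}(b) applied to the torsion $\Lambda$-modules $\coker(\col_{\vec{\sigma}}\circ\loc_p)$ and $\cX^0(E/F_\infty)$ give, for $n$ odd and $n \gg 0$,
\[
\nabla\cX(E/F_n) = S(\vec{\sigma},n) + \phi(p^n)(\mu_1 + \mu_2) + \lambda_1 + \lambda_2,
\]
where $(\mu_i, \lambda_i)$ are the Iwasawa invariants of $\coker(\col_{\vec{\sigma}}\circ\loc_p)$ and $\cX^0(E/F_\infty)$, respectively. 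To identify this with $S(\vec{\sigma},n) + \phi(p^n)\mu_{\vec{\sigma}} + \lambda_{\vec{\sigma}}$, I would invoke the Poitou-Tate short exact sequence
\[
0 \to \coker(\col_{\vec{\sigma}}\circ\loc_p) \to \Sel^{\vec{\sigma}}(E/F_\infty)^\vee \to \cX^0(E/F_\infty) \to 0
\]
underlying the proof of Lemma~\ref{lem:X0}(b); this is legitimate because $\Lambda^{\oplus d}/\image\col_{\vec{\sigma}}$ is finite (and hence has trivial $\mu$- and $\lambda$-invariants) by Proposition~\ref{prop:image} together with the analogous discussion in Section~\ref{sec:ord} at ordinary primes. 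Multiplicativity of characteristic ideals in this short exact sequence then yields $\mu_1 + \mu_2 = \mu_{\vec{\sigma}}$ and $\lambda_1 + \lambda_2 = \lambda_{\vec{\sigma}}$. The case of even $n$ is analogous with $\vec{\tau}$ and $T(\vec{\tau},n)$.

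On the other hand, under the finiteness of $\sha_p(E/F_n)$, the Mordell-Weil theorem dualizes to the short exact sequence $0 \to \sha_p(E/F_n)^\vee \to \cX(E/F_n) \to \Zp^{r_n} \to 0$. Applying Lemma~\ref{lem:kobSES} with $r_n = r_\infty$ for $n \gg 0$ (by part~(A)) gives
\[
\nabla\cX(E/F_n) = \bigl(e(\sha_p(E/F_n)) - e(\sha_p(E/F_{n-1}))\bigr) + \nabla\Zp^{r_\infty}.
\]
A short control argument using $E(F_\infty)[p^\infty] = 0$ (noted in the proof of Lemma~\ref{H1Iw}) shows that the induced transition on the free quotient becomes an isomorphism for $n \gg 0$, whence $\nabla\Zp^{r_\infty} = r_\infty$. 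Equating the two expressions for $\nabla\cX(E/F_n)$ then yields the formula of the theorem. The main obstacle is this free-part computation — one must show that $E(F_n)/E(F_{n-1})$ has trivial $p$-primary part for $n \gg 0$ — while the remaining ingredients follow relatively directly from the tools assembled in Sections~\ref{S:signed}--\ref{S:local}.
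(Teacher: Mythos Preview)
Your proposal is correct and follows essentially the same architecture as the paper: decompose $\nabla\cX(E/F_n)$ via Lemma~\ref{lem:X0}(a), compute $\nabla\cY(E/F_n)$ through Proposition~\ref{prop:equivY} and Corollary~\ref{cor:nablaY'}, compute $\nabla\cX^0(E/F_n)$ through Lemma~\ref{lem:X0}(b), and then subtract $r_\infty$ coming from the Mordell--Weil part of~\eqref{eq:tautology}.

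There is one genuine, if minor, difference in execution. To identify $\nabla(\coker\col_{\vec\sigma}\circ\loc_p)_{\Gamma_n}+\nabla\cX^0(E/F_\infty)_{\Gamma_n}$ with $\phi(p^n)\mu_{\vec\sigma}+\lambda_{\vec\sigma}$, the paper takes $\Gamma_n$-coinvariants of the Poitou--Tate sequence, obtains a six-term exact sequence, and invokes Lemma~\ref{lemma:NSWfg} to kill the $\nabla$ of the three $H_1(\Gamma_n,-)$ terms before applying Lemma~\ref{lem:kobSES}. You instead stay at the $\Lambda$-level and use multiplicativity of characteristic ideals in the short exact sequence to get $\mu_1+\mu_2=\mu_{\vec\sigma}$ and $\lambda_1+\lambda_2=\lambda_{\vec\sigma}$, then apply Proposition~\ref{prop:nabla}(b) to each piece separately. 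Your route is slightly more direct and avoids Lemma~\ref{lemma:NSWfg} altogether; the paper's route has the virtue of never needing to separately compare $\coker(\col_{\vec\sigma}\circ\loc_p)$ with the actual Poitou--Tate kernel $\image\col_{\vec\sigma}/\image(\col_{\vec\sigma}\circ\loc_p)$ (though in fact both the paper and you conflate these---harmlessly, since their difference is finite).

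On your flagged ``main obstacle'': you are right that $\nabla\big((E(F_n)\otimes\Qp/\Zp)^\vee\big)=r_\infty$ requires $(E(F_n)/E(F_{n-1}))[p^\infty]=0$, and the paper simply asserts this. Your instinct that $E(F_\infty)[p^\infty]=0$ suffices is correct, and the argument is short: if $Q\in E(F_n)$ has $pQ\in E(F_{n-1})$, then for any $\sigma\in\Gal(F_n/F_{n-1})$ one has $p(\sigma Q-Q)=0$, so $\sigma Q=Q$ by $E(F_n)[p]=0$, whence $Q\in E(F_{n-1})$. So this holds for all $n$, not just $n\gg0$, and is not a real obstacle.
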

\begin{proof}
Corollary~\ref{cor:nablaY'} tells us that $\rank_{\Zp}\cY'(E/F_n)$ is bounded independently of $n$. Thus, the same is true for $\cY(E/F_n)$ thanks to Proposition~\ref{prop:equivY}. { Also,  $\rank_{\Zp}\cX^0(E/F_n)$ is bounded independently of $n$ by Lemma \ref{lem:X0}(b). Thus, by the short exact sequence in Lemma~\ref{lem:X0}(a), we have} that $\rank_{\Zp}\cX(E/F_n)$ is bounded independently of $n$. Hence, part (A) now follows from the well-known exact sequence
\begin{equation}\label{eq:tautology}
  0\rightarrow E(F_n)\otimes\Qp/\Zp\rightarrow \Sel_{p^\infty}(E/F_n)\rightarrow \sha_p(E/F_n)\rightarrow 0  .
\end{equation}

By Lemma~\ref{lem:kobSES}, the short exact sequence \eqref{eq:tautology} implies that
\[
\nabla\sha_p(E/F_n)=\nabla \cX(E/F_n)-\nabla E(F_n)\otimes\Zp.
\]
Part (A) tells us that $\nabla E(F_n)\otimes\Zp=r_\infty$ for $n\gg0$. It remains to calculate $\nabla\cX(E/F_n)$.

Let $\fs\in \{\sharp,\flat\}^\Sss$. On replacing $\vec{\sharp}$ in the Poitou-Tate exact sequence \eqref{eq:PTsharp} by $\fs$, we have the exact sequence
\[
0\rightarrow\coker \col_{\fs}\circ\loc_p\rightarrow \Sel^\fs(E/F_\infty)^\vee\rightarrow\cX^0(E/F_\infty)\rightarrow 0.
\]
Taking $\Gamma_n$-invariant, we obtain a six terms exact sequence
\[0 \rightarrow H_1\big(\Gamma_n, \coker \col_{\fs}\circ\loc_p\big)\rightarrow H_1\left(\Gamma_n, \Sel^\fs(E/F_\infty)^\vee\right)
\rightarrow H_1(\Gamma_n,\cX^0(E/F_\infty))\rightarrow \left(\coker \col_{\fs}\circ\loc_p\right)_{\Gamma_n} \] \[\rightarrow \Sel^\fs(E/F_\infty)^\vee_{\Gamma_n}\rightarrow\cX^0(E/F_\infty)_{\Gamma_n}\rightarrow 0.
\]
 Note that $H_1(\Gamma_n,-)= (-)^{\Gamma_n}$ and one can easily verify that the transition maps on these terms are given by multiplication by $1+\gamma_n+\cdots +\gamma_n^{p-1}$. We may therefore apply Lemma \ref{lemma:NSWfg} to conclude that the Kobayashi ranks of the leftmost three terms vanish for $n\gg 0$. Lemma \ref{lem:kobSES} thus implies the equality 
\[
\nabla \cX^0(E/F_\infty)_{\Gamma_n}+ \nabla \left(\coker \col_{\fs}\circ\loc_p\right)_{\Gamma_n}=\nabla\Sel^\fs(E/F_\infty)^\vee_{\Gamma_n}
\]
for $n\gg 0$.
 Thus, on applying Proposition~\ref{prop:nabla}(b) to the torsion $\Lambda$-module $\Sel^\fs(E/F_\infty)^\vee$, we obtain
\[
\nabla \cX^0(E/F_\infty)_{\Gamma_n}=\nabla\left(\coker \col_{\fs}\circ\loc_p\right)_{\Gamma_n}-\phi(p^n)\mu_{\fs}-\lambda_{\fs}.
\]
Combining this with Lemma~\ref{lem:X0}, Proposition~\ref{prop:equivY} and Corollary~\ref{cor:nablaY'}, we deduce that
\[
\nabla\cX(E/F_n)= \begin{cases}
    S(\vec{\sigma},n)+\phi(p^n)\mu_{\vec{\sigma}}+\lambda_{\vec{\sigma}}&\text{if $n$ is odd,}\\
    T(\vec{\tau},n)+\phi(p^n)\mu_{\vec{\tau}}+\lambda_{\vec{\tau}}&\text{if $n$ is even.}
    \end{cases}
\]
Hence the result.
\end{proof}

\bibliographystyle{amsalpha}
\bibliography{references}

\end{document}